\def\hpq0{h^{p,q}_{\leq 0}}
\def\Hpq0{\H_{\leq 0}^{p,q}}
\def\dbar{\bar\partial}
\def\ddbar{\partial\dbar}
\def\R{{\mathbb R}}
\def\C{{\mathbb C}}
\def\M{{\mathcal M}}
\def\F{{\mathcal F}}
\def\D{{\mathcal D}}
\def\h{{\mathcal{H}}}
\def\e{{\mathcal E}}
\def\Re{{\rm Re\,  }}
\def\Im{{\rm Im\,  }}
\numberwithin{equation}{section}
\numberwithin{figure}{section}
\theoremstyle{plain}
\newtheorem{thm}{\protect\theoremname}[section]
  \theoremstyle{plain}
  \newtheorem{cor}[thm]{\protect\corollaryname}
  \theoremstyle{plain}
  \newtheorem{prop}[thm]{\protect\propositionname}
  \theoremstyle{plain}
  \newtheorem{lem}[thm]{\protect\lemmaname}
  \theoremstyle{remark}
\def\makebbb#1{
    \expandafter\gdef\csname#1\endcsname{
        \ensuremath{\Bbb{#1}}}
}\makebbb{R}\makebbb{N}\makebbb{Z}\makebbb{C}\makebbb{H}\makebbb{E}\makebbb{H}\makebbb{P}\makebbb{B}\makebbb{Q}\makebbb{E}
  \providecommand{\corollaryname}{Corollary}
  \providecommand{\lemmaname}{Lemma}
  \providecommand{\propositionname}{Proposition}
  \providecommand{\remarkname}{Remark}
\providecommand{\theoremname}{Theorem}
\begin{document}

\title{Convexity of the K-energy on the space of K\"ahler metrics and uniqueness of extremal metrics}

\author{Robert J. Berman, Bo Berndtsson}

\curraddr{Department of Mathemical Sciences, Chalmers University of Technology
and University of Gothenburg. SE-412 96 Gothenburg, Sweden}

\email{robertb@chalmers.se, bob@chalmers.se}
\begin{abstract}
We establish the convexity of Mabuchi's K-energy functional along
weak geodesics in the space of K\"ahler potentials on a compact K\"ahler
manifold, thus confirming a conjecture of Chen and give some applications
in K\"ahler geometry, including a proof of the uniqueness of constant
scalar curvature metrics (or more generally extremal metrics) modulo
automorphisms. The key ingredient is a new local positivity property
of weak solutions to the homogenuous Monge-Amp\`ere equation on a product
domain, whose proof uses plurisubharmonic variation of  Bergman kernels. 

\tableofcontents{}
\end{abstract}
\maketitle

\section{\label{sec:Introduction}Introduction}

Let $X$ be an $n-$dimensional compact complex manifold equipped
with a K\"ahler form $\omega_{0}.$ In the seminal work of Calabi \cite{ca,ca-2}
the problem of finding a canonical K\"ahler metric in the corresponding
cohomology class $[\omega_{0}]\in H^{2}(X,\R)$ was proposed; in particular
a metric with constant scalar curvature. As later shown by Mabuchi
\cite{mab0} such metrics are the critical points of a certain functional
on the space of K\"ahler metrics in $[\omega_{0}]$ called the \emph{K-energy}
or the \emph{Mabuchi functional, }which we will denote by $\mathcal{M},$
defined as follows. First recall that the space of all K\"ahler metrics
in $[\omega]$ may be identified with the space $\mathcal{H}(X,\omega)$
of all K\"ahler potentials, modulo constants, i.e. the space of all
functions $u$ on $X$ such that 
\[
\omega_{u}:=\omega+dd^{c}u,\,\,\,\,\,\,(dd^{c}:=\frac{i}{2\pi}\partial\bar{\partial})
\]
 is positive, i.e. defines a K\"ahler form on $X.$ The space $\mathcal{H}(X,\omega)$
admits a natural Riemannian metric $g$ (of non-positive sectional
curvature) that we will refer to as the \emph{Mabuchi metric \cite{mab-1},}
where the squared norm of a tangent vector $v\in C^{\infty}(X)$ at
$u$ is defined by 
\begin{equation}
g_{|u}(v,v):=\int_{X}v^{2}\omega_{u}^{n}\label{eq:mab metric intro}
\end{equation}
Now the Mabuchi functional $\mathcal{M}$ on the infinite dimensional
Riemannian manifold $\mathcal{H}(X,\omega)$ is uniquely defined,
modulo an additive constant, by the property that is gradient is the
normalized scalar curvature of the corresponding K\"ahler metric: 
\begin{equation}
\nabla\mathcal{M}_{|u}:=-(R_{\omega_{u}}-\bar{R}),\label{eq:def of mab as gradient intro}
\end{equation}
 where $\bar{R}$ denotes the average scalar curvatures which, for
cohomology reasons, is a topological invariant. The geometric role
of the Mabuchi functional was elucidated by Donaldson \cite{do00}
who showed that - from a dual point of view - it can be identified
with the Kempf-Ness ``norm-functional'' for the natural action of
the group of all Hamiltonian diffeomorphisms on the space of all complex
structures on $X$ compatible with the symplectic form $\omega_{0}.$
This interpretation also provides a direct link between the Mabuchi
functional and the notion of stability in Geometric Invariant Theory
(GIT), which in the case when the K\"ahler class in question is integral,
i.e. equal to the first Chern class of an ample line bundle $L\rightarrow X,$
has been made precise in the seminal Yau-Tian-Donaldson conjecture
saying that $c_{1}(L)$ contains a K\"ahler metric with constant scalar
curvature if and only if the polarized manifold $(X,L)$ is K-stable
\cite{yau2,ti1,d0}.

\subsection{Statement of the main results}

As shown by Mabuchi \cite{mab0,mab-1} the functional $\mathcal{M}$
is convex along geodesics $u_{t}$ in the Riemannian manifold $\mathcal{H}(X,\omega).$
Unfortunately, given $u_{0}$ and $u_{1}$ in $\mathcal{H}$ there
may be no geodesic $u_{t}$ connecting them (see \cite{l-v,dar-l}
for recent counterexamples). Still by a result of Chen \cite{c0},
with complements due to Blocki \cite{bl}, there always exists a (unique)
\emph{weak }geodesic $u_{t}$ connecting $u_{0}$ and $u_{1}$ defined
as follows. First recall that, by an important observation of Semmes
\cite{se} and Donaldson \cite{do00}, after a complexification of
the variable $t,$ the geodesic equation for $u_{t}$ on $X\times[0,1]$
may be written as the following complex Monge-Amp\`ere equation on a
domain $M:=X\times D$ in $X\times\C$ for the function $U(x,t):=u_{t}(x):$
\begin{equation}
(\pi^{*}\omega+dd^{c}U)^{n\text{+1}}=0,\label{eq:ma eq for geod intro}
\end{equation}
As shown in \cite{c0,bl} for any smoothly bounded domain $D$ in
$\C$ the corresponding boundary value problem on $M$ admits a unique
solution $U$ such $\pi^{*}\omega+dd^{c}U$ is a positive current
with coefficients in $L^{\infty},$ satisfying the equation \ref{eq:ma eq for geod intro}
almost everywhere. In particular, when $D$ is an annulus in $\C$
this construction gives rise to the notion of a weak geodesic curve
$u_{t}$ in the extended space $\mathcal{H}_{1,1}$ of all functions
$u$ such that $\omega_{u}$ is a positive current with coefficients
in $L^{\infty}.$ Moreover, even if the original defining property
(formula \ref{eq:def of mab as gradient intro}) of the Mabuchi functional
requires that $\omega_{u}$ be positive and $C^{2}-$smooth (and in
particular that $u$ be $C^{4}-$smooth) Chen went on to show \cite{c1}
that the Mabuchi functional admits an explicit formula which is well-defined
along a weak geodesic ray $u_{t}$ as above. (This formula was also independently obtained by Tian, see \cite{ti2}.)  Indeed, 

\begin{equation}
\mathcal{M}(u)=\mathscr{E}(u)+\int_{X}\log(\frac{\omega_{u}^{n}}{\omega_{0}^{n}})\omega_{u}^{n},\label{eq:formula for mab intro}
\end{equation}
 where the first term $\mathscr{E}(u)$ is an explicit energy type
expression involving the integral over $X$ of a mixed Monge-Amp\`ere
expression of the form $u\omega_{u}^{j}\wedge\theta_{j}^{n-j}$ for
$j\in[1,n],$ where $\theta_{j}$ are explicit smooth forms depending
on $\omega_{0}$. The second term is the classical entropy of the
measure $\omega_{u}^{n}$ relative to the reference volume form $\omega_{0}^{n}.$
As a consequence $\mathcal{M}$ is naturally defined and finite on
the space $\mathcal{H}_{1,1},$ where the weak geodesics live. It
has been conjectured by Chen that $\mathcal{M}(\phi_{t})$ is convex
along any weak geodesic as above \cite{c1} (the case when $c_{1}(X)\leq0$
was settled by Chen). Our main result confirms this conjecture: 
\begin{thm}
\label{thm:main intro}For any K\"ahler class $[\omega]$ the Mabuchi
functional $\mathcal{M}$ is convex along the weak geodesic $u_{t}$
connecting any two points $u_{0}$ and $u_{1}$ in the space $\mathcal{H}$
of $\omega-$K\"ahler potentials.
\end{thm}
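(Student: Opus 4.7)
The plan is to decompose the Mabuchi functional according to Chen's formula (\ref{eq:formula for mab intro}) as $\mathcal{M}(u) = \mathscr{E}(u) + H(u)$, where $H(u) := \int_X \log(\omega_u^n/\omega_0^n)\,\omega_u^n$ is the entropy part and $\mathscr{E}$ collects the mixed pluripotential-energy terms, and then to prove separately that each summand is convex (equivalently, subharmonic in the complexified variable $\tau$) along the weak geodesic $u_t$.

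For the energy piece $\mathscr{E}$, I would lift the weak geodesic to a function $U(x,\tau) := u_{\Re\tau}(x)$ on $M := X \times D$ (with $D$ a strip/annulus in $\mathbb{C}$) and use that $\Omega := \pi^*\omega + dd^c U$ is a positive $(1,1)$-current satisfying $\Omega^{n+1}=0$. Each term of $\mathscr{E}$ has the form $\int_X u\,\omega_u^j \wedge \theta_{n-j}$ and lifts to the fiber integral of a component of $\Omega^j \wedge \pi^*\theta_{n-j}$. Integration by parts on $M$ combined with $\Omega^{n+1}=0$ and the positivity of the closed currents supplied by Bedford--Taylor/Demailly pluripotential calculus applied to $U$ then yields subharmonicity in $\tau$ (and affinity for the cohomologically invariant components). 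This part essentially reprises the classical argument for the energy functional, adapted to the $L^{\infty}$ setting.

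For the entropy $H$, which is the heart of the theorem, the strategy is to realize $H$ as a limit of Bergman-kernel functionals and invoke Berndtsson's theorem on plurisubharmonic variation of Bergman kernels. Concretely, using an auxiliary positive line bundle (with a canonical twist in the non-integral case), I would introduce functionals $L_k(u)$ built from $-\tfrac{1}{k}\log \det$ of the Gram matrix of holomorphic sections of $kL$ with respect to the weighted norm $e^{-ku}\omega_0^n$. Applied to the plurisubharmonic family $U$ on $M$, Berndtsson's theorem yields that each $L_k(u_t)$ is subharmonic in $\tau$. A Tian--Yau--Zelditch-type asymptotic expansion identifies $\lim_{k\to\infty} L_k(u)$, modulo linear terms in $u$ whose convexity along $u_t$ is elementary, with the entropy $H(u)$, so that subharmonicity of $H(u_t)$ follows in the limit.

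The main obstacle is that along the weak geodesic the weight $U$ is only bounded, so Berndtsson's positivity result, classically stated for smooth strictly plurisubharmonic weights, does not apply directly. This is precisely where the new local positivity property for weak solutions of the HMA announced in the abstract is needed: one must prove directly that for such low-regularity $U$ on $M$ the fiberwise $L^2$ norms and associated Bergman kernels still depend plurisubharmonically on $\tau$. Once this is secured, the passage to the limit $k \to \infty$ and hence the convexity of $\mathcal{M}$ along $u_t$ follow from standard approximation and lower semi-continuity arguments.
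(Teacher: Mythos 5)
Your overall architecture (Chen's formula \ref{eq:formula for mab intro}, affinity/convexity of the energy part, Bergman kernels plus plurisubharmonic variation for the entropy part) is the right one and matches the paper, but the specific mechanism you propose for the entropy term has a genuine gap. You want to build global functionals $L_{k}(u)$ from $-\frac{1}{k}\log\det$ of the Gram matrix of $H^{0}(X,kL)$ and identify their limit with the entropy via a Tian--Yau--Zelditch expansion. The leading term of $\log\det\mathrm{Gram}_{k}$ is of order $k^{n+1}$ and recovers only the Monge--Amp\`ere energy $\mathcal{E}(u)$, which is affine along the geodesic and carries no information about the entropy; the entropy (together with $\mathcal{E}^{\mathrm{Ric}\,\omega_{0}}$) only appears in the subleading, order $k^{n}$, coefficient. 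Extracting that coefficient requires the second term of the TYZ expansion of the Bergman kernel --- the scalar curvature term --- which is simply not available when $\omega_{u_{\tau}}$ is merely a positive current with $L^{\infty}$ coefficients, as it is along a weak geodesic (the scalar curvature is not even defined there). Even granting an expansion, passing subharmonicity of $L_{k}(u_{\tau})$ to the rescaled difference $(L_{k}-k\cdot(\text{leading term}))$ requires uniform control of the error terms that you have no means of obtaining. The paper sidesteps all of this: it never uses more than the \emph{first-order} asymptotics $\beta_{k}\rightarrow(dd^{c}\phi)^{n}$ (Theorem \ref{Thm:bergmankernel as}, itself nontrivial for weights with only $L^{\infty}_{loc}$ Laplacian), because the Bergman kernel is used to approximate the \emph{metric} $\Psi=\log\omega_{u_{\tau}}^{n}$ on $K_{M/D}$ inside the current $T=dd^{c}\Psi\wedge(\pi^{*}\omega+dd^{c}U)^{n}$, not the entropy functional itself. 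The positivity then comes from the decomposition \ref{eq:decomp of bergman intro}: $dd^{c}\log\beta_{k}\geq-k\,dd^{c}\Phi$, and the divergent term $-k\,dd^{c}\Phi$ is killed by wedging with $(dd^{c}\Phi)^{n}$ via the geodesic equation.

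Two further points. First, the obstacle you single out --- that Berndtsson's positivity of Bergman kernels ``does not apply directly'' to bounded weights --- is misidentified: the plurisubharmonic variation of Bergman kernels in \cite{bern00} holds for arbitrary psh weights with no regularity or strict positivity hypotheses, and the paper applies it as is. The actual new difficulties are the low-regularity Bergman asymptotics just mentioned, and a truncation $\Psi_{A}=\max\{\log\omega_{u_{\tau}}^{n},\chi-A\}$ needed because the metric $\log\omega_{u_{\tau}}^{n}$ is unbounded below where the Monge--Amp\`ere measure degenerates. Second, your argument at best yields subharmonicity of $\mathcal{M}(u_{\tau})$ in the distributional sense; since the entropy is a priori only lower semicontinuous along the geodesic, one must separately prove continuity of $\mathcal{M}(u_{t})$ to conclude genuine convexity (this is Theorem \ref{thm:continuity text} in the paper, and it requires an additional argument with a strictly convex reparametrization $\kappa_{\epsilon}$ of the entropy density). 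Your proposal omits this step entirely.
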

We will also show (Theorem \ref{thm:main text}) that $\mathcal{M}$
is 'weakly subharmonic' (see section 3 for precise definitions) subharmonic along any curve $u_{\tau}$ satisfying the complex
Monge-Amp\`ere equation \ref{eq:ma eq for geod intro} on $X\times D,$
as long as Chen's regularity property holds, i.e. $\pi^{*}\omega+dd^{c}U$
is a positive current with coefficients in $L^{\infty}.$ The subharmonicity
of the Mabuchi functional under stronger regularity assumptions on
the solution $U$ to the equation \ref{eq:ma eq for geod intro} (so
called ``almost smooth'' solutions) has been shown by Chen-Tian
\cite{c-t}. The key point of the proof of Theorem \ref{thm:main intro}
is a new local positivity property of the relative canonical line
bundle $K_{M/D}$ along the one-dimensiona
 current 
\[
S:=(\pi^{*}\omega+dd^{c}U)^{n}
\]
 in the product $M=X\times D.$ This can be seen as a generalization
of a positivity property of Monge-Amp\`ere foliations due to Beford-Burns
\cite{b-b}, further developed by Chen-Tian \cite{c-t}, since $S$
can be realized as an average of the leaves of such a foliation, when
it exists. But it should be stressed that one of the main points of
our approach is that it does not require the existence of any sort
of Monge-Amp\`ere foliation. Our proof uses plurisubharmonic variation of
local Bergman kernels (\cite{Mai-Yam}, \cite{bern00}); see Section \ref{sub:A-sketch-of} below for
a sketch of the proof and Section \ref{sub:A-positivity-property}
for comparison with previous results. 

We will also give some applications of Theorem \ref{thm:main intro}
to K\"ahler geometry, which have previously - in their full generality
- only been shown using the partial regularity theory of Chen-Tian
\cite{c-t}. Very recently however it has been showed by Julius Ross and David Witt Nystr\"om (see \cite{Ross-Witt}) that the partial regularity results do not hold as stated  in \cite{c-t}, so it seems that the earlier proofs are not complete.

We start with the following corollary which follows immediately
from the previous theorem, using the ``sub-slope property'' of convex
functions.
\begin{cor}
\label{cor:csck min mab etc intro}Any K\"ahler metric with constant
scalar curvature metric minimizes the corresponding Mabuchi functional.
More precisely, the following inequality holds 
\begin{equation}
\mathcal{M}(u_{1})-\mathcal{M}(u_{0})\geq-d(u_{1},u_{0})\sqrt{\mathcal{C}(u_{0})},\label{eq:chens ineq in intro}
\end{equation}
 for any two K\"ahler potentials $u_{0}$ and $u_{1}$ on a K\"ahler manifold
$(X,\omega),$ where $d$ is the distance function corresponding to
the Mabuchi metric and $\mathcal{C}$ denotes the Calabi energy, i.e.
$\mathcal{C}(u):=\int(R_{\omega_{u}}-\bar{R})^{2}\omega_{u}^{n}$
\end{cor}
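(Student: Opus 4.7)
The plan is to combine Theorem \ref{thm:main intro} with two classical pieces of convex‐analytic and Riemannian information: the sub-slope property of a convex function of one variable, and the fact that on a Riemannian manifold the derivative of a functional in a given direction is controlled by the norm of its gradient times the speed. Concretely, let $u_{t}$ be the weak geodesic from $u_{0}$ to $u_{1}$, parametrized on $[0,1]$, and set $f(t):=\mathcal{M}(u_{t})$. By Theorem \ref{thm:main intro}, $f$ is a real-valued convex function on $[0,1]$. For any such convex function the right derivative $f'(0^{+})\in[-\infty,+\infty)$ exists, and one has the one-sided mean-value inequality
\[
f(1)-f(0)\;\geq\; f'(0^{+}).
\]
Thus it suffices to show that $f'(0^{+})\geq -d(u_{0},u_{1})\sqrt{\mathcal{C}(u_{0})}$.

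Next I would identify $f'(0^{+})$ with the pairing of the Mabuchi gradient at $u_0$ against the initial velocity of the geodesic. By the results of Chen (and Blocki), if $u_{0}\in\mathcal{H}(X,\omega)$ is smooth then the weak geodesic $u_{t}$ is $C^{1,1}$ in the complexified time variable near $t=0$, so $\dot{u}_{0}:=\tfrac{d}{dt}|_{t=0^{+}}u_{t}$ is a well-defined bounded function on $X$. Differentiating the explicit Chen–Tian formula (\ref{eq:formula for mab intro}) at $t=0$ (where everything is smooth and the geodesic starts from a smooth point) reproduces the defining identity (\ref{eq:def of mab as gradient intro}), giving
\[
f'(0^{+})\;=\;\int_{X}\dot{u}_{0}\cdot\bigl(-(R_{\omega_{u_{0}}}-\bar{R})\bigr)\,\omega_{u_{0}}^{n}.
\]

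Applying Cauchy–Schwarz to this integral, with respect to the measure $\omega_{u_{0}}^{n}$, yields
\[
\bigl|f'(0^{+})\bigr|\;\leq\;\Bigl(\int_{X}\dot{u}_{0}^{2}\,\omega_{u_{0}}^{n}\Bigr)^{\!1/2}\Bigl(\int_{X}(R_{\omega_{u_{0}}}-\bar{R})^{2}\,\omega_{u_{0}}^{n}\Bigr)^{\!1/2}\;=\;\|\dot{u}_{0}\|_{g}\cdot\sqrt{\mathcal{C}(u_{0})}.
\]
Since weak geodesics have constant speed with $\|\dot{u}_{0}\|_{g}=d(u_{0},u_{1})$, the desired inequality (\ref{eq:chens ineq in intro}) follows. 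The first statement of the corollary is the special case $\mathcal{C}(u_{0})=0$.

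The only delicate point, and really the only possible obstacle, is ensuring that the identification $f'(0^{+})=\int_{X}\dot{u}_{0}(-(R_{\omega_{u_{0}}}-\bar{R}))\,\omega_{u_{0}}^{n}$ is rigorously justified in the weak setting. If $u_{0}$ is smooth this is handled by Chen's $C^{1,1}$-estimate for the weak geodesic, which gives enough regularity at the endpoint to differentiate the explicit formula (\ref{eq:formula for mab intro}); if one only assumes $u_{0}\in\mathcal{H}$ this requires an approximation argument (smoothing $u_{1}$ along a family of geodesics) combined with the convexity provided by Theorem \ref{thm:main intro} and standard lower semicontinuity of $\mathcal{M}$. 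Once that identification is in place, the rest is just convex analysis and Cauchy–Schwarz.
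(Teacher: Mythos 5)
Your overall architecture — convexity of $f(t):=\mathcal{M}(u_t)$ from Theorem \ref{thm:main intro}, the sub-slope inequality $f(1)-f(0)\geq f'(0^{+})$, Cauchy--Schwarz, and the constant-speed identity $d(u_0,u_1)^2=\int_X\dot{u}_0^2\,\omega_{u_0}^n$ — is exactly the paper's. The gap is in the middle step, where you assert the exact identification
\[
f'(0^{+})=\int_{X}\dot{u}_{0}\,\bigl(-(R_{\omega_{u_{0}}}-\bar{R})\bigr)\,\omega_{u_{0}}^{n}
\]
by ``differentiating the explicit formula \ref{eq:formula for mab intro} at $t=0$'' and claim that Chen's $C^{1,1}$ estimate supplies the needed regularity. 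It does not: the difficulty is not the regularity of $u_t$ but the entropy term. For $t>0$ the measure $\omega_{u_t}^n$ is merely a positive $L^{\infty}$ density which may vanish on large sets, so $\log(\omega_{u_t}^n/\omega_0^n)$ is unbounded below and there is no obvious way to differentiate $H_{\mu_0}(\omega_{u_t}^n)$ in $t$ at the endpoint; the $C^{1,1}$ bound controls $\dot u_t$ and the upper bound on $\omega_{u_t}^n$ but gives no lower bound on the density. So the equality you invoke is unproved (and is not known in this generality).

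The paper avoids this entirely by proving only the one-sided inequality $f'(0^{+})\geq\int_X(-R_{\omega_{u_0}}+\bar{R})\,\dot{u}_0\,\omega_{u_0}^n$, which is all the argument needs: combined with Cauchy--Schwarz applied to the right-hand side it still yields \ref{eq:chens ineq in intro}. This is Lemma \ref{lem:deriv av mab along singular}, whose proof uses the convexity of $\nu\mapsto H_{\mu_0}(\nu)$ for the affine structure on probability measures to bound the difference quotient of the entropy from below by $\int_X\log(\omega_{u_0}^n/\mu_0)\,\tfrac{1}{t}(\omega_{u_t}^n-\omega_{u_0}^n)$; since $\omega_{u_0}$ is a genuine K\"ahler form, $\log(\omega_{u_0}^n/\mu_0)$ is a fixed \emph{smooth} function, and after integration by parts the limit $t\to 0^{+}$ is harmless. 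To repair your proof, replace the claimed equality for $f'(0^{+})$ by this one-sided estimate (or prove the lemma yourself along these lines); the rest of your argument then goes through unchanged, since you only ever use the lower bound on $f'(0^{+})$.
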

The minimizing property above was first shown by Chen in the case
when the first Chern class $c_{1}(X)$ is non-positive and by Donaldson
\cite{d00,d00II}, in the case when the K\"ahler class in question is
integral, i.e. when it coincides with the first Chern class of an
ample line bundle $L$ over $X.$ The general case was treated by
Chen-Tian in \cite{c-t}, using their partial regularity theory and
approximation arguments and the inequality \ref{eq:chens ineq in intro}
was then obtained by Chen, building on \cite{c-t}. 

In the case of smooth geodesics it is well-known that the Mabuchi
functional $\mathcal{M}$ is \emph{strictly }convex modulo automorphisms,
or more precisely modulo the group $\mbox{Aut}_{0}(X)$ defined as
the connected component of the identity in the group of all biholomorphisms
of $X.$ If one could establish the corresponding strict convexity
for \emph{weak }geodesics - which seems very challenging - then it
would immediately imply the uniqueness modulo $\mbox{Aut}_{0}(X)$
of the critical points of $\mathcal{M}$, i.e. of cohomologous K\"ahler
metrics with constant scalar curvature. Here we will show that the
conjectural general strict convexity result referred to above is not
needed to establish the uniqueness result in question; it follows
from a rather general argument combining the convexity in Theorem
\ref{thm:main intro} with the well-known fact that the strict convexity
modulo $\mbox{Aut}_{0}(X)$ does hold at the linearized level (in
other words, the Hessian of $\mathcal{M}$ at a critical point of
$\mathcal{M}$ degenerates precisely along the action of holomorphic
vector fields). 
\begin{thm}
\label{thm:uniqueness intro} Given any two cohomologous K\"ahler metrics
$\omega_{0}$ and $\omega_{1}$ on $X$ with constant scalar curvature
there exists  an element g in the connected component $\mbox{Aut}_{0}(X)$
of the identity in the group of all biholomorphisms of $X$ such that
$\omega_{0}=g^{*}\omega_{1}.$ 
\end{thm}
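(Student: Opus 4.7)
The plan is to combine the convexity from Theorem~\ref{thm:main intro} with the classical infinitesimal strict convexity of $\mathcal{M}$ modulo $\mathrm{Aut}_0(X)$ at a cscK metric. Write $\omega_i=\omega+dd^c u_i$ for $i=0,1$. Since both potentials are critical points of $\mathcal{M}$, and in fact minimizers by Corollary~\ref{cor:csck min mab etc intro}, one has $\mathcal{M}(u_0)=\mathcal{M}(u_1)=\inf_{\mathcal{H}_{1,1}}\mathcal{M}$. Letting $u_t$ be the Chen--Blocki weak geodesic joining $u_0$ to $u_1$, Theorem~\ref{thm:main intro} together with the equality of the endpoint values forces $t\mapsto\mathcal{M}(u_t)$ to be constant (and equal to the infimum), so every $u_t$ is itself a minimizer of $\mathcal{M}$.

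From this rigidity I would extract a generator in $\mathrm{Aut}_0(X)$ as follows. At a smooth cscK potential $u_0$ the Hessian of $\mathcal{M}$ equals the Lichnerowicz operator $\mathcal{L}_{u_0}$, characterized by $\int_X v\,\mathcal{L}_{u_0}v\,\omega_{u_0}^n=\|\bar\partial\,\mathrm{grad}^{1,0}v\|^2_{\omega_{u_0}}$, whose kernel consists precisely of Hamiltonian potentials of holomorphic vector fields on $X$. The idea is to deform the geodesic by varying its initial endpoint along a smooth curve $v_s\in\mathcal{H}$ with $v_0=u_0$, form the family of weak geodesics $u_t^s$ joining $v_s$ to $u_1$, and compare the convexity of $s\mapsto\mathcal{M}(u_t^s)$ with the affineness of $\mathcal{M}$ along $u_t^0=u_t$. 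The resulting second-variation inequality would force the initial slope $\dot u_0:=\partial_t u_t|_{t=0^+}$, an $L^\infty$ function on $X$, to lie in $\ker\mathcal{L}_{u_0}$, so $\dot u_0$ is the Hamiltonian potential of a holomorphic vector field $V$ on $X$. Its imaginary-time flow $\sigma_t\in\mathrm{Aut}_0(X)$ generates a smooth geodesic in $\mathcal{H}$ with the same initial data as $u_t$, and by uniqueness of weak geodesics we must have $u_t=\sigma_t^{*}u_0$ up to an additive constant. Setting $g:=\sigma_1\in\mathrm{Aut}_0(X)$ yields $\omega_1=g^{*}\omega_0$.

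The main obstacle is identifying $\dot u_0$ as an element of $\ker\mathcal{L}_{u_0}$: the weak geodesic is only $C^{1,\bar 1}$ in the interior and $\dot u_0$ is merely $L^\infty$, so the smooth second-variation argument is not directly applicable. A natural route is to replace $u_t$ by an $\varepsilon$-regularized geodesic $u_t^{\varepsilon}$ solving $(\pi^{*}\omega+dd^c U)^{n+1}=\varepsilon\,\omega_0^{n+1}$, use the explicit formula~\eqref{eq:formula for mab intro} for $\mathcal{M}$ to pass the second-variation inequality through the limit $\varepsilon\downarrow 0$, and upgrade the resulting distributional kernel direction to a genuine holomorphic vector field using the strict positivity of $\mathcal{L}_{u_0}$ transverse to the Lie algebra of $\mathrm{Aut}_0(X)$. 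Crucially, this loop should close without appealing to any partial regularity of the homogeneous Monge--Amp\`ere equation, which, as noted in the excerpt, is no longer available in full generality.
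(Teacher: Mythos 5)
Your proposal follows exactly the route that the paper explicitly identifies as open and deliberately avoids: deducing from the affineness of $t\mapsto\mathcal{M}(u_t)$ along the weak geodesic that the geodesic is generated by the flow of a holomorphic vector field. The first step (both endpoints minimize, so convexity forces $\mathcal{M}(u_t)$ to be constant) is fine, but the crucial step --- showing that the initial slope $\dot u_0$ lies in $\ker\mathcal{D}_{u_0}$ via a second-variation argument --- is precisely the ``strict convexity along weak geodesics'' statement that the introduction calls very challenging, and that Section 4 flags again (``we don't know when it is linear along the geodesics; conjecturally this holds only for geodesics that come from the flow of a holomorphic vector field''). Your proposed repair, regularizing by $(\pi^*\omega+dd^cU)^{n+1}=\varepsilon\,\omega_0^{n+1}$ and passing the second variation through $\varepsilon\downarrow 0$, is essentially the Chen--Tian partial-regularity program, which the paper points out (citing Ross--Witt Nystr\"om) does not hold as stated; there is no control of the second variation of $\mathcal{M}$ in this degenerate limit, and $\dot u_0$ is only an $L^\infty$ function. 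A second, independent gap: even granting $\dot u_0\in\ker\mathcal{D}_{u_0}$, your conclusion $u_t=\sigma_t^*u_0$ ``by uniqueness of weak geodesics'' does not follow --- weak geodesics are unique as solutions of a two-point \emph{boundary} value problem, not of an initial value problem, so agreement of initial position and velocity does not identify the curve.

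The paper's actual argument is structurally different and sidesteps all of this. It perturbs $\mathcal{M}$ to $\mathcal{M}_s=\mathcal{M}+s\mathcal{F}_\mu$ with $\mathcal{F}_\mu$ strictly convex (Proposition 4.1), uses the invertibility of the Hessian of $\mathcal{M}$ orthogonally to $\ker\mathcal{D}_u$ (Proposition 4.3) to produce \emph{approximately} critical points $u_i+sv_i$ of $\mathcal{M}_s$ with $d\mathcal{M}_s|_{u_i+sv_i}=O(s^2)$ --- after first moving $u_0,u_1$ by automorphisms minimizing $\mathcal{F}_\mu$ on their $\mathrm{Aut}_0(X)$-orbits (Proposition 4.6) so that the linearized equation is solvable --- and then connects these by a weak geodesic. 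Convexity of $\mathcal{M}$ plus the quantitative strict convexity of $I_\mu$ gives $d(\omega_{u_0^s},\omega_{u_1^s})^2\leq C''s$, whence the distance is zero. The only input about the geodesic itself is the convexity of Theorem 1.1; no linearity-implies-flow dichotomy is ever needed. As it stands your proof has a genuine gap at its central step and would need the paper's perturbation idea (or a new resolution of the strict-convexity conjecture) to close.
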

In the case when $[\omega]=c_{1}(X)$ this result is due to Bando-Mabuchi
\cite{b-m} while the case $[\omega]=c_{1}(L)$ with $\mbox{Aut}_{0}(X)$
trivial was shown by Donaldson \cite{d00}, using approximation with
so called balanced metrics attached to high tensor powers of the line
bundle $L.$ The general uniqueness result appears in \cite{c-t}.

Our approach to the uniqueness theorem consists in adding a small strictly convex perturbation to the Mabuchi functional. The perturbed functional is then strictly convex so it can then have at most one critical point. In case  $\mbox{Aut}_{0}(X)$ is discrete, or equivalently there are no nontrivial holomorphic vector fields on $X$, it follows from the implicit function theorem that near any (smooth) critical point of the Mabuchi functional there is a critical point of such a perturbed functional, so the Mabuchi functional can also have at most one critical point. In the general case, when  $\mbox{Aut}_{0}(X)$ is nontrivial, critical points of $\mathcal{M}$ cannot in general be approximated by critical points of the perturbed functional. (Indeed, if this were possible we would get absolute uniqueness instead of uniqueness modulo automorphisms.) However, we prove that such approximation is possible if we first move the critical point by a suitable automorphism, and this permits us to prove uniqueness modulo automorphisms in the general case.
This is the principle of the proof, but in order to avoid technical complications  (that arise when there are nontrivial holomorphic vector fields) we will instead work with 'approximately critical points' so in the end we  avoid the actual use of the implicit function theorem.

More specifically, we will consider the setting of K\"ahler metrics with
constant $\alpha-$twisted scalar curvature, defined with respect
to a given ``twisting form'' $\alpha,$ i.e. a smooth closed non-negative
$(1,1)-$form on $X$ (see Section \ref{sub:The-twisted-setting}),
as well as Calabi's extremal metrics (Section \ref{sub:Calabi's-extremal-metrics}).
As shown in \cite{fi} the twisted setting appear naturally in the
case when $X$ is realized as the base of a fibration whose fibers
are equipped with constant scalar curvature metrics (then the role
of the twisting form $\alpha$ is played by the corresponding Weil-Peterson
metric on the base $X$ describing the variation of the complex structures
of the fibers); see also \cite{s-t} for relation to the K\"ahler-Ricci
flow on varieties of positive Kodaira dimension and \cite{sto} for
the relation to the algebro-geometric slope stability of Ross-Thomas.
Let us finally point out that Theorem 1.1 can also be extended to Tian-Zhu's modified K-energy functional \cite{t-z1b}, whose critical points are K\"ahler-Ricci solitons (details will appear elsewhere).

\subsubsection{Further extensions and applications }

One new feature of our method, further exploited in the companion
paper \cite{bch} by Lu and the first author, is that it also has bearings on the uniqueness
and regularity problem for very weak minimizers of the (twisted) Mabuchi
functional. The point is that, extending the results in \cite{bbegz}
concerning the case when $[\omega]=c_{1}(X)$, the Mabuchi functional,
as defined by formula \ref{eq:formula for mab intro}, can be extended
to the ``finite energy'' completion  $\mathcal{E}^{1}(X,\omega)$
of the space $\mathcal{H}(X,\omega)$ introduced by Guedj-Zeriahi
\cite{g-z1}, with good continuity/compactness properties. In particular,
the corresponding uniqueness result in the finite energy setting can
be used to study the convergence properties of a weak version of the
Calabi flow. To briefly explain this recall that the latter flow,
in its classical form, may be defined as the down-ward gradient flow
of the Mabuchi functional on the infinite dimensional Riemann manifold
$\mathcal{H}(X,\omega)$ equipped with the Mabuchi metric. Even if
the long-time existence of the classical Calabi flow is still open
it was shown by Streets \cite{st1} that a weak version of the Calabi
flow, dubbed the K-energy minimizing movement, is always well-defined
on the metric completion of the Mabuchi space $\mathcal{H}(X,\omega).$
Building on \cite{bbegz} and the very recent work  by
Darvas and Guedj, \cite{dar}, \cite{dar2} and \cite{Guedj},  we will show in \cite{bch} that the K-energy minimizing emanating
from a given potential $u_{0}$ in $\mathcal{H}(X,\omega),$ gives
rise to a curve of finite energy potentials in $\mathcal{E}^{1}(X,\omega)$
that we will call the\emph{ finite energy Calabi flow }with the property
that the corresponding positive currents $\omega_{t}$ have a top
intersection $\omega_{t}^{n}$ defining a measure on $X$ with finite
entropy and good convergence properties. More precisely, the following
convergence result hods:
\begin{thm}
\cite{bch} Let $[\omega]$ be a K\"ahler class on $X$ and $\alpha$
fixed smooth closed $(1,1)-$ form on $X.$ Assume that $[\omega]$
contains a K\"ahler metric with constant $\alpha-$twisted scalar curvature
$\omega_{\alpha}$ and that either $\alpha>0$ or $X$ admits no non-trivial
holomorphic vector fields and $[\omega]$ is proportional to $c_{1}(X).$
Then the finite energy twisted Calabi flow $\omega_{t}$ converges
in the weak sense of currents on $X$ towards $\omega_{\alpha},$
as $t\rightarrow\infty.$ More precisely, the measures $\omega_{t}^{n}$
converge in entropy towards the volume form $\omega_{\alpha}^{n}$
of $\omega_{\alpha}.$
\end{thm}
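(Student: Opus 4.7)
The plan is to view the finite energy twisted Calabi flow as a minimizing movement for the twisted Mabuchi functional $\mathcal{M}_{\alpha}$ on the metric completion $\mathcal{E}^{1}(X,\omega)$ of $\mathcal{H}(X,\omega)$, and to exploit the extension of Theorem \ref{thm:main intro} to this enlarged setting in order to force convergence to the unique minimizer $\omega_{\alpha}$.

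First I would extend the explicit formula \eqref{eq:formula for mab intro} to the twisted Mabuchi functional $\mathcal{M}_{\alpha}$ and show that the extension descends to a well-defined, lower semicontinuous, convex functional on $(\mathcal{E}^{1}(X,\omega),d_{1})$, where $d_{1}$ denotes the Darvas metric. Convexity along finite energy weak geodesics follows from Theorem \ref{thm:main intro} together with the observation that the twisting contribution is convex in $u$, and in fact strictly convex along every non-trivial geodesic when $\alpha>0$. Next I would verify that $\omega_{\alpha}$ is a minimizer of $\mathcal{M}_{\alpha}$ on all of $\mathcal{E}^{1}$, via a twisted analogue of Corollary \ref{cor:csck min mab etc intro}, and that under either of the stated hypotheses the minimizer is \emph{absolutely} unique: if $\alpha>0$ the twisted term delivers strict convexity along every non-constant geodesic, while if $[\omega]$ is proportional to $c_{1}(X)$ and $\mbox{Aut}_{0}(X)$ is trivial, absolute uniqueness is a special case of Theorem \ref{thm:uniqueness intro} extended to the finite energy class.

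With uniqueness in hand, the convergence argument proceeds as follows. The minimizing movement construction produces a curve $\omega_{t}$ along which $t\mapsto \mathcal{M}_{\alpha}(\omega_{t})$ is non-increasing and bounded below by $\mathcal{M}_{\alpha}(\omega_{\alpha})$, and successive increments of the scheme have controlled $d_{1}$-length. The $\mathcal{E}^{1}$-compactness results of Darvas and Guedj \cite{dar,dar2,Guedj} imply that sublevel sets of $\mathcal{M}_{\alpha}$, intersected with a $d_{1}$-ball around $u_{0}$, are $d_{1}$-compact, so any sequence $t_{k}\to\infty$ admits a $d_{1}$-convergent subsequence whose limit, by lower semicontinuity, minimizes $\mathcal{M}_{\alpha}$. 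Absolute uniqueness forces this limit to be $\omega_{\alpha}$, and independence of the subsequence then yields convergence of the entire flow in the weak topology of currents. The entropy statement follows by decomposing $\mathcal{M}(u)=\mathscr{E}(u)+\int_{X}\log(\omega_{u}^{n}/\omega_{0}^{n})\omega_{u}^{n}$: the first term is $d_{1}$-continuous while $\mathcal{M}(\omega_{t})\to\mathcal{M}(\omega_{\alpha})$ by the above, so the entropy terms must converge.

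The main obstacle I expect is carrying out the minimizing movement rigorously in the non-smooth metric space $(\mathcal{E}^{1},d_{1})$: one must produce a priori estimates guaranteeing that each discrete time step remains in a fixed sublevel set of $\mathcal{M}_{\alpha}$, and that no mass escapes to the metric boundary of $\mathcal{E}^{1}$ along the flow. This is exactly the analytic heart of the companion paper \cite{bch}, and rests on the Darvas-Guedj theory combined with the coercivity information one extracts from the explicit formula for $\mathcal{M}_\alpha$. A secondary delicate point is transferring Theorem \ref{thm:uniqueness intro} from smooth potentials to the finite energy class, where the action of $\mbox{Aut}_{0}(X)$ on $\mathcal{E}^{1}$ requires care; the hypothesis that $\mbox{Aut}_{0}(X)$ is trivial in the untwisted case precisely avoids this subtlety, while in the case $\alpha>0$ strict convexity bypasses it altogether.
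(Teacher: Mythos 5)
The first thing to note is that the paper does not actually prove this theorem: it is stated as a result of the companion paper \cite{bch} (listed as ``in preparation''), and the surrounding text only sketches the intended strategy --- extension of the (twisted) Mabuchi functional to the finite energy space $\mathcal{E}^{1}(X,\omega)$, Streets' minimizing movement, the compactness theory of Darvas and Guedj, and a uniqueness theorem for finite energy minimizers. Your outline reproduces that announced strategy faithfully, so there is no divergence of approach to report; but since there is no proof in this paper to compare against, your proposal has to be judged on its own terms, and as such it has genuine gaps beyond the ones you flag yourself.

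First, the step ``any sequence $t_{k}\to\infty$ admits a $d_{1}$-convergent subsequence whose limit, by lower semicontinuity, minimizes $\mathcal{M}_{\alpha}$'' does not hold as written. Lower semicontinuity only gives $\mathcal{M}_{\alpha}(\text{limit})\leq\lim_{t}\mathcal{M}_{\alpha}(\omega_{t})$; it does not tell you that this limit of values equals $\inf\mathcal{M}_{\alpha}$. That the minimizing movement drives the functional all the way down to its infimum is precisely where the convexity along finite energy geodesics must enter, via the general theory of gradient flows of convex lower semicontinuous functionals on non-positively curved metric spaces; without this input the trajectory could stall at a non-minimizing value. Second, in the untwisted case you invoke Theorem \ref{thm:uniqueness intro} for absolute uniqueness, but that theorem concerns smooth constant scalar curvature metrics, whereas the subsequential limit produced by compactness is a priori only a finite energy minimizer with finite entropy and no regularity. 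What is needed is uniqueness of \emph{minimizers} in $\mathcal{E}^{1}$ --- exactly the ``uniqueness and regularity problem for very weak minimizers'' that the paper says is the subject of \cite{bch} --- and this is also where the hypothesis that $[\omega]$ be proportional to $c_{1}(X)$ enters (via the finite energy uniqueness results of \cite{bbegz}), a role your sketch leaves unexplained. Finally, ``convergence in entropy'' towards $\omega_{\alpha}^{n}$ is a stronger assertion than weak convergence together with convergence of the entropy values $H_{\mu_{0}}(\omega_{t}^{n})$; your last paragraph establishes only the latter.
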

The relation to previous results is discussed in \cite{bch}. Some
further interactions between the Mabuchi functional and the notions
of finite energy and entropy are also studied in \cite{bch}. For
example, it is shown that the extended Mabuchi functional remains
convex along finite energy geodesics. Moreover, using finite energy
geodesics one can define a notion of ``weak Mabuchi geodesics''
in the space $\mathcal{P}(X)$ of all probability measures on a compact
K\"ahler manifold $X,$ such that the space of all probability measures
$\mu$ with finite entropy becomes geodesically closed and such that
the entropy functional defined with respect to a K\"ahler metric with
non-negative Ricci curvature becomes geodesically convex. As explained
in \cite{bch} the latter convexity property can be seen as the complex
version of a fundamental convexity property in the setting of optimal
transport theory.

\subsection{\label{sub:A-sketch-of}A sketch of the proof of Theorem \ref{thm:main intro}}

Let us sketch the proof of Theorem \ref{thm:main text} in the special
case when $\omega_{u_{t}}$ is continuous and strictly positive. The
starting point is the following essentially well-known formula for
the second order variation of the Mabuchi functional:
\begin{equation}
d_{t}d_{t}^{c}\mathcal{M}(u_{t})=\int_{X}T,\,\,\, T:=dd^{c}\Psi\wedge(\pi^{*}\omega+dd^{c}U)^{n},\,\,\,\,\Psi_{t}:=\log(\omega_{u_{t}}^{n}).\label{eq:form for dd of mab intro}
\end{equation}
Here $\Psi$ denotes the local weight of the metric on the relative
canonical line bundle $K_{M/D}\rightarrow M$ induced by the metrics
$\omega_{u_{t}}$ on $TX$ and $\int_{X}$ denotes the fiber-wise
integral, i.e. the natural map pushing forward a form on $M:=X\times D$
to a form on the base $D.$ (This formula follows from \ref{eq:formula for mab intro}, using that $dd^c$ commutes with push-forwards.) The proof proceeds by showing that the
integrand $T$ in formula \ref{eq:form for dd of mab intro} is a
non-negative top form on $M$ and in particular its push-forward to
$D$ is also non-negative, as desired. First observe that we can locally
write $\pi^{*}\omega+dd^{c}U=dd^{c}\Phi$ for a local plurisubharmonic
function $\Phi(t,z)=\phi_{t}(z),$ defined on the unit-ball in $\C^{n}.$
Accordingly, $\omega_{u_{t}}^{n}$ may be written as $(dd^{c}\phi_{t})^{n}$
locally on $X$ and by well-known convergence results for Bergman
kernels going back to H\"ormander, Bouche \cite{bo} and Tian \cite{t0},
the form $T$ can thus be locally realized as the weak limit, as $k\rightarrow\infty,$
of the forms $T_{k}$ defined by 
\[
T_{k}:=dd^{c}\log B_{k\phi_{t}}\wedge(dd^{c}\Phi)^{n},
\]
 where $B_{k\phi}:=K_{k\phi}e^{-k\phi}$ is the Bergman function (density
of states function) for the Hilbert space of all holomorphic functions
on the unit ball equipped with the standard $L^{2}-$norm weighted
by the factor $e^{-k\phi}.$ Finally, by the  results on plurisubharmonic variation of Bergman kernels in
\cite{bern00} the function $\log K_{k\phi_{t}}$ is plurisubharmonic
on $X\times D$ and hence 
\begin{equation}
dd^{c}\log B_{k\phi_{t}}=dd^{c}\log K_{k\phi_{t}}-kdd^{c}\Phi\geq0-kdd^{c}\Phi\label{eq:decomp of bergman intro}
\end{equation}
 Since the latter form vanishes when wedged with $(dd^{c}\Phi)^{n}$
(by the geodesic equation) this show that $T_{k}\geq0.$ Hence letting
$k\rightarrow\infty$ shows that $T\geq0$, which concludes the proof
of Theorem \ref{thm:main intro} under the simplifying assumption
that $\omega_{u_{t}}$ be continuous and strictly positive. The proof
in the general case involves a truncation procedure (to compensate
the lack of strict positivity of the measures $\omega_{u_{t}}^{n})$
and a generalization of the Bergman kernel asymptotics used above
to the case when the curvature form $dd^{c}\phi$ is merely in $L_{loc}^{\infty}.$ 

An intriguing aspect of our proof is that it relies on the individual
positivity properties of the two currents $dd^{c}\log K_{k\phi_{t}}$
and $-kdd^{c}\Phi$ appearing in the decomposition \ref{eq:decomp of bergman intro}
and these two currents diverge in the ``semi-classical'' limit $k\rightarrow\infty$
(contrary to their sum which converges to $dd^{c}\Psi).$ Hence, our
decomposition argument does not seem to have any direct analog for
the current $dd^{c}\Psi$ itself.

Finally we would like to thank S\'ebastien
 Boucksom and Mihai P\u aun
 for pointing out an omission in the first version of this paper regarding the continuity of the K-energy. After the first version of our paper was posted on the Arxiv, an alternative proof of the convexity of the K-energy, based on Monge-Amp\`ere equations instead of Bergman kernel has also been posted by XX Chen, L Li and M P\u aun
, see \cite{Chen-Li-Paun}. (In this paper it is also proved that $\M$ is subharmonic ( not just weakly subharmonic) along any complex $\mathcal{C}^{1,1}$ curve $u_\tau$ satisfying the complex Monge-Amp\`ere equation.)

\section{\label{sec:Regularity-of-weak}Weak geodesics and Bergman kernel
asymptotics}

\subsection{Preliminaries}

We start by introducing the notation for (quasi-) psh functions and
metrics on line bundles that we will use. Let $(X,\omega_{0})$ be
a compact complex manifold of dimension $n$ equipped with a fixed
K\"ahler form $\omega_{0},$ i.a. a smooth real positive closed $(1,1)-$form
on $X.$ Denote by $PSH(X,\omega_{0})$ the space of all $\omega_{0}-$psh
functions $u$ on $X,$ i.e. $u\in L^{1}(X)$ and $u$ is strongly
upper-semicontinuos (usc) and 
\[
\omega_{u}:=\omega_{0}+\frac{i}{2\pi}\partial\bar{\partial}u:=\omega_{0}+dd^{c}u\geq0,
\]
 holds in the sense of currents. We will write $\mathcal{H}(X,\omega_{0})$
for the interior of $PSH(X,\omega_{0})\cap\mathcal{C}^{\infty}(X),$
i.e. the space of all K\"ahler potentials (w.r.t $\omega_{0}).$ In
the\emph{ integral case}, i.e. when $[\omega]=c_{1}(L)$ for a holomorphic
line bundle $L\rightarrow X,$ the space $PSH(X,\omega_{0})$ may
be identified with the space $\mathcal{H}_{L}$ of (singular) Hermitian
metrics on $L$ with positive curvature current. We will use additive
notion for metrics on $L,$ i.e. we identify an Hermitian metric $\left\Vert \cdot\right\Vert $
on $L$ with its ``weight'' $\phi.$ Given a covering $(U_{i},s_{i})$
of $X$ with local trivializing sections $s_{i}$ of $L_{|U_{i}}$
the object $\phi$ is defined by the collection of open functions
$\phi_{|U_{i}}$ defined by 
\[
\left\Vert s_{i}\right\Vert ^{2}=e^{-\phi_{|U_{i}}}
\]
The (normalized) curvature $\omega$ of the metric $\left\Vert \cdot\right\Vert $
is the globally well-defined $(1,1)-$current defined by the following
local expression: 
\[
\omega=dd^{c}\phi
\]
The identification between $\mathcal{H}_{l}$ and $PSH(X,\omega_{0})$
referred to above is obtained by fixing $\phi_{0}$ and identifying
$\phi$ with the function $u:=\phi-\phi_{0},$ so that $dd^{c}\phi=\omega_{u}.$

\subsubsection{\label{sub:The-regularity-of} Weak geodesics and the space $\mathcal{H}_{1,1}$}

As recalled in the introduction of the paper equipping the space $\mathcal{H}(X,\omega_{0})$
with the Mabuchi's Riemannian metric a curve $u_{t}$ in $\mathcal{H}(X,\omega_{0})$
is a geodesic iff it satisfies a complex Monge-Amp\`ere equation. More
precisely, writing $t=\log|\tau|$ for $\tau\in\C$ so that $u_{t}$
may be identified with an $S^{1}-$invariant function $U$ on $M:=X\times D,$
where $D$ denotes the corresponding annulus in $\C,$ the $\pi^{*}\omega-$psh
function $U$ (with $\pi$ denoting the natural projection from $M$
to $X)$ satisfies 

\begin{equation}
(\pi^{*}\omega+dd^{c}U)^{n\text{+1}}=0,\label{eq:ma eq in section reg}
\end{equation}
where $U$ thus coincides at the boundary $\partial M$ with the function
determined by $u_{0}$ and $u_{1}.$ As shown in \cite{c0,bl} the
previous boundary value problem always admits (for any bounded domain
$D$ in in $\C$ a weak solution in the sense that $\pi^{*}\omega+dd^{c}U$
is a positive current with bounded coefficients, up to the boundary. We say that such functions have $\mathcal{C}^{1,1}_\C$-regularity. 
In particular any given two points $u_{0}$ and $u_{1}$ in $PSH(X,\omega_{0})$
are connected by a (unique) weak geodesic $u_{t}$ as above, defining
a curve in the space $\mathcal{H}_{1,1}\subset PSH(X,\omega_{0})$
of all $u$ such that $\omega+dd^{c}u$ is a positive current with
components in $L_{loc}^{\infty}.$

\subsection{\label{sub:Bergman-kernel-asymptotics}Bergman kernel asymptotics}

Given a (possibly non-compact) complex manifold $Y$ with a line bundle
$L\rightarrow Y$ equipped with a (bounded) metric $\phi$ we denote
by $K_{k\phi}$ the  section of $(kL+K_{Y})\otimes\overline{(kL+K_{Y})}\rightarrow Y$
determined by the restriction to the diagonal of the Bergman kernel
of the space $H^{0}(Y,kL+K_{Y})$ of all global holomorphic section
of $kL+K_{Y}$ (viewed as holomorphic $n-$forms on $Y$ with values
in $kL)$ equipped with the standard $L^{2}-$norm determined by the
metric $\phi$ (assumed to be finite): 
\begin{equation}
K_{k\phi}(x)=\sup_{s\in H^{0}(Y,kL+K_{Y})}\frac{s\wedge\bar{s}(x)}{\int_{Y}s\wedge\bar{s}e^{-k\phi}}\label{eq:def of bergman kernel}
\end{equation}
 In particular, contracting the corresponding metrics on $kL$ gives
a measure on $Y$ that, after a scaling, we write as
\begin{equation}
\beta_{k}:=\frac{n!}{k^{n}}K_{k\phi_{t}}e^{-k\phi_{t}}\label{eq:def of bergman meas}
\end{equation}
 By well-known Bergman kernel asymptotics (due to Bouche \cite{bo} and
Tian \cite{t0}, independently ) in the case when $Y=X$ the convergence
$\beta_{k}\rightarrow(dd^{c}\phi)^{n}$ holds as $k\rightarrow\infty,$
uniformly on $X$, if $\phi$ is $C^{2}-$smooth and strictly positively
curved, i.e. $dd^{c}\phi>0.$ However, in our setting $\phi$ will
only have a Laplacian in $L_{loc}^{\infty}$ (and not be strictly
positively curved), i.e. $\phi$ will be in $\mathcal{H}_{1,1}$ and
hence the convergence cannot be uniform in general. Moreover, unless
the given class $[\omega]$ on $X$ is integral there will be no line
bundle $L$ over $X$ and then we will have to let $Y$ be a small
coordinate ball, identified with the unit-ball in $\C^{n},$ taking
$L$ as a the trivial line bundle. In the next theorem we show that  a sufficiently strong
version of the convergence still holds in this setting. 
\begin{thm}
\label{Thm:bergmankernel as}Let $L\rightarrow Y$ be a line bundle
over a (possibly non-compact) complex manifold $Y$ and assume that
$L$ extends to a holomorphic line bundle over a compact complex manifold
$X$ equipped with a (singular) metric $\phi$ such that the curvature
current $dd^{c}\phi$ is non-negative with components in $L_{loc}^{\infty}$
(i.e. $\phi$ is in $\mathcal{H}_{1,1})$. Denote by $\beta_{k}$
the Bergman measure on $Y$ defined with respect to the restricted
metric on $Y.$ Then, given a smooth volume form $dV$ on a compact subdomain
$E$ of $Y$ there exists a constant $C$ such that 
\begin{equation}
\beta_{k}\leq CdV\label{eq:uniform upper bound on bergman measure in proof main}
\end{equation}
on $E,$ where the constant $C$ only depends on an upper bound on
the sup-norm of $dd^{c}\phi$ on $E.$ Moreover, $\beta_{k}(x)\rightarrow(dd^{c}\phi)^{n}$
in total variation norm on $E.$\end{thm}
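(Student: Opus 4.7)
The plan is to prove the $L^{\infty}$ upper bound by a submean value argument on balls of radius $R/\sqrt{k}$, and then upgrade this bound to total variation convergence via pointwise convergence of densities almost everywhere combined with dominated convergence.

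For the upper bound, fix $x \in E$ and a section $s \in H^{0}(Y, kL + K_{Y})$ of unit $L^{2}$-norm. Working in a local trivialization of $L + K_{Y}$ near $x$, the strategy is to construct a local holomorphic function $f$ on a ball $B = B(x, R/\sqrt{k})$ (for a fixed $R > 0$) which cancels the first-order part of $\phi$, in the sense that
\[
|\phi(y) - \phi(x) - 2 \Re f(y)| \leq C R^{2} \|dd^{c}\phi\|_{L^{\infty}(2B)} / k, \qquad y \in B.
\]
One obtains such an $f$ as (the linearization at $x$ of) the holomorphic function whose real part is the pluriharmonic majorant of $\phi$ on $2B$; the estimate then follows from the Poisson--Jensen representation of $\phi$ as this majorant minus a Green potential of the bounded measure $dd^{c}\phi$, together with interior gradient bounds for harmonic functions. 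Granted this, the twisted section $\tilde{s} := s \, e^{-kf}$ is holomorphic on $B$ and satisfies $|\tilde{s}(y)|^{2} \leq e^{C R^{2}\|dd^{c}\phi\|_{\infty}} e^{k\phi(x)} |s(y)|^{2} e^{-k\phi(y)}$ on $B$. Applying the submean value inequality to the plurisubharmonic function $|\tilde{s}|^{2}$ on $B$ gives $|s(x)|^{2} = |\tilde{s}(x)|^{2} \leq C' k^{n} e^{k\phi(x)}\int_{B} |s|^{2} e^{-k\phi}\, dV$, whence $K_{k\phi}(x) e^{-k\phi(x)} \leq C k^{n}$ and $\beta_{k} \leq C\, dV$, with $C$ depending only on $\|dd^{c}\phi\|_{L^{\infty}(E)}$.

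For the convergence in total variation, I would show that the densities $\beta_{k}/dV$ converge pointwise almost everywhere to the density of $(dd^{c}\phi)^{n}$. Since both densities are bounded by the same constant (for $(dd^{c}\phi)^{n}$ this is automatic from $dd^{c}\phi \in L^{\infty}$), dominated convergence then yields $\int_{E} |\beta_{k} - (dd^{c}\phi)^{n}| \to 0$, which is the total variation statement. Pointwise convergence would be established at a.e.\ $x_{0} \in E$ where $\phi$ is twice real differentiable---a set of full Lebesgue measure since $\phi$ lies in $W^{2,p}_{\loc}$ for all finite $p$ by Calder\'on--Zygmund, and Stepanov/Alexandrov-type theorems then give second-order differentiability almost everywhere. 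At such a point the quadratic Taylor polynomial $\phi_{x_{0}}$ is a smooth psh model with constant complex Hessian $dd^{c}\phi(x_{0})$, whose Bergman kernel asymptotics are computable by explicit Gaussian integration and yield convergence to $\det(dd^{c}\phi(x_{0}))\, dV = (dd^{c}\phi)^{n}(x_{0})$. A sandwich argument comparing $\beta_{k}$ for $\phi$ with the Bergman measures for $\phi_{x_{0}} \pm \varepsilon|y-x_{0}|^{2}$ on the ball $B(x_{0}, R/\sqrt{k})$, using the Taylor error $\phi - \phi_{x_{0}} = o(|y-x_{0}|^{2})$ at $x_{0}$ and the monotonicity of Bergman kernels in the weight, then gives $\beta_{k}(x_{0}) \to (dd^{c}\phi)^{n}(x_{0})$.

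The main technical obstacle is the oscillation control in both steps, since $\phi$ need not be $C^{1,1}$ in the real sense (only the complex Hessian is bounded). In the upper bound, the Poisson--Jensen estimate is what keeps the constant dependent only on $\|dd^{c}\phi\|_{\infty}$ rather than on any stronger regularity norm of $\phi$. In the convergence step, the Taylor remainder $o(r^{2})$ is only a pointwise a.e.\ statement, yet Bergman kernels effectively "see" $\phi$ over balls of radius $1/\sqrt{k}$; reconciling these requires carefully exploiting the scaling $r \sim 1/\sqrt{k}$ together with the pointwise Taylor expansion at a twice-differentiable point, and absorbing the resulting non-uniformity in $x_{0}$ through the final dominated convergence step.
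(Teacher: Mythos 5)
Your upper-bound step has a real problem: the estimate $|\phi(y)-\phi(x)-2\Re f(y)|\leq CR^{2}\|dd^{c}\phi\|_{L^{\infty}}/k$ with $f$ a \emph{linearization} cannot hold with a constant depending only on $\|dd^{c}\phi\|_{L^{\infty}}$. Take $\phi(y)=|y|^{2}+M\Re(y_{1}^{2})$: here $dd^{c}\phi=dd^{c}|y|^{2}$ is independent of $M$, yet after subtracting any affine $2\Re f$ the oscillation on $B(x,r)$ is of order $Mr^{2}$. The pluriharmonic second-order part of $\phi$ is invisible to $dd^{c}\phi$ but not to your pointwise estimate. The proposed justification does not repair this: in $\C^{n}$, $n\geq2$, the Riesz/Poisson--Jensen decomposition pairs the \emph{harmonic} majorant with the trace $\Delta\phi$, and a harmonic function of $2n$ real variables is not pluriharmonic, so there is no holomorphic $f$ whose real part equals that majorant; a least \emph{pluriharmonic} majorant is not supplied by one-variable potential theory. (The strategy could be salvaged by producing a pluriharmonic $p$ with $\|\phi-p\|_{L^{\infty}(B_{r})}\leq Cr^{2}\|dd^{c}\phi\|_{\infty}$ via a local $dd^{c}$-lemma with uniform estimates, and keeping the \emph{full} holomorphic primitive rather than its linearization.) The paper avoids the issue entirely: it applies the submean inequality to $\log|f|^{2}$ on spheres followed by Jensen's inequality, so only the spherical averages $a_{\phi}(r)=r^{-2}\int_{|z|=r}\phi\,d\sigma_{r}$ enter; these are controlled by $\Delta\phi$ alone through Green's formula, and the harmonic part of $\phi$ drops out automatically.

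The second, more structural gap is in the total-variation step. Your plan requires the pointwise lower bound $\liminf_{k}\beta_{k}(x)\geq(dd^{c}\phi)^{n}(x)$ almost everywhere, but the sandwich you describe only compares weights on the fixed small ball $B(x_{0},R/\sqrt{k})$; shrinking the domain gives $\beta_{k,Y}\leq\beta_{k,B(x_{0},R/\sqrt{k})}$, which is an \emph{upper} bound for $\beta_{k}$ and says nothing from below. A lower bound requires extending a peak section from the small ball to all of $Y$ with $L^{2}$ control (Ohsawa--Takegoshi or H\"ormander estimates), which you never invoke, and which is delicate precisely at the low-regularity, possibly degenerate points at issue. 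Tellingly, your argument makes no use of the hypothesis that $(L,\phi)$ extends to a compact $X$. The paper's proof uses exactly that hypothesis to bypass the pointwise lower bound: it establishes only $\limsup_{k}\beta_{k}\leq(dd^{c}\phi)^{n}$ a.e.\ together with the exact mass asymptotics $\int_{X}\beta_{k,X}=\frac{n!}{k^{n}}\dim H^{0}(X,kL+K_{X})\rightarrow\int_{X}c_{1}(L)^{n}=\int_{X}(dd^{c}\phi)^{n}$ from the Hilbert--Samuel formula, and then the identity $\int|f-f_{k}|=\int(f-f_{k})+2\int(f-f_{k})_{-}$ plus Fatou converts the matched total masses and the one-sided pointwise bound into $L^{1}$ convergence. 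You should either adopt that global mass-counting argument or supply the missing extension-of-sections step.
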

\begin{proof}
\emph{Step one: upper bounds.} We will start by showing the uniform
upper bound \ref{eq:uniform upper bound on bergman measure in proof main}
together with the following point-wise upper bound: 
\begin{equation}
\limsup_{k\rightarrow\infty}\beta_{k}(x)\leq(dd^{c}\phi)^{n}(x)\label{eq:pt-wise upper bound on bergman meas}
\end{equation}
at almost any point $x$ of $Y$ (recall that by assumption the r.h.s
above has a density which is well-defined almost everywhere on $X,$
so this statement indeed makes sense). The proof will be completely
local. Given any point $x_{0}\in X$ and local holomorphic coordinates
$z$ centered at $x_{0}$ we take a local trivializing section $s$
of $L$ such that $\phi$ is represented by a function $\phi(z)$
satisfying $\phi(0)=0.$ Any given holomorphic section of $L$ may,
locally, be written as $f(z)s$ for a local holomorphic function $f.$
In particular, the function $\log|f|^{2}$ is subharmonic and hence
by the sub-mean inequality for subharmonic functions we have 
\[
\log|f|^{2}(0)\leq\int\log|f|^{2}d\sigma_{r},
\]
 where $d\sigma_{r}$ denotes the invariant probability meaure on
the sphere $|z|=r.$ Writing $\log|f|^{2}=\log(|f|^{2}e^{-k\phi})+k\phi$
in the r.h.s above and applying Jensen's inequality gives 
\[
|f|^{2}(0)\exp(-\int k\phi d\sigma_{r})\leq\int|f|^{2}e^{-k\phi}d\sigma_{r}
\]
Accordingly, multiplying both sides with $r^{2n-1},$ integrating
over $r\in[0,Rk^{-1/2}]$ and performing the change of variables $r\mapsto rk^{1/2}$gives
\begin{equation}
|f|^{2}(0)\left(\int_{|z|\leq Rk^{-1/2}}|f|^{2}e^{-k\phi}dV\right)^{-1}\leq C_{R,k}:=\left(\int_{0}^{R}e^{-r^{2}a_{\phi}(rk^{-1/2})}r^{2n-1}dr\right)^{-1},\label{eq:def of constant ckr}
\end{equation}
where
$$
\,\,\, a_{\phi}(r)=\frac{1}{r^{2}}\int_{|z|=r}\phi d\sigma_{r}.
$$
We claim that 
\begin{equation}
(i)\,\left|a_{\phi}(r)\right|\leq C,\,\,\,\,(ii)\,\lim_{r\rightarrow0}a_{\phi}(r)=a_{\phi}(0)=\frac{1}{n}(\Delta\phi)(0)\label{eq:claim in proof bergman}
\end{equation}
where $C$ only depends on an upper bound on $\Delta\phi$ on $B(r):=\{|z|\leq r\}$
and where $(ii)$ holds if $0$ is a Lesbegue point for $\Delta\phi$.
(Recall that $0$ is Lesbegue point for an $L^{1}-$function $h$
if 
$$h(0)=\lim_{r\rightarrow0}\frac{1}{V(B(r))}\int_{|z|\leq r}hdV,
$$
where $V$ denotes the volume of the ball $B(r)$.) Accepting this
claim for the moment we can first set $R=1$ and deduce from $(i)$
that $\beta_{k}(x)$ is uniformly bounded on any compact subset $E.$
Moreover, to get the precise pointwise bound \ref{eq:pt-wise upper bound on bergman meas}
we assume that $x$ is a Lesbegue point for the components of the
current $(dd^{c}\phi)(x),$ i.e. that $0$ is a Lesbegue point for
for the $L_{loc}^{\infty}-$functions representing the distributional
partial deriviatives $\frac{\partial^{2}\phi}{\partial z_{i}\partial\bar{z_{j}}}$
The complement of the set of all such points $x$ is a null set for
Lesbegue measure (as follows from Lebesgue\textquoteright{}s theorem).

Letting $k\rightarrow\infty$ and applying the dominated convergence
theorem for $R$ fixed  gives, by computing the Gaussian integral,
\[
\lim_{R\rightarrow\infty}\lim_{k\rightarrow\infty}C_{R,k}=\left(\int_{0}^{\infty}e^{-r^{2}a_{\phi}(0)}r^{2n-1}dr\right)^{-1}=\frac{(a_{\phi}(0))^{n}}{\pi{}^{n}}
\]
Now recall that  $a_{\phi}(0)=\frac{1}{n}(\Delta\phi)(0)$, so what we  need to do is to  replace the Laplacian, i e the trace of $dd^c\phi$, by the determinant of the same form. For this we note that we can make an arbitrary linear change of variables in the coordinates $z$ without changing the Bergman kernel estimate, if the determinant of the change of variables equals 1. First we  change coordinates so that the Hessian of $\phi$ is diagonal at the origin. Then we apply a diagonal change of coordinates $z_j\to \mu_j z_j$ with determinant one. By the arithmetic-geometric mean inequality, the infimum 
$$
\inf_{\mu_j}(1/n) \sum \lambda_j\mu_j
$$
over all positive $\mu_j$ with product 1 equals $(\Pi \lambda_j)^{1/n}$, so taking the infimum over all such changes of coordinates we get that
$$
\limsup_{k\to \infty}\beta_k\leq det(\phi_{j, \bar k}).
$$
This concludes the proof of Step one up to the proof of $(i)$
and $(ii)$ in \ref{eq:claim in proof bergman} to which we next turn.

First note that in order to establish $(ii)$ it will be enough to
show that the limit $a_{\phi}(0)$ exists and only depends on $(\Delta\phi)(0).$
Indeed, we can then replace $\phi(z)$ with $\phi_{0}(z)=|z|^{2}$
and note that, by symmetry, $a_{\phi_{0}}(0)=1=\frac{1}{n}(\Delta\phi_{0})(0).$
Denote by $g(z)$  the standard spherical symmetric fundamental
solution for the corresponding local Euclidean Laplacian $\Delta:=\sum_{i}\frac{\partial^{2}}{\partial z_{i}\partial\bar{z_{i}}}$
satisfying 
\begin{equation}
g(1)=0,\,\,\,\frac{\partial}{\partial r}g(r)=c_{n}\frac{1}{r^{2n-1}}
\end{equation}
Using Green's formula and integration by parts gives

\[
c_n a_{\phi}(R)=R^{-2}\int_{|z|\leq R}(\Delta\phi)gdV=R^{-2}\int_{0}^{R}A_{\phi}(r)\frac{\partial}{\partial r}g(r)dr\label{eq:deriv of green f}\]
 where 
\[
A_{\phi}(r):=\int_{|z|\leq r}\Delta\phi dV
\]
In particular, since $\Delta\phi\leq C$ on $B(r)$ this proves $(i)$
in \ref{eq:claim in proof bergman}. Moreovever, if $0$ is a Lesbegue
point for $\Delta\phi$ then we get $A_{\phi}(r)=V(B_{1})r^{2n}(\Delta\phi)(0)+o(r^{2n})$
and hence, using formula \ref{eq:deriv of green f}, 
\[
a_{\phi}(R)=V(B_{1})(\Delta\phi)(0)R^{-2}\int_{0}^{R}r(1+o(1))dr\rightarrow\frac{1}{2}c_{n}V(B_{1})(\Delta\phi)(0),
\]
 as $R\rightarrow\infty.$ This shows that the limit $a_{\phi}(0)$
exists and only depends on $(\Delta\phi)(0),$ which proves $(ii)$
in \ref{eq:claim in proof bergman}.

\emph{Step two: convergence in total variation norm. }First note that
by the uniform and pointwise bounds on $\beta_{k}$ established in
the previous steps it will in order to prove the convergance in total
variation norm be enough to show that, for any compact subdomain $E$
of $Y$ 
\begin{equation}
\liminf_{k\rightarrow\infty}\int_{E}\beta_{k}\geq\int_{E}(dd^{c}\phi)^{n}\label{eq:lower bound for bergman meas}
\end{equation}
Indeed, writing $\beta_k=f_k dV$ and $(dd^c\phi)^n=f dV$ we get
$$
\|\beta_k-(dd^c\phi)^n\|=\int |f_k-f|dV=\int (f-f_k)dV +2\int(f-f_k)_{-},
$$
with $(f-f_k)_- = -\min( f-f_k,0)$. The limsup of the first integral is less than or equal to zero by  \ref{eq:lower bound for bergman meas}, and the limsup of the second integral is less than or equal to zero by Fatou's lemma (cf Lemma 2.2 in \cite{ber1}).

 Next we note that it will be enough to
consider the case when $Y$ is compact. Indeed, by assumption $(L,\phi)$
extends to a compact complex manifold $X$ (with the same hypothesis
on $\phi$ as on $Y$) and it follows immediately from the definition
of Bergman measures that 
\[
\beta_{k}\geq\beta_{k,X}
\]
where the right hand side is the Bergman measure defined with respect
to $(X,L,\phi).$ Hence, once we have established that the bound \ref{eq:lower bound for bergman meas}
holds for $\beta_{k,X}$ it will automatically hold for $\beta_{k}.$
Moreover, in the compact case of $X$ it will be enough to establish
 the bound \ref{eq:lower bound for bergman meas} for $E=X.$ Indeed,
as pointed out above it implies the convergence in total variation
norm on $X$ which in turn implies the lower bound \ref{eq:lower bound for bergman meas}
on $E$ for $\beta_{k,X}$ and hence the same lower bound on $E$
for $\beta_{k}.$

Finally, to prove the lower bound \ref{eq:lower bound for bergman meas}
for $X$ compact we can exploit that $H^{0}(X,kL+K_{X})$ is finite
dimensional. Indeed, by the Hilbert-Samuel formula, $\dim H^{0}(X,kL+K_{X})=k^{n}\int c_{1}(L)^{n}/n!+o(k^{n})$.
Moreover, by basic properties of Bergman kernels for finite dimensional
Hilbert spaces $\int_{X}\beta_{k,X}=\frac{n!}{k^{n}}\dim H^{0}(X,kL+K_{X})$
and hence  
\[
\lim_{k\rightarrow\infty}\int_{X}\beta_{k,X}=\int_{X}(dd^{c}\phi)^{n},
\]
 which, as pointed out above, concludes
the proof of the general convergence. 
\end{proof}
For our purposes it will be enough to consider the case when $Y$
is a Euclidean ball in $\C^{n}:$ 
\begin{cor}
\label{cor:Bergman}Let $\phi$ be a plurisubharmonic function defined on the
neighbourhood of $B_{1}$  such that $\Delta\phi\in L_{loc}^{\infty}$
and denote by $\beta_{k}$ the Bergman measure for the Hilbert space
of all holomorphic functions $f$ on \textbf{$B_{1}$} equipped with
the weighted $L^{2}-$norm \textup{$\int_{B_{1}}|f|^{2}e^{-k\phi}dV,$}
where $dV$ denotes Lesbegue measure. Then $\beta_{k}\leq C_{E}dV$
for any given compactly included subdomain $E$ of $B_{1}$ and, after
perhaps passing to a subsequence, \textup{
\[
\lim_{k\rightarrow\infty}\beta_{k}(x)=(dd^{c}\phi)^{n}(x)
\]
}\textup{\emph{ for almost any $x$ in $B_{1}.$}}\end{cor}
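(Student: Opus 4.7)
The plan is to reduce the corollary to an application of Theorem~\ref{Thm:bergmankernel as} by extending the local data on $B_1$ to a compactification. Embed $B_1 \subset \mathbb{C}^n \subset \mathbb{P}^n =: X$ and take $L := \mathcal{O}(d)$ for $d$ sufficiently large; via the standard trivialization of $L$ on the affine chart, the weighted Hilbert space of the corollary is identified with $H^0(B_1, K_{B_1}+kL)$ equipped with the $L^2$-norm induced by the local weight $\phi$, so that the corollary's $\beta_k$ matches the Bergman measure appearing in the theorem with $Y=B_1$, provided we can produce an appropriate global metric on $L$.

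The key technical step is to construct a global metric $\tilde\phi$ on $L$ over $X$, psh with curvature components in $L^\infty_\loc$, whose restriction to $B_1$ coincides with $\phi$. I would do this by gluing $\phi$ to a shifted Fubini--Study weight $\psi(z) := \tfrac{d}{2}\log(1+|z|^2)+T$ via a Demailly-style regularized max. For $d$ large and $T$ chosen in an admissible window, $\psi + 2\epsilon < \phi$ holds on $\overline{B_1}$ while $\psi - 2\epsilon > \phi$ on an annular region just outside $\overline{B_1}$ but inside the domain of $\phi$; such a $T$ exists because $\phi$ is bounded on $\overline{B_1}$ (elliptic regularity gives $\phi \in C^{1,\alpha}_\loc$ for every $\alpha<1$ from $\Delta\phi \in L^\infty_\loc$) and $d$ can be taken as large as needed. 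The regularized max $\tilde\phi := \max_\epsilon(\phi,\psi)$, extended by $\psi$ beyond the domain of $\phi$, then coincides with $\phi$ on a neighbourhood of $\overline{B_1}$, is globally psh on $X$, and has bounded Laplacian on all of $X$: the chain-rule expansion of $dd^c M_\epsilon(\phi,\psi)$ involves second derivatives of $M_\epsilon$ multiplied by cross-terms of the form $(\partial\phi)(\overline{\partial\phi})$, which are bounded precisely thanks to the $C^{1,\alpha}$ regularity of $\phi$.

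With this setup, Theorem~\ref{Thm:bergmankernel as} applied to $(X, L, \tilde\phi)$ with $Y=B_1$ delivers the uniform upper bound $\beta_k \leq C_E\, dV$ on any given compactly included subdomain $E \subset\subset B_1$, together with the convergence of $\beta_k$ to $(dd^c\tilde\phi)^n = (dd^c\phi)^n$ in total variation norm on $E$. Total variation convergence is $L^1(E,dV)$ convergence of the densities, so a standard subsequence argument yields a.e.\ pointwise convergence on $E$; exhausting $B_1$ by an increasing sequence $E_j$ of such subdomains and extracting a diagonal subsequence then produces a single sequence along which $\beta_k(x) \to (dd^c\phi)^n(x)$ for almost every $x \in B_1$.

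The main obstacle is the simultaneous regularity and agreement conditions on the glued weight: the naive hard max of $\phi$ and $\psi$ would introduce a singular curvature current supported along the shock locus $\{\phi = \psi\}$, which is not in $L^\infty$, so the regularization step is essential; conversely the regularization shifts the agreement region inward by an amount of order $\epsilon$, which is why it is important to arrange (by choosing $d$ large and $T$ suitably) that the shock lie strictly outside $\overline{B_1}$ rather than at $\partial B_1$. Once this is done, the verification that $dd^c\tilde\phi$ remains in $L^\infty_\loc$ across the shock is a direct chain-rule computation using the $C^{1,\alpha}$ regularity of $\phi$.
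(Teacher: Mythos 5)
Your proof is correct and follows essentially the same route as the paper: both reduce the corollary to Theorem \ref{Thm:bergmankernel as} by gluing $\phi$ to a global non-negatively curved weight on $\mathcal{O}(d)\rightarrow\P^{n}$ via a regularized max arranged so that the transition locus lies strictly outside $\overline{B_1}$, and then pass from total-variation convergence to a.e.\ convergence along a subsequence. The only (immaterial) differences are that you glue against the shifted Fubini--Study weight $\frac{d}{2}\log(1+|z|^2)+T$ where the paper uses $C\log|z|^2$ after normalizing $\phi\geq\delta>0$, and that you spell out the $L^{\infty}$ bound on the Laplacian of the regularized max, which the paper leaves implicit.
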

\begin{proof}
Taking $L$ to be the trivial holomorphic line bundle on $Y:=B_{1}$
it will be enough to show the extension property required by the previous
theorem. By assumption $\phi$ is in $\mathcal{H}_{1,1}(B_{1+\epsilon})$
and up to changing $\phi$ by a harmless additive constant we may
assume that $\phi\geq\delta>0$ on $B_{1+\delta}.$ Hence for $C$
sufficently large $\psi_{C}:=\max\{\phi,C\log|z|^{2}\}$ coincides
with $\phi$ on a neighbourhood of the closed unit-ball $B_{1}$ and
with $C\log|z|^{2}$ on $B_{1+\epsilon/2}.$ Moreover, the same property
holds when the max is replaced by a suitable regularized max ensuring
that $\psi_{C}$ is also in $\mathcal{H}_{1,1}(B_{1+\epsilon}).$
Finally, for $C$ a given positive integer we note that any function
coinciding with $C\log|z|^{2}$ on the complement of a given ball
$B_{R}$ centered at $0$ in $\C^{n}$ extends, in the standard way,
to define a metric on the $m$ th tensor power $\mathcal{O}(m)\rightarrow\P^{n}$
of the hyperplane line bundle on complex projective space, which is
smooth and of non-negative curvature on the complement of $B_{R}.$
This gives the required extension and concludes the proof since $L^{1}-$convergence implies almost everywhere convergence, after
passing to a subsequence. ( This reduction of a problem for local plurisubharmonic functions to a problem for global metrics on a line bundle was probably first used by Siu in \cite{Siu-Morse}). 
\end{proof}

\section{\label{sec:Convexity-of-the mab along chen}Convexity of the Mabuchi
functional along weak geodesics}

In this section will prove our main result, stated as Theorem \ref{thm:main intro}
in the introduction, using the convergence results for local Bergman
kernels proved in the previous section. We start by introducing some
notation. If $\omega$ is a K\"ahler form on $X$ then it induces a
metric $\psi_{\omega}$ on the anti-canonical line bundle $-K_{X}:=\Lambda^{n}TX$
for which we will use the suggestive notation 
\[
\psi_{\omega}=-\log(\omega^{n})
\]
 i.e. given local holomorphic coordinates $\psi_{\omega}$ is represented
by $-\log(\omega^{n}/idz_{1}\wedge d\bar{z}_{1}\wedge\cdots).$ More
generally, given a measure $\mu,$ absolutely continuous w.r.t Lebesgue
measure, we write $\psi_{\mu_{0}}$ for the corresponding metric on
$-K_{X}$ which, symbolically means that 
\[
\mu=e^{-\psi_{\mu}}
\]
 By definition $\mbox{Ric \ensuremath{\omega}}$ is the curvature
form of the metric $\psi_{\omega},$ i.e. $\mbox{Ric \ensuremath{\omega}}=dd^{c}\psi_{\omega}.$
The \emph{Mabuchi functional} $\mathcal{M}$ \cite{mab0} is, with
our normalization, the functional on $\mathcal{H}:=\mathcal{H}(X,\omega)$
implicitly defined by 
\begin{equation}
d\mathcal{M}_{|u}=-n\mbox{Ric}(\omega_{u})\wedge\omega_{u}^{n-1}+\bar{R}\omega_{u}^{n},\,\,\,\,\,\,\bar{R}:=\frac{nc_{1}(X)\cdot[\omega]^{n-1}}{[\omega]^{n}},\label{eq:defining prop of mab in text}
\end{equation}
 where $d\mathcal{F}_{|u}$ denotes the differential at $\phi$ of
a given functional $\mathcal{F}$ on the $\mathcal{H},$ i.e. the
measure defined by the following property: for any $v\in C^{\infty}(X)$
\[
\left\langle d\mathcal{F}_{|u},v\right\rangle =\frac{d}{dt}_{|t=0}\mathcal{F}(u_{t}),
\]
 where $u_{t}$ is any smooth curve in $\mathcal{H}$ such that $\frac{d}{dt}_{|t=0}u_{t}=v$
(assuming that the measure $d\mathcal{F}_{|u}$ exists). Given a curve
$u_{t}$ in $\mathcal{H}$ we will identify it with a function $U$
on $X\times D,$ for $D$ an annulus in $\C$ (compare section \ref{sec:Regularity-of-weak}). 

The starting point of the proof of Theorem \ref{thm:main intro} is the
explicit formula for the Mabuchi functional in \cite{c1}, which 
has an ``energy part'' and an ``entropy part''. As there are many
different notations (and normalizations) for the energy type functionals
in question we start by introducing our notation. Given a metric $\phi$
as above we will write 
\begin{equation}
\mathcal{E}(u):=\int_{X}\sum_{j=0}^{n}u\omega_{u}^{n-j}\wedge\omega_{0}^{j}\label{eq:def of energy of phi}
\end{equation}
Similarly, given a closed $(1,1)-$form (or current) $T$ we
set

\begin{equation}
\mathcal{E}^{T}(u):=\int_{X}u\sum_{j=0}^{n-1}\omega_{u}^{n-j-1}\wedge\omega_{0}^{j}\wedge T\label{eq:def of T-energy of phi}
\end{equation}
A standard computation shows that the corresponding differentials
are given by: 
\begin{equation}
d\mathcal{E}_{|u}=(n+1)\omega_{u}^{n},\,\,\, d\mathcal{E}_{|\phi}^{T}=n\omega_{u}^{n-1}\wedge T.\label{eq:differentials of energy and T-en}
\end{equation}
Similarly, the second order variations are given by:
\begin{equation}
d_{\tau}d_{\tau}^{c}\mathcal{E}(u_{\tau})=\int_{X}(\pi^{*}\omega+dd^{c}U)^{n+1},\,\,\, d_{\tau}d_{\tau}^{c}\mathcal{E}^{T}(\phi_{\tau})=\int_{X}(\pi^{*}\omega+dd^{c}U)^{n}\wedge\pi^{*}T,\label{eq:second order differentials of energy and T-energy}
\end{equation}
where $\int_{X}$ denotes the fiber-wise integral, i.e. the push-forward
map induced by the natural projection $\pi$ from $X\times D$ to
$X.$ Finally, we recall that the \emph{entropy} of a measure $\mu$
relative to a reference measure $\mu_{0}$ is defined as follows if $\mu$ is absolutely continuous with respect to $\mu_0$:
\begin{equation}
H_{\mu_{0}}(\mu):=\int_{X}\log\left(\frac{d\mu}{d\mu_{0}}\right)d\mu\label{eq:def of entr in smooth case}
\end{equation}
There is a well known interpretation of the entropy functional as a Legendre transform that we will have use for at several occasions later on, see \cite{Legendre-ref}.
\begin{prop}If $\mu_0$ and $\mu$ are  probability measures on $X$ such that $\mu$ is absolutely continuous with respect to $\mu_0$,  then
$$
H_{\mu_{0}}(\mu)=\sup_f \int_X fd\mu -\log\int_X e^f d\mu_0,
$$
where the supremum is taken over all continuous functions on $X$.
\end{prop}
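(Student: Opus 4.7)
The plan is to establish both inequalities separately, viewing the identity as the Legendre duality between the relative entropy $H_{\mu_0}(\cdot)$ and the log-moment-generating functional $f\mapsto \log\int_X e^f\,d\mu_0$.

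For the upper bound, I would apply the Young-type inequality $xy\leq x\log x-x+e^y$, valid for all $x\geq 0$ and $y\in\R$, which is the pointwise Legendre duality between the convex functions $x\log x-x$ and $e^y$. Writing $g=d\mu/d\mu_0$ and $Z=\int_X e^f\,d\mu_0$ for a continuous test function $f$, apply the inequality with $x=g$ and $y=f-\log Z$ and integrate against $\mu_0$:
\[
\int_X f\,d\mu-\log Z-H_{\mu_0}(\mu)+1\;\leq\;\int_X e^{f-\log Z}\,d\mu_0=1,
\]
which rearranges to $\int_X f\,d\mu-\log Z\leq H_{\mu_0}(\mu)$. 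An equivalent derivation applies Jensen's inequality to the probability measure $d\nu=Z^{-1}e^f\,d\mu_0$, writing $\int f\,d\mu-\log Z=\int\log(d\nu/d\mu_0)\,d\mu=H_{\mu_0}(\mu)+\int\log(d\nu/d\mu)\,d\mu$ and bounding the last term by $\log\int(d\nu/d\mu)\,d\mu\leq 0$.

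For the matching lower bound, the formal maximizer is $f^{\star}=\log g$, for which $\int_X f^{\star}\,d\mu=H_{\mu_0}(\mu)$ and $\int_X e^{f^{\star}}\,d\mu_0=\int_X g\,d\mu_0=1$, so equality is achieved in the bound along this choice. The obstruction is that $f^{\star}$ is neither bounded nor continuous, so one must approximate. A two-step approximation does the job: first truncate to $f_M=\min\{M,\max\{-M,\log g\}\}$ on $\{g>0\}$ (extended by $-M$ on $\{g=0\}$), then use Lusin's theorem (equivalently, density of $C(X)$ in $L^{1}(\mu+\mu_0)$) to approximate each $f_M$ by continuous functions $f_{M,k}$ with $\|f_{M,k}\|_\infty\leq M$. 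Uniform boundedness of $e^{f_{M,k}}$ by $e^M$ ensures via dominated convergence that $\int e^{f_{M,k}}\,d\mu_0\to\int e^{f_M}\,d\mu_0$ and $\int f_{M,k}\,d\mu\to\int f_M\,d\mu$. Sending $M\to\infty$: when $H_{\mu_0}(\mu)<\infty$, the elementary bound $x\log x\geq-1/e$ shows $(g\log g)_-\in L^1(\mu_0)$, hence $(\log g)_-\in L^1(\mu)$; combined with $(\log g)_+\in L^1(\mu)$ coming from $H_{\mu_0}(\mu)<\infty$, dominated convergence yields $\int f_M\,d\mu\to H_{\mu_0}(\mu)$ and $\int e^{f_M}\,d\mu_0\to 1$. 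The case $H_{\mu_0}(\mu)=+\infty$ is handled by the same truncation: $\int f_M\,d\mu\to+\infty$ while $\int e^{f_M}\,d\mu_0$ remains bounded, forcing the supremum to diverge as well.

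The main obstacle is precisely the restriction to continuous test functions: bounded Borel functions would suffice immediately, and the substance of the proof lies in showing that the less flexible class $C(X)$ still attains the supremum. Tracking the convergence of $\int e^{f_{M,k}}\,d\mu_0$ is the delicate point, and is exactly why one truncates $f^{\star}$ before smoothing — without truncation, uniform control of the exponentials fails and dominated convergence cannot be applied.
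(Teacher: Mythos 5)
Your proof is correct and follows essentially the same route as the paper: the upper bound is the Jensen/Fenchel--Young duality (your ``equivalent derivation'' is literally the paper's one-line Jensen argument), and the lower bound is obtained by approximating $\log(d\mu/d\mu_0)$ by continuous functions, which the paper dispatches in a single sentence. Your truncation-plus-Lusin argument, with the $x\log x\geq -1/e$ bound to control the negative part and the separate treatment of the infinite-entropy case, supplies exactly the details the paper omits.
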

\begin{proof} 
First note that Jensen's inequality gives
$$
\exp\int_X (f-\log(d\mu/d\mu_0))d\mu\leq\int_X e^fd\mu_0.
$$
Taking logarithms and rearranging this gives the $\geq$ direction of the inequality. The other direction follows by approximating $\log(d\mu/d\mu_0)$ by continuous functions $f$.
\end{proof}
For future use we record two immediate consequences of this: The entropy is a convex function of the measure $\mu$ for the natural affine structure on the space of probability measures. Second, as the supremum of a set of continuous functions, the entropy is lower semicontinuous with respect to the weak*-topology.

Now we can state the explicit formula in \cite{c1}, written in our
notation, for the Mabuchi functional $\mathcal{M}$ implicitly defined
(up to an additive constant) by formula \ref{eq:defining prop of mab in text}.
\begin{prop}
\label{prop:form for mab}Given a K\"ahler metric $\omega_{0}$ on $X$
with volume form $\mu_{0}:=\omega_{0}^{n}$ of total mass $[\omega]^{n}$
the following formula holds for the Mabuchi functional on the corresponding
space $\mathcal{H}$ of all K\"ahler potentials: 
\begin{equation}
\mathcal{M}(u)=\left(\frac{\bar{R}}{n+1}\mathcal{E}(u)-\mathcal{E}^{\mbox{Ric}\ensuremath{\omega_{0}}}(u)\right)+H_{\mu_{0}}(\omega_{u}^{n}),\,\,\,\,\,\bar{R}:=\frac{nc_{1}(X)\cdot[\omega_{0}]^{n-1}}{[\omega_{0}]^{n}}\label{eq:formula for mab}
\end{equation}
\end{prop}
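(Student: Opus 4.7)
The plan is to verify that the differential at every $u\in\mathcal{H}$ of the right hand side of \eqref{eq:formula for mab} equals the measure
\[
d\mathcal{M}_{|u} = -n\,\mbox{Ric}(\omega_u)\wedge\omega_u^{n-1} + \bar R\,\omega_u^n
\]
prescribed by \eqref{eq:defining prop of mab in text}. Because \eqref{eq:defining prop of mab in text} determines $\mathcal{M}$ only up to an additive constant and $\mathcal{H}$ is connected, matching the differentials is sufficient; the additive constant is absorbed in the normalization of $\mathcal{M}$.

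The two energy terms are handled directly by the first variation formulas already collected in \eqref{eq:differentials of energy and T-en}. Taking $T = \mbox{Ric}(\omega_0)$ in the second of those, one gets
\[
d\Bigl(\frac{\bar R}{n+1}\mathcal{E}(u) - \mathcal{E}^{\mbox{Ric}\,\omega_0}(u)\Bigr)_{|u} = \bar R\,\omega_u^n - n\,\omega_u^{n-1}\wedge\mbox{Ric}(\omega_0).
\]

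The substantial step is the first variation of the entropy $H_{\mu_0}(\omega_u^n) = \int_X \log(\omega_u^n/\omega_0^n)\,\omega_u^n$. Setting $u_s := u + sv$ for $v\in C^\infty(X)$ and using $\frac{d}{ds}\big|_{s=0}\omega_{u_s}^n = n\,dd^c v\wedge\omega_u^{n-1}$, differentiating under the integral splits the result into two contributions,
\[
\int_X n\,dd^c v\wedge\omega_u^{n-1} \;+\; \int_X \log(\omega_u^n/\omega_0^n)\cdot n\,dd^c v\wedge\omega_u^{n-1}.
\]
The first integral vanishes by Stokes' theorem, since $dd^c v\wedge\omega_u^{n-1} = d(d^c v\wedge\omega_u^{n-1})$ and $X$ is compact without boundary. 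In the second, because $u\in\mathcal{H}$ the density $\omega_u^n/\omega_0^n$ is smooth and strictly positive, so one may integrate by parts twice (equivalently, use the self-adjointness of $dd^c$ against the closed form $\omega_u^{n-1}$) to rewrite it as $\int_X v\cdot n\,dd^c\log(\omega_u^n/\omega_0^n)\wedge\omega_u^{n-1}$. With the sign convention of the paper, $\mbox{Ric}(\omega) = dd^c\psi_\omega = -dd^c\log\omega^n$, this equals $\int_X v\cdot n\bigl(\mbox{Ric}(\omega_0) - \mbox{Ric}(\omega_u)\bigr)\wedge\omega_u^{n-1}$, yielding
\[
dH_{\mu_0}(\omega_u^n)_{|u} = n\bigl(\mbox{Ric}(\omega_0) - \mbox{Ric}(\omega_u)\bigr)\wedge\omega_u^{n-1}.
\]

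Adding the three differentials, the two $\mbox{Ric}(\omega_0)$ contributions cancel and the total becomes exactly $\bar R\,\omega_u^n - n\,\mbox{Ric}(\omega_u)\wedge\omega_u^{n-1} = d\mathcal{M}_{|u}$. The entire computation lives on the smooth locus $\mathcal{H}$, so there is no real obstacle: the only points to be careful about are keeping the sign convention $\mbox{Ric}(\omega) = -dd^c\log\omega^n$ straight and invoking compactness of $X$ to discard boundary contributions in the two applications of Stokes' theorem.
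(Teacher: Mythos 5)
Your proposal is correct and follows essentially the same route as the paper: both verify the identity by differentiating the right-hand side, using the first variation formulas \ref{eq:differentials of energy and T-en} for the energy terms and a single integration by parts in the entropy term (together with the sign convention $\mbox{Ric}\,\omega=-dd^{c}\log\omega^{n}$) so that the two $\mbox{Ric}\,\omega_{0}$ contributions cancel. The only cosmetic difference is that you discard the first entropy contribution by Stokes' theorem applied to $dd^{c}v\wedge\omega_{u}^{n-1}$, while the paper phrases the same fact as the constancy of the total mass of $\omega_{u_{t}}^{n}$.
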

\begin{proof}
For completeness and as a way to check our normalizations we recall
the proof. A direct calculation gives 
\[
\frac{d}{dt}H_{\mu_{0}}(\omega_{u_{t}}^{n})=0+\int\log\frac{\omega_{u_{t}}^{n}}{\omega_{0}^{n}}\frac{d\omega_{u_{t}}^{n}}{dt}=-n\int_{X}\frac{du_{t}}{dt}\mbox{Ric}\omega_{u_{t}}\wedge\omega_{u_{t}}^{n-1}+n\int_{X}\frac{du_{t}}{dt}\mbox{Ric}\omega_{0}\wedge\omega_{u_{t}}^{n-1}
\]
(using, in the first equality, that $\omega_{0}^{n}$ has the same
mass as $\omega_{u_{t}}^{n}$ and, in the second equality, one integration
by parts). Hence, since $d\mathcal{E}_{|u}^{T}=nT\wedge\omega_{u}^{n-1}$
(formula \ref{eq:differentials of energy and T-en}) we get $d(H_{\mu_{0}}-\mathcal{E}^{\mbox{Ric}\ensuremath{\omega_{0}}})=-n\mbox{Ric}\omega_{u}\wedge\omega_{u}^{n-1},$
which coincides with the first term in the defining expression for
$d\mathcal{M}_{|u}$ (formula \ref{eq:defining prop of mab in text}).
Finally, since $d\mathcal{E}_{|u}=(n+1)\omega_{u}^{n}$ (formula \ref{eq:differentials of energy and T-en})
this shows that the differential of the functional defined by the
r.h.s in formula \ref{eq:formula for mab} has the desired property.
\end{proof}
Following Chen \cite{c1} we now extend the functional $\mathcal{M}$
from $\mathcal{H}$ to the space $\mathcal{H}_{1,1}$ of all $u$
such that $\omega+dd^{c}u$ is a positive current with $L^{\infty}-$coefficients,
using the formula in the previous proposition. Theorem \ref{thm:main intro} claims that this functional is convex along weak geodesics.

 It is not a priori clear that the functional is continuous along weak geodesics. (We thank S\'ebastien Boucksom and  Mihai P\u aun
  for pointing this out to us.) It does follow from pluripotential theory that the energy parts of the formula  are continuous since the potential varies continuously from fiber to fiber. The entropy part however is only known to be lower semicontinuous.  Therefore we will first state the basic result concerning distributional derivatives, and then show  the required continuity in our setting afterwards. In the theorem below we say that a function $v$  of one complex variable is {\it weakly subharmonic} if $\partial\bar\partial v\geq 0$ in the sense of currents. Similarily, we say that a function of one real variable is {\it weakly convex} if its second derivative in the sense of distributions is nonnegative. 
\begin{thm}
\label{thm:main text} Let $u_{\tau}$ be a family of functions in
$PSH(X,\omega)$ such that $\omega+dd^{c}u$ is a locally bounded
current, \textup{$\pi^{*}\omega+dd^{c}U\geq0$ and $(\pi^{*}\omega+dd^{c}U)^{n+1}=0$
on $X\times D.$} Then the Mabuchi functional $\mathcal{M}(u_{\tau})$
is weakly subharmonic with respect to $\tau\in D.$\textup{\emph{ In particular,
$\mathcal{M}(u_{t})$ is weakly convex along the weak geodesic $u_{t}$ connecting
any two given points in $\mathcal{H}(X,\omega).$}}\end{thm}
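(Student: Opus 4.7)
My plan is to reduce the theorem, via the energy--entropy decomposition in Proposition \ref{prop:form for mab}, to proving positivity of a certain $(n+1,n+1)$-current on the product $M = X \times D$. Since the fiber-wise integral $\int_X$ maps positive currents on $M$ to positive currents on $D$, and a positive $(1,1)$-current on $D$ is precisely $dd^c$ of a subharmonic function, this yields the weakly subharmonic conclusion for $\mathcal{M}(u_\tau)$.

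First I would compute $d_\tau d_\tau^c \mathcal{M}(u_\tau)$ in the distributional sense. By formula \ref{eq:second order differentials of energy and T-energy} the contribution of $\tfrac{\bar R}{n+1}\mathcal{E}$ vanishes because $(\pi^*\omega + dd^c U)^{n+1} = 0$. Combining the second-order variation of $-\mathcal{E}^{\mbox{Ric}\,\omega_0}$ with the second variation of the entropy $H_{\mu_0}(\omega_{u_\tau}^n)$ (differentiating once more the identity used in the proof of Proposition \ref{prop:form for mab}) packages the result as
\[
d_\tau d_\tau^c \mathcal{M}(u_\tau) \;=\; \int_X T, \qquad T \;:=\; dd^c \Psi \wedge (\pi^*\omega + dd^c U)^n,
\]
where $\Psi := \log \omega_{u_\tau}^n$ is to be interpreted as the local weight of the induced metric on the relative canonical bundle $K_{M/D}$. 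The task is then to show $T \geq 0$ as a current on $M$. Since this is local, I pass to a coordinate ball $B_1 \subset \C^n$ in $X$ and write $\pi^*\omega + dd^c U = dd^c \Phi$ for a plurisubharmonic $\Phi(\tau, z) = \phi_\tau(z)$; then $\omega_{u_\tau}^n = (dd^c\phi_\tau)^n$ locally.

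Now I apply Corollary \ref{cor:Bergman} to the family $\phi_\tau$: setting $\beta_k = (n!/k^n) K_{k\phi_\tau} e^{-k\phi_\tau}$, the measures $\beta_k$ are uniformly dominated by a fixed volume form and converge almost everywhere to $(dd^c\phi_\tau)^n$. Taking $dd^c$ jointly in $(\tau, z)$ in the identity $\log \beta_k = \log K_{k\phi_\tau} - k\Phi + \mathrm{const}$ yields
\[
dd^c \log \beta_k \;=\; dd^c \log K_{k\phi_\tau} - k\, dd^c \Phi.
\]
The crucial input from \cite{bern00} is that $\log K_{k\phi_\tau}$ is plurisubharmonic in $(\tau,z)$ jointly, so $dd^c \log K_{k\phi_\tau} \geq 0$. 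Wedging with $(dd^c\Phi)^n$ annihilates the second term by $(dd^c\Phi)^{n+1} = 0$, whence
\[
T_k \;:=\; dd^c \log \beta_k \wedge (dd^c\Phi)^n \;=\; dd^c \log K_{k\phi_\tau} \wedge (dd^c\Phi)^n \;\geq\; 0.
\]

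The final step is to pass to the limit $k \to \infty$. By Corollary \ref{cor:Bergman}, $\beta_k \to (dd^c\phi_\tau)^n$ in $L^1_{\loc}$, so $\log \beta_k \to \Psi$ in measure; combined with the fact that $(dd^c\Phi)^n$ is a fixed current of bounded potential, this should give $T_k \to T$ weakly as currents, preserving positivity. The main obstacle I anticipate is controlling the set $\{\omega_{u_\tau}^n = 0\}$ where $\Psi = -\infty$: convergence of $\log \beta_k$ to $\Psi$ in a strong enough sense requires a truncation $\Psi_M := \max(\Psi, -M)$, proving positivity of the corresponding currents first and then letting $M \to \infty$ monotonically. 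The continuity issue flagged in the introduction, namely that the entropy part of $\mathcal{M}$ is only lower semicontinuous along weak geodesics, does not affect the statement: weak subharmonicity only requires $d_\tau d_\tau^c \mathcal{M} \geq 0$ as a distribution, which is exactly what the above argument would deliver.
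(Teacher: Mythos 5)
Your overall architecture coincides with the paper's: reduce to $d_\tau d_\tau^c\mathcal{M}(u_\tau)=\int_X dd^c\Psi\wedge(\pi^*\omega+dd^cU)^n$, localize, approximate $\Psi=\log\omega_{u_\tau}^n$ by $\log\beta_k$, and use the plurisubharmonic variation of Bergman kernels together with $(dd^c\Phi)^{n+1}=0$ to get positivity of $T_k$. That part is sound and is exactly what the paper does.

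The genuine gap is in your truncation step. You propose $\Psi_M:=\max(\Psi,-M)$, but $\Psi$ is a metric weight on $K_{M/D}$, not a global function, so a constant truncation level is either not globally defined (if imposed chart by chart, the local cutoffs disagree on overlaps and the fiber integral $\int_X T_M$ no longer computes $dd^c$ of a truncated entropy), or, if you interpret it as truncating the global density $\log(\omega_{u_\tau}^n/\omega_0^n)$ at $-M$, the positivity argument breaks: on the set where the max equals the second branch one gets $dd^c(\pi^*\log\omega_0^n)\wedge(dd^c\Phi)^n=-\pi^*\mathrm{Ric}\,\omega_0\wedge(dd^c\Phi)^n$, which has no sign, and $\pi^*\omega_0$ cannot be dominated by a multiple of $dd^c\Phi$ since $\omega_{u_\tau}$ may degenerate along a weak geodesic. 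The key point of the argument is that \emph{both} branches of the max must have $dd^c\geq -k\,dd^c\Phi$ so that the geodesic equation kills the error after wedging with $(dd^c\Phi)^n$. The paper achieves this by truncating against $\chi-A$ with $\chi:=\pi^*\chi_0-k_0U$ ($\chi_0$ a smooth metric on $K_X$ with $dd^c\chi_0+k_0\omega_0\geq0$), which gives the global, continuous comparison metric with $dd^c\chi\geq-k_0(\pi^*\omega_0+dd^cU)$; this construction, which re-uses the geodesic potential $U$ itself, is the missing idea in your sketch. A secondary, fixable omission: the identity $dd^cf^\Psi=\int_X T$ for a merely bounded singular metric $\Psi$ (with $T$ defined by throwing $dd^c$ onto the test function) needs justification, which the paper supplies by approximating $\Psi$ by uniformly bounded smooth metrics $\Psi_j$ and passing to the limit by dominated convergence.
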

\begin{proof}
Let $\Psi=\Psi(\tau,x)=\psi_\tau(x)$ be a locally bounded singular metric on the relative canonical
line bundle $K_{M/D}$ and denote by $f^{\Psi}(\tau)$ the following
function on $D$ attached to $\Psi:$ 
\[
f^{\Psi}(\tau):=\left(\frac{\bar{R}}{n+1}\mathcal{E}(u_{\tau})-\mathcal{E}^{\mbox{Ric}\ensuremath{\omega_{0}}}(u_{\tau})\right)+\int_{X}\log(\frac{e^{\psi_{\tau}}}{\omega_{0}^{n}})\omega_{u_{\tau}}^{n}
\]
(the definition is made so that $f^{\Psi}(\tau)=\mathcal{M}(u_{\tau})$
if $\Psi$ is the (unbounded) metric defined by $\omega_{u_{\tau}}^{n}).$
Then we claim that 
\begin{equation}
dd^{c}f^{\Psi}(\tau)=\int_{X}T,\,\,\,\,\, T:=dd^{c}(\Psi\wedge\left(\pi^{*}\omega_0+dd^{c}U\right)^{n})\label{eq:push-forward formula in proof thm main text}
\end{equation}
 where $T$ is defined as an $(n+1,n+1)$ current (distribution),
which a priori may not be of order zero. More precisely, for a local
smooth test function $v$ supported on a local coordinate neighborhood
$V\subset M$ the current $T$ is locally defined by
\[
\left\langle T,v\right\rangle =\int\Psi_{V}\left(\pi^{*}\omega_0+dd^{c}U\right)^{n}\wedge dd^{c}v,
\]
 where $\Psi_{V}$ is a local function representing the metric $\Psi$
on $K_{M/D}$ (given a local trivialization of $K_{M/D}).$ To prove
formula \ref{eq:push-forward formula in proof thm main text} take
a sequence $\Psi_{j}$ of uniformly bounded smooth metrics such that
$\Psi_{j}\rightarrow\Psi$ almost everywhere on $X$ for every $\tau$ (which may be
constructed using local convolution and a partition of the unity).
Then a direct calculation (using formula \ref{eq:second order differentials of energy and T-energy})
gives 
\begin{equation}
dd^{c}f^{\Psi_{j}}(\tau)=\eta_{j}:=\int_{X}T_{j},\,\,\,\, T_{j}:=dd^{c}(\Psi_{j}\wedge\left(\pi^{*}\omega_0+dd^{c}U\right)^{n})\label{eq:proof of push-forward form in proof}
\end{equation}
 By the dominated convergence theorem $\eta_{j}\rightarrow\eta:=\int_{X}T$
weakly on $D$ (in the sense of distributions). Moreover, by the dominated
convergence theorem $f^{\Psi_{j}}(\tau)\rightarrow f^{\Psi}(\tau)$
pointwise on $D,$ in a dominated manner and hence, since the linear
operator $dd^{c}$ is continuous under such convergence the desired
formula \ref{eq:push-forward formula in proof thm main text} follows
from formula \ref{eq:proof of push-forward form in proof}.

We want to apply these considerations to $\Psi=\log(\omega_0 +dd^c_XU)^n$, but we cannot do so immediately since this metric is not locally bounded. For this reason we next introduce a truncation in the following way.
For a fixed positive number $A,$ we define 
\[
\Psi_{A}:=\max\{\log\left(\omega_0+dd^{c}_{X}u_{\tau}\right)^n,\chi-A\}
\]
 where $\chi$ denotes a suitable fixed continuous metric on $K_{M/D},$
to be constructed below. We claim that the current
$$
T_{A}:=dd^{c}\Psi_{A}\wedge\left(\pi^{*}\omega_0+dd^{c}U\right)^{n}
$$
satisfies 
$T_{A}\geq0,$ i.e. is defined by a positive measure, if $\chi$ is chosen to be continuous and such that
\[
dd^{c}\chi\geq-k_{0}(\pi^{*}\omega_0+dd^{c}U)
\]
 for some positive integer $k_{0}$.
As explained above this will imply that 
\[
f^{\Psi_{A}}(\tau):=\left(\frac{\bar{R}}{n+1}\mathcal{E}(u_{\tau})-\mathcal{E}^{\mbox{Ric}\ensuremath{\omega_{0}}}(u_{\tau})\right)+\int_{X}\log(\max\left\{ \frac{\omega_{u}^{n}}{\omega_{0}^{n}},\frac{\chi-A}{\omega_{0}^{n}}\right\} \omega_{u_{\tau}}^{n}
\]
is subharmonic for any $A>0$. Letting $A\rightarrow\infty$
and invoking the dominated convergence theorem we get $f^{\Psi_{A}}(\tau)\rightarrow\mathcal{M}(u_{\tau})$
which will conclude the proof of the theorem.

To construct $\chi$ we first let
$\chi_{0}$ be an arbitrary smooth metric on $K_{X}$. Then  we  set $\chi:=\pi^{*}\chi_{0}-k_{0}U$
where $k_{0}$ is sufficiently large to ensure that $dd^{c}\chi_{0}+k_{0}\omega_{0}\geq0$. Then
$$
dd^c\chi= \pi^*dd^c \chi_0 -k_0(\pi^*\omega_0+dd^c U) +k_0 \pi^*\omega_0\geq
 -k_0(\pi^*\omega_0+dd^c U),
$$
so $\chi$ fulfills our requirement.

Now, the claim that $T_{A}\geq0$ is a local statement. Accordingly,
we locally write 
\[
\pi^{*}\omega_0+dd^{c}U=dd^{c}\Phi
\]
for a local psh function $\Phi$ on $M$ and write $\phi_{\tau}=\Phi(\cdot,\tau).$
Our proof proceeds by a local approximation argument involving the
local Bergman measures $\beta_{k\phi_{\tau}}$ (that we identify with
their density) for the Hilbert space of all holomorphic functions
on the unit-ball in Euclidean $\C^{n}$ equipped with the weight $k\phi_{\tau};$
see Section \ref{sub:Bergman-kernel-asymptotics}. More precisely,
consider the following local current: 
\[
T_{A,k}:=dd^{c}\Psi_{A,k}\wedge(dd^{c}\Phi)^{n},\,\,\,\,\,\Psi_{A,k}:=\max\{\log\beta_{k},\chi-A\}
\]
 By Prop \ref{Thm:bergmankernel as} and the dominated convergence
theorem 
\[
\lim_{k\rightarrow\infty}T_{k,A}=T_{A}
\]
in the local weak topology of currents. Thus, to prove that $T_{A}\geq0$
it will be enough to prove that the locally defined $(n+1,n+1)-$current
$T_{k,A}$ is a positive measure. To fix ideas we first observe that
the following current is positive: 
\[
T_{k}:=dd^{c}\Psi_{k}\wedge(dd^{c}\Phi)^{n},\,\,\,\,\,\Psi_{k}:=\log(\beta_{k})
\]
(which formally corresponds to the case $A=\infty).$ Indeed, by the
 results on plurisubharmonic variation of Bergman kernels in \cite{bern00} $dd^{c}\log K_{k\phi_{t}}\geq0$
on $X\times A$ and hence 
\begin{equation}
dd^{c}\log\beta_{k}\geq-kdd^{c}\Phi\label{eq:lower bound on ddc of log of bergm measure}
\end{equation}
As a consequence, 
\[
T_{k}:=dd^{c}\log\beta_{k}\wedge(dd^{c}\Phi)^{n}\geq-k(dd^{c}\Phi)\wedge(dd^{c}\Phi)^{n}=0,
\]
using the geodesic equation \ref{eq:ma eq in section reg} in the
last equality. Moving to the case when $A\neq\infty$ we note that,
by construction, $\Psi_{A,k}$ is the max of two local functions whose
curvature forms are bounded from below by $-kdd^{c}\Phi$ (for $k\geq k_{0})$
and hence $\Psi_{A,k}$ also satisfies 
\begin{equation}
dd^{c}\Psi_{A,k}\geq-kdd^{c}\Phi\label{eq:lower bound on ddc of truncated metric}
\end{equation}
Now arguing precisely as above (and using the inequality \ref{eq:lower bound on ddc of truncated metric})
we see that $T_{k,A}\geq0.$ Moreover, by Corollary \ref{cor:Bergman}
\[
e^{\Psi_{A,k}}:=\max\{\frac{n!}{k^{n}}K_{k\phi_{t}}e^{-k\phi_{t}},e^{-(\chi-A)}\}\rightarrow\max\{MA(\phi),e^{-(\chi-A)}\},
\]
 as $k\rightarrow\infty$ pointwise almost everywhere on $X$ and for every $\tau$ in a dominated
fashion (after passing to a subsequence with respect to $k).$ Hence,
invoking the dominated convergence theorem gives the following local
weak convergence: 
\[
\lim_{k\rightarrow\infty}T_{k,A}=T_{A}
\]
In particular, this shows that $T_{A}\geq0$ and as explained above
this concludes the proof of the theorem.
\end{proof}
Before going on to prove the continuity of the Mabuchi functional we point out that the previous proof simplifies somewhat in case the cohomology class of $\omega$ is integral. Then we can write 
$$
\omega_0 +dd^c u_\tau=dd^c\phi_\tau,
$$
where $\phi_\tau$ is for each $\tau$ the weight of a metric on a positive line bundle $L$. We can then consider the Bergman kernels for the spaces $H^0(X, K_X+kL)$, induced by the metrics $k\phi_\tau$, (instead of the local Bergman kernels that we used in the proof for the general case) and their Bergman measures 
$$
\beta_{k \tau}= K_{k\phi_\tau} e^{-k\phi_\tau}k^{-n}.
$$
We define 
$$
\Psi_A=\max\{\log(dd^c\phi_\tau^n), \chi -A\}
$$
and 
$$
\Psi_{A,k}=\max\{\log(\beta_{k \tau}), \chi-A\}
$$
and use  these metrics on the relative canonical bundle $K_{M/D}$ to define  functions $f^{\Psi_A}(\tau)$   and $f^{\Psi_{A, k}}(\tau)$ as in the very beginning of the proof. We then get that  pointwise $f^{\Psi_{A, k}}$ tends  to  $f^{\Psi_A}(\tau)$ as $k\to \infty$ and that  $f^{\Psi_A}(\tau)$ tends to $\mathcal{M}(\tau)$ as $A\to \infty$ . Moreover  $f^{\Psi_{A, k}}(\tau)$ is subharmonic by the same argument as before and it follows that $\mathcal{M}$ is at least weakly subharmonic. We will have use for this remark in the proof of the continuity.

\begin{thm}
\label{thm:continuity text} $\mathcal{M}$ is continuous along weak geodesics and therefore convex in the pointwise sense.
\end{thm}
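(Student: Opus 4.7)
My plan is to prove continuity by establishing upper and lower semicontinuity separately; combining continuity with the weak subharmonicity from Theorem \ref{thm:main text} and the $S^{1}$-invariance of the weak geodesic will then upgrade weak to pointwise convexity. Writing $\mathcal{M}(u_{\tau}) = \mathcal{E}_{\mathrm{tot}}(u_{\tau}) + H_{\mu_{0}}(\omega_{u_{\tau}}^{n})$ as in Proposition \ref{prop:form for mab}, the energy part is continuous by standard pluripotential theory, because the $\mathcal{C}^{1,1}_{\mathbb{C}}$-regularity of the weak geodesic forces $u_{\tau}$ to vary continuously in the uniform topology on $X$. The whole problem therefore reduces to continuity of the entropy $\tau \mapsto H_{\mu_{0}}(\omega_{u_{\tau}}^{n})$.

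Lower semicontinuity of the entropy is immediate from the Legendre transform representation proved just before Proposition \ref{prop:form for mab}, which expresses $H_{\mu_{0}}$ as a supremum over $f \in \mathcal{C}(X)$ of weak$^{*}$-continuous affine functionals of $\omega_{u}^{n}$, and $\omega_{u_{\tau}}^{n}$ depends weak$^{*}$-continuously on $\tau$ along the weak geodesic.

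For upper semicontinuity I would revisit the truncations from the proof of Theorem \ref{thm:main text}. Since $\Psi_{A} \geq \log \omega_{u_{\tau}}^{n}$ pointwise, one has $\mathcal{M}(u_{\tau}) \leq f^{\Psi_{A}}(\tau)$ for every $A$, and $f^{\Psi_{A}}(\tau) \downarrow \mathcal{M}(u_{\tau})$ pointwise as $A \to \infty$ by monotone convergence. If each $f^{\Psi_{A}}$ were continuous in $\tau$, then for any $\tau_{0}$ and $\varepsilon > 0$, picking $A$ with $f^{\Psi_{A}}(\tau_{0}) < \mathcal{M}(u_{\tau_{0}}) + \varepsilon$ and using continuity of $f^{\Psi_{A}}$ near $\tau_{0}$ would yield $\mathcal{M}(u_{\tau}) \leq f^{\Psi_{A}}(\tau) < \mathcal{M}(u_{\tau_{0}}) + 2\varepsilon$ for $\tau$ near $\tau_{0}$, proving upper semicontinuity.

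The main obstacle is therefore the continuity of $f^{\Psi_{A}}$, which I would attack first in the integral case $[\omega] = c_{1}(L)$ using the global Bergman kernel remark preceding the theorem. There the functions $f^{\Psi_{A,k}}(\tau)$ built from the global Bergman measures $\beta_{k,\tau} = K_{k\phi_{\tau}} e^{-k\phi_{\tau}} k^{-n}$ are continuous and weakly subharmonic in $\tau$, since $H^{0}(X, K_{X} + kL)$ is finite-dimensional and the Bergman kernel depends continuously on $\tau$; moreover, by the global form of Theorem \ref{Thm:bergmankernel as}, $f^{\Psi_{A,k}}(\tau) \to f^{\Psi_{A}}(\tau)$ pointwise in a dominated manner. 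A diagonal argument with $A_{j}, k_{j} \to \infty$ then presents $\mathcal{M}(u_{\tau})$ directly as a dominated pointwise limit of continuous subharmonic functions, from which upper semicontinuity follows; combined with the lower semicontinuity above this gives continuity in the integral case. For a general K\"ahler class I would perturb $[\omega]$ along a small rational direction, apply the integral case, and pass to the limit using stability of the weak geodesic under uniform perturbation of boundary data. Once continuity of $\mathcal{M}(u_{\tau})$ is established, it coincides pointwise with its upper regularization as a weakly subharmonic function, so it is classically subharmonic, and the $S^{1}$-invariance of $u_{\tau} = u_{\log|\tau|}$ yields pointwise convexity of $t \mapsto \mathcal{M}(u_{t})$.
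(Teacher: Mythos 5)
Your overall architecture (continuity of the energy part from the $\mathcal{C}^{1,1}_{\C}$ bound, lower semicontinuity of the entropy from the Legendre transform, and global Bergman kernels in the integral case) matches the paper, but there are two genuine gaps. First, the step ``a dominated pointwise limit of continuous subharmonic functions is upper semicontinuous'' is false: take $v_k(\tau)=\max\{k^{-1}\log|\tau|,-1\}$, which are continuous and subharmonic but converge pointwise to a function that fails to be usc at the origin. The implication you need is the one the paper actually uses: along a weak geodesic the functions $f^{\Psi_{A,k}}$ are $S^1$-invariant, so ``continuous $+$ weakly subharmonic'' upgrades to ``convex in $t$'', and a finite pointwise limit of convex functions of one real variable \emph{is} convex, hence continuous on the open interval and usc at the endpoints; combined with your lower semicontinuity of the entropy this closes the integral case. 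So you should pass to convexity \emph{before} taking the limits in $k$ and $A$, not try to recover upper semicontinuity of $\mathcal{M}$ or of $f^{\Psi_A}$ directly from subharmonicity.

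Second, the reduction of a general K\"ahler class to the integral case by ``perturbing $[\omega]$ along a small rational direction'' does not work: on a generic compact K\"ahler manifold (a generic torus or K3, say) the K\"ahler cone contains no rational classes at all, so there is nothing to perturb to; and even when rational classes are dense one would have to control both $\mathcal{M}$ and the weak geodesic under a change of the background class, which is a substantial unproven claim. The paper instead stays with \emph{local} Bergman kernels in the general case and supplies two ingredients you are missing: a partition of unity $\xi_j^2$ subordinate to coordinate patches, splitting the entropy into local pieces $H_j$, and a strictly convex reparametrization $\kappa_\epsilon$ of the logarithm with $\kappa_\epsilon'\geq 1$, whose strict convexity absorbs the error terms created by the cut-offs into a uniform bound $dd^c H_j^{(k)}\geq -C_\epsilon$. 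Then $H_j^{(k)}+C_\epsilon t^2$ is continuous and weakly convex, hence convex; letting $k\to\infty$, summing over $j$, and letting $\epsilon\to 0$ and $A\to\infty$ gives pointwise convexity, and continuity again follows from convexity plus lower semicontinuity of the entropy. Without some substitute for this localization-plus-reparametrization device, your argument only covers integral classes.
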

\begin{proof}
Here we assume that the function $U$ defines a weak geodesic so we may assume that it depends only on $t:=\Re\tau$.
We  first consider the case when the class is integral. The functionals
 $f^{\Psi_{A, k}}(\tau)$ are then clearly continuous with respect to $\tau$ since by the continuity of the metric $\phi_\tau$, the Bergman kernels depend continuously on $\tau$. Hence  $f^{\Psi_{A, k}}$ are convex in the ordinary pointwise sense. These functions converge pointwise to  $f^{\Psi_{A}}$ as $k\to \infty$, so these functions are also convex. Finally, as $A\to \infty$ we get that the Mabuchi functional is also convex. As a convex function, $\mathcal{M}$ is thus continuous on the open interval and upper semicontinuous on the closed interval. By the lower semicontinuity of the entropy,  $\mathcal{M}$ is always lower semicontinuous, so we conclude that $\mathcal{M}$ is in fact continuous on the closed interval. 

We will now sketch how this argument can be adapted to the general case.  Then we define $\Psi_A$ as in the proof of Theorem 3.2. It is enough to prove that the corresponding function $f^{\Psi_A}$ is convex (in the pointwise sense) since then we can take the limit as $A\to -\infty$ and get that $\mathcal{M}$ is convex, and we conclude as in the integral case that $\mathcal{M} $ is continuous on the closed interval. 

Let $\kappa_\epsilon(s)$ be a sequence of strictly convex functions with $\kappa_\epsilon'\geq 1$  on the real line tending to $s$ as $\epsilon\to 0$. We define $f^{\Psi_A}_\epsilon$ just like $f^{\Psi_A}$, but replacing the factor 
$$
\log(\frac{e^{\psi_{A \tau}}}{\omega_0^n})
$$
in the entropy term by
$$
\kappa_\epsilon(\log(\frac{e^{\psi_{A \tau}}}{\omega_0^n})). 
$$
It is enough to prove that these functions are convex for all $\epsilon>0$ and we already know by the same argument as in the proof of Theorem 3.2 that they are weakly convex. We let $\xi_j^2$ be a partition of unity subordinate to a covering of coordinate patches over which $L$ is trivial and consider the local entropy functions
$$
H_j=\int_X \xi_j^2 \kappa_\epsilon(\log(\frac{e^{\psi_{A \tau}}}{\omega_0^n})).
$$
We define $H_j^{(k)}$ in a similar way, replacing $\Psi_A$ by its $k:th$ approximation by local Bergman kernels.
Taking the $dd^c$ of $H_j^{(k)}$,  using the plurisubharmonic variation of Bergman kernels and the strict convexity of $\kappa_\epsilon$ a direct estimate shows  that 
$$
dd^c H_j^{(k)}\geq - C_\epsilon,
$$
so $H_j^{(k)} + C_\epsilon t^2$ is convex since our local Bergman kernels depend continuously on $t$. Letting $k\to \infty$ we find that $H_j + C_\epsilon t^2$ is also convex. We can then sum over $j$ and conclude that 
$$
\int_X  \kappa_\epsilon(\log(\frac{e^{\psi_{A \tau}}}{\omega_0^n}))+ C_\epsilon t^2
$$
is convex and in particular continuous. Therefore  $f^{\Psi_A}_\epsilon$ are convex in the pointwise sense, since we already know that they are weakly convex. This completes the proof. 
\end{proof}

\subsection{Proof of Corollary \ref{cor:csck min mab etc intro}}

Fix $u_{0}$ and $u_{1}$ in $\mathcal{H}$ and denote by $u_{t}$
the corresponding weak geodesic. By the ``sub-slope inequality''
for the convex function $f(t):=\mathcal{M}(u_{t}),$ i.e. $f(1)-f(0)\geq f'(0)$
we have 
\[
\mathcal{M}(u_{1})-\mathcal{M}(u_{0})\geq f'(0)\geq\int_{X}(-R_{\omega_{u_{0}}}+\bar{R})\frac{du_{t}}{dt}_{|t=0}\omega_{u_{0}}^{n},
\]
 where the lower bound for $f'(0)$ is obtained by direct differentiations
as in the proof of Prop \ref{prop:form for mab} (see Lemma \ref{lem:deriv av mab along singular}
below). In particular, if $\omega_{u_{0}}$ has constant scalar curvature
then it minimizes the Mabuchi functional. More generally, applying
the Cauchy-Schwartz inequality to the right hand side of the inequality
above and using that $d(u_{0},u_{1})^{2}=\int(\dot {u_t}|_{t=0})^{2}\omega_{u_{0}}^{n}$
(see \cite{c0}) concludes the proof. 
\begin{lem}
\label{lem:deriv av mab along singular}Given and $u_{0},u_{1}\in\mathcal{H},$
let $u_{t}$ be the corresponding weak geodesic curve. Then 

\[
\lim_{t\rightarrow0^{+}}\frac{\mathcal{M}(u_{t})-\mathcal{M}(u_{0})}{t}\geq\int_{X}(-R_{\omega_{u_{0}}}+\bar{R})\frac{du_{t}}{dt}_{|t^{+}=0}\omega_{u_{0}}^{n}
\]
\end{lem}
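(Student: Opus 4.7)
The plan is to decompose $\mathcal{M}$ via the explicit formula \ref{eq:formula for mab} into an energy bracket and the entropy piece, and to estimate the liminf at $t=0^+$ of each separately. (By the convexity of $\mathcal{M}$ along the weak geodesic already established in Theorem \ref{thm:main text}, this liminf automatically equals the limit appearing in the statement.) The regularity inputs used throughout are: $u_0\in\mathcal{H}$ is smooth; the weak geodesic $u_t$ is $C^{1,1}_{\C}$ (so $u_t\to u_0$ uniformly on $X$ and $(u_t-u_0)/t\to \dot u_0:=\frac{du_t}{dt}|_{t^+=0}$ uniformly); and the mixed currents $\omega_{u_t}^j\wedge\omega_{u_0}^{n-1-j}$ are uniformly bounded and converge weakly to $\omega_{u_0}^{n-1}$ by Bedford--Taylor continuity on uniformly convergent bounded psh functions.

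The energy bracket is routine: its $t$-derivative is described by \ref{eq:differentials of energy and T-en}, and the convergences just recalled allow one to pass to the limit in the difference quotient, giving
\[
\lim_{t\to 0^+}\frac{1}{t}\Bigl[\frac{\bar R}{n+1}\bigl(\mathcal{E}(u_t)-\mathcal{E}(u_0)\bigr)-\bigl(\mathcal{E}^{\mathrm{Ric}\,\omega_0}(u_t)-\mathcal{E}^{\mathrm{Ric}\,\omega_0}(u_0)\bigr)\Bigr] = \int_X \dot u_0\bigl(\bar R\,\omega_{u_0}^n - n\,\mathrm{Ric}\,\omega_0\wedge\omega_{u_0}^{n-1}\bigr).
\]

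For the entropy, which is the main step, I invoke the Legendre--transform representation proved just before Proposition \ref{prop:form for mab}. After renormalizing both measures to have mass $1$ (the total mass $V:=\int\omega_0^n$ being preserved along the geodesic), that proposition applied with the smooth test function $f:=\log(\omega_{u_0}^n/\omega_0^n)$ (a legitimate choice precisely because $u_0\in\mathcal{H}$) and using $\int_X e^f\omega_0^n = V$ yields
\[
H_{\mu_0}(\omega_{u_t}^n)-H_{\mu_0}(\omega_{u_0}^n) \geq \int_X f\,(\omega_{u_t}^n - \omega_{u_0}^n),
\]
with equality at $t=0$ since this $f$ is the Gibbs optimizer there. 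Using the telescoping identity $\omega_{u_t}^n-\omega_{u_0}^n = dd^c(u_t-u_0)\wedge\sum_{j=0}^{n-1}\omega_{u_t}^j\wedge\omega_{u_0}^{n-1-j}$, integrating by parts on the closed compact manifold $X$ to transfer $dd^c$ onto the smooth form $f$, then dividing by $t$ and passing to the limit via the uniform convergence of $(u_t-u_0)/t$ combined with the Bedford--Taylor continuity of the mixed products, one obtains
\[
\liminf_{t\to 0^+}\frac{H_{\mu_0}(\omega_{u_t}^n)-H_{\mu_0}(\omega_{u_0}^n)}{t}\;\geq\; n\int_X \dot u_0\,dd^c f\wedge\omega_{u_0}^{n-1} = \int_X \dot u_0\bigl(-R_{\omega_{u_0}}\omega_{u_0}^n + n\,\mathrm{Ric}\,\omega_0\wedge\omega_{u_0}^{n-1}\bigr),
\]
where the last identity follows from $dd^c f = -\mathrm{Ric}\,\omega_{u_0}+\mathrm{Ric}\,\omega_0$ and $n\,\mathrm{Ric}\,\omega\wedge\omega^{n-1} = R_\omega\,\omega^n$.

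Summing the two contributions, the $n\,\mathrm{Ric}\,\omega_0\wedge\omega_{u_0}^{n-1}$ terms cancel and the inequality claimed in the lemma drops out. The principal obstacle is the entropy step: one cannot differentiate $\log(\omega_{u_t}^n/\omega_0^n)$ pointwise along the rough curve $u_t$, so the Gibbs/Legendre trick is used to replace this rough integrand by the smooth function $f$; the smoothness of $f$ is exactly what justifies the integration by parts and the Bedford--Taylor passage to the limit, and it relies crucially on the fact that $u_0$ is an honest K\"ahler potential rather than merely a $C^{1,1}_{\C}$ element of $\mathcal{H}_{1,1}$.
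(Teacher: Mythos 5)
Your proposal is correct and follows essentially the same route as the paper: both reduce to the key inequality $H_{\mu_0}(\omega_{u_t}^n)-H_{\mu_0}(\omega_{u_0}^n)\geq\int_X\log(\omega_{u_0}^n/\mu_0)\,(\omega_{u_t}^n-\omega_{u_0}^n)$, then expand $\omega_{u_t}^n-\omega_{u_0}^n$ telescopically and integrate by parts onto the smooth density of $\omega_{u_0}^n$, with the energy terms handled directly from formula \ref{eq:differentials of energy and T-en}. The only (cosmetic) difference is that you obtain the key inequality from the Legendre/Gibbs representation in Proposition 3.1 with the optimizer $f=\log(\omega_{u_0}^n/\omega_0^n)$, whereas the paper derives it from the convexity of the entropy along affine segments of measures together with a monotone-convergence computation of the directional derivative.
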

\begin{proof}
This is shown by refining the argument in the proof of Prop \ref{prop:form for mab}. We will first handle the entropy part, i e show that
$$
\lim_{t\to 0}(1/t)(H_{\mu_0}(\omega_{u_t}^n)-H_{\mu_0}(\omega_{u_0}^n))
\geq
$$
$$
-n\int_{X}\frac{du_{t}}{dt}_{t=0}\mbox{Ric}\omega_{u_{0}}\wedge\omega_{u_{0}}^{n-1}+n\int_{X}\frac{du_{t}}{dt}|_{t=0}\mbox{Ric}\omega_{0}\wedge\omega_{u_{0}}^{n-1}.
$$
Here we use the fact that the entropy is convex with respect to the affine structure on the space of probability measures (cf Proposition 3.1) , so that
$$
H_{\mu_0}(\nu_1)-H_{\mu_0}(\nu_0)\geq (d/ds)|_{s=0}H_{\mu_0}(\nu_s)
$$
if $\nu_s=s\nu_1+(1-s)\nu_0$. Moreover, since $\log(\nu_s/\mu_0)\nu_s$ is convex in $s$, it follows from monotone convergence that
$$
(d/ds)|_{s=0}H_{\mu_0}(\nu_s)=\int \log(\nu_0/\mu_0)(d\nu_1-d\nu_0).
$$
From this we get, choosing $\nu_1=\omega_{u_t}^n$ and $\nu_0=\omega_{u_0}^n$ that
\[
\frac{1}{t}\left(H_{\mu_{0}}(\omega_{u_{t}}^{n})-H_{\mu_{0}}(\omega_{u_{0}}^{n})\right)\geq \int\log(\omega_{u_{0}}^n/\mu_{0})\frac{1}{t}\left(\omega_{u_{t}}^{n}-\omega_{u_{0}}^{n}\right)
\]
Expand $\omega_{u_{t}}^{n}-\omega_{u_{0}}^{n}=dd^{c}(u-u_{t})\wedge(\omega_{u_{0}}^{n-1}+...\omega_{u_{t}}^{n-1})$
and use integration by parts to let the $dd^{c}-$operator instead
act on the smooth function $\log\frac{\omega_{u_{0}}^{n}}{\mu_{0}}$.
Then letting  $t\rightarrow0$ we get  the desired inequality for
the entropy part of $\mathcal{M}(u_{t}).$ The calculation
for the derivative of the ``energy part'' of $\mathcal{M}$ follows
immediately from the relations \ref{eq:differentials of energy and T-en}.
\end{proof}

\subsubsection{\label{sub:The-twisted-setting}The twisted setting }

Later on we will also consider 'twisted'  versions of the Mabuchi functional. These are obtained simply as the sum of $\mathcal{M}$ and another convex functional $\mathcal{F}$. We will consider two main cases. The first is to let $\mu$ be a strictly positive smooth volume form on $X$ and put
$$
\mathcal{F}(u)=\mathcal{F}_\mu(u):= \int_X u d\mu -c_\mu\mathcal{E}(u),
$$
with $c_\mu$ choosen so that $\mathcal{F}_\mu(1)=0$.
Clearly $\F_\mu$ is convex along weak geodesics since its derivative is
$$
(d/dt)\F_\mu(u_t)=\int_X u'_t d\mu -(d/dt)\e(u_t).
$$
The first term here is increasing since $u'_t$ is increasing, and the second term is constant since the energy is linear along weak geodesics. 
The next choice is to let $\alpha$ be a strictly positive $(1,1)$-form on $X$
and let
$$
\mathcal{F}=\mathcal{F}_\alpha:= \mathcal{E}^\alpha -c_\alpha\mathcal{E},
$$
the constant $c_\alpha$ again chosen so that $\mathcal{F}$ vanishes on constants. By formula (3.5) $\mathcal{F}_\alpha$ is again convex along (sub)geodesics, since it is clearly continuous. (The strict convexity seems to be a more subtle issue that for simplicity we do not discuss here.) The critical points of $\mathcal{M}_\alpha:= \mathcal{M}+\mathcal{F}_\alpha$ are said to have constant  $\alpha$-twisted scalar curvature, i e  they satsify an equation
$$
R_\omega -\mbox{tr}_\omega(\alpha) = \mbox{constant}_\alpha,
$$
see \cite{fi}, \cite{sto}.
Just as before it follows that any metric with constant $\alpha$-twisted scalar curvature minimizes $\mathcal{M}_\alpha$.
As a consequence,
the $\alpha-$twisted Mabuchi functional is bounded from below in
any K\"ahler class containing a metric with constant $\alpha-$twisted
scalar curvature. As shown in \cite{sto} this leads to geometric
obstructions for the existence of such metrics.

\subsection{\label{sub:A-positivity-property}A positivity property for solutions
to homogeneous Monge-Amp\`ere equation and its relation to foliations}

The proof of Theorem \ref{thm:main text} yields the following positivity
result of independent interest, for sufficiently regular solutions
to the local homogeneous Monge-Amp\`ere equation on a product domain
(in the proof of Theorem \ref{thm:main text} the role of the current
$S$ below is played by $(dd^{c}\Phi)^{n}$): 
\begin{thm}
\label{thm:pos prop}Let $\Phi$ be a plurisubharmonic function on
$M:=X\times D$ where $X$ and $D$ are domains in $\C^{n}$ and $\C,$
respectively and assume that the positive current $dd^{c}\Phi$ has
components in $L_{loc}^{\infty}$ and satisfies $(dd^{c}\Phi)^{n+1}=0.$
Then the singular metric induced by the fiberwise currents $\omega_{\tau}:=dd^{c}\phi_{\tau}$
on the relative canonical line bundle $K_{M/D}\rightarrow M$ has
non-negative curvature along any positive current $S$ in $M$ of
bidimension $(1,1)$ with the property that $\Phi$ is harmonic along
$S,$ i.e. $\left\langle dd^{c}\Phi,S\right\rangle =0.$ More precisely,
for any positive number $A$ 

\[
i\partial\bar{\partial}\log_{A}\det(\frac{\partial^{2}\phi_{\tau}}{\partial z_{i}\partial\bar{z}_{j}})\wedge S\geq0,
\]
 in terms of the truncated logarithm defined by $\log_{A}t:=\max\{\log t,-A\}.$ 
\end{thm}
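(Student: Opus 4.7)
My plan is to extract the local positivity argument from the proof of Theorem \ref{thm:main text} and apply it in the abstract setting at hand, where the hypothesis $\langle dd^{c}\Phi,S\rangle=0$ plays the role previously played by the geodesic equation $(dd^{c}\Phi)^{n+1}=0$. The key ingredients are once again Berndtsson's plurisubharmonic variation of Bergman kernels \cite{bern00}, combined with the local Bergman kernel asymptotics of Corollary \ref{cor:Bergman}.

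First I would set up the local Bergman approximation exactly as in the proof of Theorem \ref{thm:main text}. Cover $X$ by small coordinate balls $Y$ and, for each $\tau\in D$, let $\beta_{k}(x,\tau)$ denote the density of the local Bergman measure on $Y$ attached to the weight $k\phi_{\tau}$. Define the truncated function
\[
\Psi_{A,k}(x,\tau):=\max\{\log\beta_{k}(x,\tau),\,-A\}.
\]
By \cite{bern00} the function $\log K_{k\phi_{\tau}}(x)=\log\beta_{k}(x,\tau)+k\Phi(x,\tau)$ is plurisubharmonic on $M$, and since $-A+k\Phi$ is trivially psh, the maximum $\Psi_{A,k}+k\Phi$ is a psh function on $M$, locally bounded thanks to the uniform $L^{\infty}$ bound of Theorem \ref{Thm:bergmankernel as}. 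Consequently
\[
dd^{c}\Psi_{A,k}\geq -k\,dd^{c}\Phi
\]
in the sense of currents.

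Next I would wedge with $S$. Because $\Psi_{A,k}+k\Phi$ is locally bounded psh and $S$ is a positive closed current of bidimension $(1,1)$, the Bedford--Taylor wedge $dd^{c}(\Psi_{A,k}+k\Phi)\wedge S$ is well-defined as a positive measure. Using the hypothesis $dd^{c}\Phi\wedge S=0$,
\[
dd^{c}\Psi_{A,k}\wedge S=dd^{c}(\Psi_{A,k}+k\Phi)\wedge S-k\,dd^{c}\Phi\wedge S=dd^{c}(\Psi_{A,k}+k\Phi)\wedge S\geq 0.
\]

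Finally I would pass to the limit $k\to\infty$. By Corollary \ref{cor:Bergman}, after extracting a subsequence, $\beta_{k}(\cdot,\tau)\to\det(\phi_{i\bar j}(\cdot,\tau))$ almost everywhere, so $\Psi_{A,k}\to\log_{A}\det(\phi_{i\bar j})$ almost everywhere in a uniformly bounded fashion. Applying dominated convergence to the distributional identity
\[
\int v\,dd^{c}\Psi_{A,k}\wedge S=\int\Psi_{A,k}\,S\wedge dd^{c}v,
\]
valid for every smooth compactly supported test function $v$, one obtains $dd^{c}\Psi_{A,k}\wedge S\to dd^{c}\log_{A}\det(\phi_{i\bar j})\wedge S$ in the sense of distributions. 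As a weak limit of positive measures the limiting current is itself positive, which is the asserted inequality. The step requiring the most care is this last convergence: it is essential that the truncation $\max\{\cdot,-A\}$ be in place, since the uniform boundedness it provides (and which fails for $\log\beta_{k}$ itself) is what permits the dominated convergence argument and, in particular, guarantees that the distributional limit of the positive measures $dd^{c}\Psi_{A,k}\wedge S$ exists.
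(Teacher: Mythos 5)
Your proof is precisely the paper's intended argument: the authors give no independent proof of this theorem, stating only that the proof of Theorem \ref{thm:main text} yields it with $S$ in place of $(dd^{c}\Phi)^{n}$ and the hypothesis $\left\langle dd^{c}\Phi,S\right\rangle =0$ in place of the geodesic equation, and your write-up is a faithful extraction of exactly that argument (Berndtsson's plurisubharmonic variation of local Bergman kernels, the truncation at $-A$, wedging with $S$, and the asymptotics of Corollary \ref{cor:Bergman}). The one caveat, which you inherit from the paper rather than introduce, is that the final dominated-convergence step needs $\Psi_{A,k}\rightarrow\log_{A}\det(\phi_{i\bar{j}})$ almost everywhere with respect to the trace measure of $S$, and the Lebesgue-a.e.\ convergence of Corollary \ref{cor:Bergman} delivers this only when that trace measure is absolutely continuous with respect to Lebesgue measure (as it is for $S=(dd^{c}\Phi)^{n}$, which has $L_{loc}^{\infty}$ coefficients); for a general positive current $S$ this point, and indeed the very meaning of pairing the merely $L^{\infty}$ function $\log_{A}\det(\phi_{i\bar{j}})$ with $S$, requires an additional argument that neither you nor the paper supplies.
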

In particular, if $\Phi$ happens to admit a Monge-Amp\`ere foliation
then the positivity result above holds along the leaves of the foliation.
This observation is closely related to a previous local result of
Bedford-Burns (see Prop 4.1 in \cite{b-b}) and Chen-Tian who considered
the case when $\Phi$ corresponds to a global bona fide geodesic $u_{t}$
in the space of K\"ahler potentials on a K\"ahler manifold $(X,\omega)$
(see (Corollary 4.2.11 in \cite{c-t}). Then, by a classical result
of Bedford-Kalka (which only demands that $\Phi$ be $C^{3}-$smooth),
there is a foliation of $M:=X\times D$ in one-dimensional complex
curves $\mathcal{L}_{\alpha}$ (the leaves) such that the local potential
$\Phi$ is harmonic along any leaf $\mathcal{L}_{\alpha}.$ Moreover,
the leaves are transverse to the slice $X\times\{0\}$ (and hence
the latter space can be used as the parameter space for the set of
leaves). In this setting the results of  Bedford-Burns and Chen-Tian referred
to above may be formulated as the following special case of the previous
theorem:
\begin{prop}
\label{prop:pos along leaves}Consider the relative canonical line
bundle $K_{M/A}$ with the smooth metric induced by the volume forms
$(dd^{c}\phi_{t})^{n}.$ Then its restriction to any leaf $\mathcal{L}_{\alpha}$
has non-negative curvature. 
\end{prop}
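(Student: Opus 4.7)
The plan is to derive Proposition \ref{prop:pos along leaves} as an immediate specialization of Theorem \ref{thm:pos prop}. I would take the positive bidimension-$(1,1)$ current $S$ in Theorem \ref{thm:pos prop} to be the current of integration $[\mathcal{L}_\alpha]$ along a fixed leaf of the Bedford--Kalka foliation; this is a bona fide positive current in $M = X \times D$ because, under the smoothness hypothesis of the proposition, the leaves are smooth holomorphic curves. The defining property of the foliation is that $\Phi$ is harmonic on every leaf, which in the distributional language of Theorem \ref{thm:pos prop} reads $\langle dd^c\Phi,[\mathcal{L}_\alpha]\rangle = 0$, so the hypothesis needed to invoke that theorem is met.

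In the present setting $\Phi$ is smooth and the fiberwise form $dd^c_X\phi_t$ is strictly positive, so $\det(\partial^2\phi_\tau/\partial z_i\partial\bar z_j)$ is smooth and bounded away from zero on compact pieces of a leaf. Consequently the truncation $\log_A$ appearing in the conclusion of Theorem \ref{thm:pos prop} is inoperative for $A$ large enough, and the inequality reduces to
\[
i\partial\bar\partial\log\det\Bigl(\frac{\partial^{2}\phi_{\tau}}{\partial z_{i}\partial\bar{z}_{j}}\Bigr)\wedge [\mathcal{L}_\alpha]\ \geq\ 0.
\]
Unwinding the definitions, the left-hand side is exactly $dd^c\Psi|_{\mathcal{L}_\alpha}$ with $\Psi = \log((dd^c\phi_t)^n)$ denoting the weight of the metric on $K_{M/D}$ induced by the fiberwise volume forms, and this is the statement of Proposition \ref{prop:pos along leaves}.

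For a reader who prefers a self-contained derivation, I would retrace the Bergman kernel step of the proof of Theorem \ref{thm:main text}. Locally, Berndtsson's theorem on the plurisubharmonic variation of Bergman kernels gives $dd^c\log K_{k\phi_\tau}\geq 0$ on $M$, and therefore
\[
dd^c\log\beta_k \ \geq\ -k\, dd^c\Phi \qquad\text{on } M.
\]
Restricting to a leaf, the right-hand side vanishes by harmonicity of $\Phi$ along $\mathcal{L}_\alpha$, so $dd^c\log\beta_k|_{\mathcal{L}_\alpha}\geq 0$. In the smooth, strictly positive setting of the proposition, the Bergman asymptotics of Theorem \ref{Thm:bergmankernel as} improve to the classical Bouche--Tian uniform convergence on compacts, so $\log\beta_k \to \log\det(\partial^2\phi_\tau/\partial z_i\partial\bar z_j)$ uniformly along the leaf and subharmonicity passes to the limit. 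The only step that deserves a line of justification is this final uniform passage to the limit; I expect this to be routine in the smooth category, since all the genuinely delicate content has already been packaged into Theorems \ref{thm:pos prop} and \ref{Thm:bergmankernel as}, of which the proposition is essentially the classical foliated shadow.
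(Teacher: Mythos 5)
Your proposal is correct and follows the paper's own route: the paper states this Proposition precisely as the special case of Theorem \ref{thm:pos prop} obtained by taking $S=[\mathcal{L}_{\alpha}]$, using that the Bedford--Kalka leaves are holomorphic curves along which $\Phi$ is harmonic, and that in the smooth strictly positive setting the truncation $\log_{A}$ is inactive for $A$ large. Your supplementary Bergman-kernel derivation likewise mirrors the mechanism by which Theorem \ref{thm:pos prop} itself is proved, so nothing essentially new or divergent is introduced.
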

Interestingly, in the presence of a foliation as above the closed
positive current $S:=(dd^{c}\Phi)^{n}$ on $M$ of dimension $(1,1),$
appearing in the proof of Theorem \ref{thm:main text}, can be written
as an average of the integration currents $[\mathcal{L}_{\alpha}]$
defined by the leaves of the foliation: 
\[
S=\int_{\alpha\in X}[\mathcal{L}_{\alpha}]\mu,
\]
 where $\mu:=(dd^{c}\phi_{0})^{n}$. 

Another special case of Theorem 3.4, concerning the case when the
current $S$ is assumed to be a smooth complex curve (but not necesseraily
a leaf of a foliation) and $\Phi$ is $C^{2}-$smooth has previously
appeared in connection to the problem of constructing low regularity
(i.e. not $C^{2})$ solutions to complex Monge-Amp\`ere equations (see
Lemma in \cite{b-f} and Proposition 2.2 in \cite{dar-l}).

\section{Uniqueness results}

In this section we shall show how the convexity of the K-energy implies uniqueness of metrics, up to automorphisms,  of metrics of constant scalar curvature and more generally extremal metrics. Recall that $\h(X,\omega)$ denotes the space of (smooth) potentials of K\"ahler metrics on $X$ that are cohomologous to a fixed reference metric $\omega>0$ (see the introduction). The tangent space of $\h$ is the space of smooth functions on $X$, and we can identify the space of K\"ahler metrics cohomologous to $\omega$ with $\h$ modulo constants. We will use the twisted Mabuchi functionals from section 3.1.1 and start with some preparations.

Let $\mu>0$ be a smooth volume form on $X$, that for simplicity we normalize so that
$$
\int_X d\mu =\int_X \omega^n.
$$
We have then defined the function
$$
\F_\mu(u)=\int_X ud\mu -\e(u):=I_\mu(u) -\e(u)
$$
in section 3.1.1. The basic idea is to use the twisted Mabuchi functionals
$$
\M_s:=\M+ s\F_\mu,
$$
for $0<s<<1$. 
The main difficulty in the proof is that although we know that $\M$ is convex along generalized geodesics, we don't know when it is linear along the geodesics. (Conjecturally this holds only for geodesics that come from the flow of a holomorphic vector field.)  
 Therefore we perturb $\M$ by adding $s\F_\mu$ which gives us  a strictly convex functional. In case there are no nontrivial holomorphic vector fields on $X$, one can prove by the implicit function theorem that near each critical point of $\M$ there is a critical point of $\M_s$. By strict convexity, there can be at most one critical point of $\M_s$, and it follows that there is at most one critical point of $\M$ too. In case there are holomorphic vector fields it of course no longer holds that there are critical points of $\M_s$ near each critical point of $\M$ - if it did, we would get absolute uniqueness and not just uniqueness up to automorphisms. However,  it turns out that each critical point of $\M$ can be moved by an element in Aut$_0(X)$ to a new critical point, which {\it can}  be approximated by critical points of $\M_s$, and this gives uniqueness up to automorphisms. The proof of this latter fact requires a rather sophisticated version of the implicit function theorem, so to simplify we shall instead work with 'almost critical points', which avoids the use of the implicit function theorem. .

\bigskip

With our normalization, $\F_\mu$  vanishes on constants so it decends to a functional on the space of K\"ahler forms in $[\omega]$. We have already seen that $\F_\mu$ is convex; next we shall prove that it is strictly convex in a certain sense. Since $\e$ is linear, this amounts to proving the strict convexity of $I_\mu$.
\begin{prop}
$I_{\mu}$ is strictly convex along $C^{1,1}$-subgeodesics in the sense that if $u_t$ is a $C^{1,1}$-subgeodesic and $f(t):=I_\mu(u_t)$ is affine, then $\omega_t=dd^cu_t + \omega$ is constant. More precisely, if $\omega_t=dd^cu_t+\omega\leq C \omega$  and $\mu\geq A\omega^n$, then
$$
f'(1)-f'(0)\geq \delta A/(C^{n+1})d(\omega_0,\omega_1)^2,
$$
where $\delta>0$ only depends on $\mu$, $\omega$  and $X$, and $d(\omega_0,\omega_1)$ is the Mabuchi distance. 
\end{prop}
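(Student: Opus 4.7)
The plan is to bound $f''(t)$ from below using the subgeodesic equation, apply a Poincar\'e-type inequality to recover a squared $L^2$-norm of $\dot u_t$ against $\omega_t^n$, and then integrate in $t$ to reach the squared Mabuchi distance.

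Differentiating twice, $f''(t)=\int_X\ddot u_t\,d\mu$ in the distributional sense (rigorous by the $\mathcal{C}^{1,1}_\C$-hypothesis on $U$). The Schur-complement formula for the determinant of the Hermitian block matrix $\pi^*\omega+dd^cU$ gives
$$
(\pi^*\omega+dd^cU)^{n+1}=c_n\bigl(\ddot u_t\,\omega_t^n-n\,i\partial\dot u_t\wedge\bar\partial\dot u_t\wedge\omega_t^{n-1}\bigr)\wedge i\,d\tau\wedge d\bar\tau,
$$
so the subgeodesic condition translates to the measure inequality $\ddot u_t\,\omega_t^n\geq n\,i\partial\dot u_t\wedge\bar\partial\dot u_t\wedge\omega_t^{n-1}$ on $X$ for almost every $t$. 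Crucially, the right-hand side is a bounded non-negative $(n,n)$-measure (its coefficient is the adjugate of $\omega_t$), so the inequality survives through possible degeneracies of $\omega_t$ without any division by $\det\omega_t$.

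Using $\mu\geq A\omega^n$ and $\omega_t^n\leq C^n\omega^n$, we obtain $\mu\geq AC^{-n}\omega_t^n$, whence
$$
f''(t)\;\geq\;\frac{A}{C^n}\int_X\ddot u_t\,\omega_t^n\;\geq\;\frac{A}{C^n}\int_X n\,i\partial\dot u_t\wedge\bar\partial\dot u_t\wedge\omega_t^{n-1}.
$$
I would then invoke a uniform Poincar\'e-type inequality, valid for every $\omega_t\in[\omega]$ with $\omega_t\leq C\omega$ and $v\in\mathcal{C}^\infty(X)$:
$$
n\int_X i\partial v\wedge\bar\partial v\wedge\omega_t^{n-1}\;\geq\;\frac{\delta}{C}\int_X(v-c_t)^2\,\omega_t^n,\qquad c_t:=\tfrac{\int_X v\,\omega_t^n}{\int_X \omega_t^n},
$$
with $\delta=\delta(\omega,\mu,X)>0$. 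Setting $v=\dot u_t$ and integrating in $t$ gives
$$
f'(1)-f'(0)=\int_0^1 f''(t)\,dt\;\geq\;\frac{\delta A}{C^{n+1}}\int_0^1\!\int_X(\dot u_t-c_t)^2\,\omega_t^n\,dt.
$$
By Cauchy--Schwarz the right-hand double integral dominates the squared Mabuchi length of the path $\{u_t\}$ in $\mathcal{H}/\R$, which is itself at least $d(\omega_0,\omega_1)^2$. The strict-convexity claim then follows: if $f$ is affine then $f''\equiv 0$, so $d(\omega_0,\omega_1)=0$, and applying the bound to every subinterval $[s,s']\subset[0,1]$ shows $\omega_t$ is constant.

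The hardest step will be establishing the uniform Poincar\'e-type inequality, because $\omega_t$ can be degenerate on sets of positive measure, so no lower bound on the first eigenvalue $\lambda_1(\omega_t)$ is available in general. The resolution I envisage is to work throughout with the mixed form $i\partial v\wedge\bar\partial v\wedge\omega_t^{n-1}$ (whose coefficient is the always-bounded adjugate of $\omega_t$) and to combine the Poincar\'e inequality of the fixed reference metric $(X,\omega)$ with an integration by parts exploiting $[\omega_t]=[\omega]$ and $\omega_t\leq C\omega$; this transfer produces the extra factor $1/C$ beyond the $1/C^n$ already coming from the volume comparison.
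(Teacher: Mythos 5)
There is a genuine gap, and it sits exactly where you flag it: the ``uniform Poincar\'e-type inequality''
$$
n\int_X i\partial v\wedge\bar\partial v\wedge\omega_t^{n-1}\;\geq\;\frac{\delta}{C}\int_X(v-c_t)^2\,\omega_t^n
$$
is not established, and your proposed route to it does not work. By discarding $d\mu$ in favour of $\tfrac{A}{C^n}\omega_t^n$ at the very first step, you throw away the only non-degenerate measure in the problem and are then forced to prove a spectral-gap estimate that is uniform over all degenerate $\omega_t\in[\omega]$ with $\omega_t\leq C\omega$, measured against $\omega_t^n$ on \emph{both} sides. The two pointwise facts actually available --- $|\bar\partial v|^2_{\omega_t}\geq C^{-1}|\bar\partial v|^2_{\omega}$ and $\omega_t^n\leq C^n\omega^n$ --- only give $n\,i\partial v\wedge\bar\partial v\wedge\omega_t^{n-1}\geq C^{-1}|\bar\partial v|^2_{\omega}\,\omega_t^n$, whose integral has no lower bound in terms of the fixed Dirichlet energy because $\omega_t^n$ may vanish on large sets. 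Your suggested repair --- integrating by parts using $[\omega_t]=[\omega]$ --- moves a $dd^c$ onto $i\partial v\wedge\bar\partial v$ and therefore produces terms involving $dd^c v$ (second derivatives of $\dot u_t$), which are not controlled by anything in the hypotheses. So the key lemma of your argument is left unproved, and it is not a routine one (it is a collapsing-type estimate that fails pointwise and is at best delicate globally for $n\geq 2$).

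The paper's proof orders the estimates so that this issue never arises: it keeps the fixed measure $\mu$ throughout, writing $f''(t)\geq\int_X|\bar\partial\dot u_t|^2_{\omega_t}\,d\mu\geq C^{-1}\int_X|\bar\partial\dot u_t|^2_{\omega}\,d\mu$, then applies the Poincar\'e inequality only for the \emph{fixed} pair $(\omega,\mu)$ to get $\geq\tfrac{\delta}{C}\int_X|\dot u_t-a_t|^2\,d\mu$ with $a_t$ the $\mu$-average. Only at the very end does it convert to $\omega_t^n$, where one needs an \emph{upper} bound $\omega_t^n\leq C^n\omega^n\leq (C^n/A)\mu$ (available) together with the fact that the mean minimizes the $L^2$-distance to constants; this yields $\tfrac{\delta A}{C^{n+1}}\int_X|\dot u_t-b_t|^2\,\omega_t^n$ and hence the Mabuchi distance. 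I would rewrite your argument in that order; the rest of your proposal (the Schur-complement form of the subgeodesic inequality, the integration in $t$, the Cauchy--Schwarz step to the distance, and the deduction of the strict-convexity claim on subintervals) is sound and matches the paper.
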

\begin{proof} Assume first that $u_t$ is a smooth subgeodesic and $\omega_t>0$ for all $t$.  Then
$$
f''(t)=\int_X \ddot u_{tt} d\mu\geq\int_X |\dbar \dot u_t|^2_{\omega_t} d\mu,
$$
since $u_t$ is a subgeodesic. Assume $\omega_t\leq C\omega$ for all $t$. Then
$$
 |\dbar\dot u_t|^2_{\omega_t}\geq C^{-1} |\dbar\dot u_t|^2_{\omega}.
$$
Since $\omega$ and $\mu$ are fixed and $\dot u_t$ is a function we have that
$$
\int_X  |\dbar\dot u_t|^2_{\omega}d\mu\geq\delta \int_X |\dot u_t -a_t|^2 d\mu,
$$
where $a_t$ is the average of $\dot u_t$ with respect to $\mu$ and $\delta$ only depends on $\mu$, $\omega$ and $X$. Hence
$$
f''(t)\geq \delta/C\int_X |\dot u_t -a_t|^2 d\mu.
$$
Clearly it follows that $\dot u_t=a_t$  if $f$ is affine. If $u_t$ is only of class $C^{1,1}$ we can write $u_t$ as a decreasing limit of subgeodesics that converge uniformly in $C^1$ and are such that the constant $C$ can be kept fixed. It then follows that $f(t)$ also converges in $C^1$ and we get that
$$
\int_0^1 dt\int_X|\dot u_t-a_t|^2 d\mu =0,
$$
so $\dot u_t=a_t$ again. Hence $\omega_t$ is independent of $t$.

For the second statement we notice that we also have proved that
$$
f'(1)-f'(0)\geq(\delta/C)\int_0^1 dt\int_X|\dot u_t-a_t|^2 d\mu.
$$
But
$$
\int_X|\dot u_t-a_t|^2 d\mu\geq AC^{-n}\int_X|\dot u_t-a_t|^2 \omega_t^n\geq  AC^{-n}\int_X|\dot u_t-b_t|^2 \omega_t^n,
$$
where $b_t$ is the average of $\dot u_t$ with respect to $\omega_t^n$. Since
$$
\int_0^1 dt\int_X|\dot u_t-b_t|^2 \omega_t^n=d(\omega_1,\omega_0)^2,
$$
we have also proved the second statement.
\end{proof}
We will also need a lemma on how $\F_\mu$ depends on $\mu$.
\begin{lem} Let $\mu$ and $\nu$ be two smooth volume forms with total mass equal to the mass of $\omega^n$. Then 
$$
|\F_\mu(u) -\F_\nu(u)|\leq C_{\mu,\nu}
$$
for all $u$ in $\h$.
\end{lem}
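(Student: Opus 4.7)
The plan is to exploit the fact that in the normalization $\F_\mu(u) = \int_X u\,d\mu - \e(u)$ the $\e$-term carries no dependence on the volume form, so that
$$
\F_\mu(u) - \F_\nu(u) = \int_X u\,(d\mu - d\nu).
$$
The hypothesis $\int_X d\mu = \int_X d\nu = \int_X \omega^n$ makes the signed measure $d\mu - d\nu$ have total mass zero, which is precisely what is needed for the right-hand side to be invariant under $u \mapsto u + c$; this is a necessary feature since the bound must depend only on the K\"ahler metric $\omega_u$, not on the choice of potential for it.

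The main technical step is to represent $d\mu - d\nu$ as a $dd^c$-exact top form. Writing $d\mu - d\nu = f\,\omega^n$ with $f \in C^\infty(X)$ and $\int_X f\,\omega^n = 0$, I would appeal to standard Hodge theory for the $\omega$-Laplacian on the compact K\"ahler manifold $(X,\omega)$ to produce a smooth real-valued function $g$ on $X$ with
$$
d\mu - d\nu = n\,dd^c g \wedge \omega^{n-1},
$$
the factor $n$ arising from the pointwise identity $dd^c g \wedge \omega^{n-1} = \tfrac{1}{n}(\Delta_\omega g)\,\omega^n$. Two integrations by parts then move $dd^c$ off $u$ and onto $g$:
$$
\int_X u\,(d\mu - d\nu) = n\int_X g\,dd^c u \wedge \omega^{n-1} = n\int_X g\,(\omega_u - \omega)\wedge \omega^{n-1}.
$$

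To conclude, I would observe that $\omega_u \wedge \omega^{n-1}$ and $\omega \wedge \omega^{n-1}$ are positive measures on $X$ of common total mass $[\omega]^n$ (by the cohomological identity $[\omega_u]=[\omega]$), so
$$
|\F_\mu(u) - \F_\nu(u)| \leq 2n\,(\sup_X |g|)\,[\omega]^n =: C_{\mu,\nu},
$$
a constant depending only on $\mu$ and $\nu$ (through $g$) and not on $u$. The only point demanding any care is the solvability of $\Delta_\omega g = f$, which however is immediate from standard Fredholm theory for the Laplacian on a compact Riemannian manifold together with the mean-zero condition on $f$; I do not expect any substantive obstacle, and the argument has essentially the structure of a routine cohomological integration-by-parts.
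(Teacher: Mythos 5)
Your proof is correct, and it shares the paper's overall skeleton (reduce to $\int_X u\,(d\mu-d\nu)$ using the cancellation of the energy terms, represent $d\mu-d\nu$ as a $dd^c$-exact expression against a positive $(n-1,n-1)$-form, integrate by parts onto $u$, substitute $dd^c u=\omega_u-\omega$, and bound by cohomological masses), but the decomposition you use is genuinely different. The paper invokes Yau's theorem to write $\mu=\omega_\mu^n$ and $\nu=\omega_\nu^n$ with $\omega_\mu,\omega_\nu\in[\omega]$, sets $\omega_\mu-\omega_\nu=dd^c v$, and telescopes $\omega_\mu^n-\omega_\nu^n=dd^c v\wedge\sum_k\omega_\mu^{n-k-1}\wedge\omega_\nu^k$; the bound then involves $\sup_X|v|$ and the masses $\int_X\omega_u\wedge\omega_\mu^{n-k-1}\wedge\omega_\nu^k=[\omega]^n$. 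You instead solve the linear equation $\Delta_\omega g=f$ (with $d\mu-d\nu=f\,\omega^n$, $\int_X f\,\omega^n=0$) to get $d\mu-d\nu=n\,dd^c g\wedge\omega^{n-1}$, which is the same integration-by-parts argument run against the fixed form $\omega^{n-1}$ rather than the mixed powers. Your route is more elementary -- only Fredholm theory for the Laplacian and the trace identity $dd^c g\wedge\omega^{n-1}=\tfrac1n(\Delta_\omega g)\,\omega^n$ are needed, versus the solution of a complex Monge-Amp\`ere equation -- and the paper itself concedes in a parenthetical that Yau's theorem is not really necessary. What the paper's version buys is nothing essential here; both yield a constant of the form $C\,\|{\cdot}\|_{\infty}\,[\omega]^n$ depending only on $\mu$, $\nu$ and $\omega$. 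The one point you flag, solvability and smoothness of $g$, is indeed standard, so there is no gap.
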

\begin{proof}
By Yau's solution of the Calabi conjecture, we can write
$$
\mu=\omega_\mu^n, \quad \nu=\omega_\nu^n,
$$
with $\omega_{\mu,\nu}$ in $[\omega]$. (Of course, the proof does not really depend on the solution of the Calabi conjecture, since we could have  used only volume forms that {\it are} given as powers of K\"ahler forms in the proof.) Then $\omega_\mu-\omega_\nu=dd^c v$ for some function $v$ on $X$. Hence
$$
\F_\mu(u)-\F_\nu(u)=\int_X u(\omega_\mu^n-\omega_\nu^n)=\int_X u(dd^c v\wedge\sum \omega_\mu^{n-k-1}\wedge\omega_\nu^k).
$$
Integration  by parts gives
$$
\F_\mu(u)-\F_\nu(u)=\int_X v(dd^c u\wedge\sum \omega_\mu^{n-k-1}\wedge\omega_\nu^k)= \int_X v(\omega_u-\omega)\wedge\sum \omega_\mu^{n-k-1}\wedge\omega_\nu^k),
$$
which is clearly bounded by a constant depending only on the sup-norm of $v$ and the volume of $[\omega]$.
\end{proof}
Next we discuss briefly the Hessian of $\M$ on the space of smooth K\"ahler potentials. Denote $F=d\M$, the differential of the Mabuchi functional. It is a 1-form on $\h$ whose action on an element $v$ of the tangent space of $\h$, i e a smooth function is given by
$$
F(u).v=-\int_X v(R_{\omega_u}-\hat R_{\omega_u})\omega_u^n.
$$
The Mabuchi metric induces a connection $D$ on the tangent bundle of $\h$, which in turn induces a (dual) connection on the space of 1-forms that we also denote by $D$. If $v$ is a vector at a point $u$ we can then apply $D_v$  to the 1-form $F$ and get a new 1-form $D_vF$. By definition, if $w$ is another vector at $u$, then
$$
D_vF. w= H_\M(v,w)
$$
is the Hessian of $\M$ at $u$, which  in spite of appearances is a symmetric bilinear form (since the connection is symmetric). It is well known that this equals
$$
  H_\M(v,w)=\int_X \D_u v\, \overline{\D_u w}\,\omega_u^n,
$$
where $\D_u$ is the Lichnerowicz operator, see \cite{do00}. This is an elliptic operator of second order, $\D_u=\nabla_u\dbar$ where $\nabla$ is the Chern connection on the cotangent bundle of $X$ for the metric $\omega_u$. It is also well known that
$$
\D_u w=0
$$
if and only if the $(1,0)$ vector field $V$ (the complex gradient of $w$) on $X$ defined by
$$
V\rfloor \omega_u=i\dbar w
$$
is holomorphic. 
\begin{prop}
Let $\nu$ be a smooth volume form on $X$ that defines a 1-form $G_\nu$ at $u$ by
$$
G_\nu.w=\int_X wd\nu.
$$
Then there is a vector $v$ at $u$ such that 
$$
D_vF|_u =G_\nu
$$
if and only if $G_\nu.w=0$ for all $w$ such that the complex gradient of $w$ is holomorphic.
\end{prop}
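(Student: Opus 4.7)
The plan is to translate the problem into solvability of a fourth-order elliptic PDE on $X$ and then apply standard Fredholm theory for self-adjoint operators.

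First I would unwrap the defining identity $D_v F|_u = G_\nu$. By the given formula for the Hessian, this is equivalent to the relation
\[
\int_X \mathcal{D}_u v \cdot \overline{\mathcal{D}_u w}\, \omega_u^n \; = \; \int_X w\, d\nu
\]
for every smooth real function $w$ (the tangent vector $w$ at $u$). Integrating by parts, the left-hand side may be written as $\int_X (\mathcal{D}_u^* \mathcal{D}_u v) w\, \omega_u^n$, where $\mathcal{D}_u^*$ is the formal adjoint of $\mathcal{D}_u$ with respect to the $L^2$-inner product defined by $\omega_u^n$. Thus the condition to be satisfied by $v$ becomes the pointwise equation
\[
\mathcal{D}_u^* \mathcal{D}_u v \; = \; \frac{d\nu}{\omega_u^n}
\]
on $X$.

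Next I would invoke the standard properties of the Lichnerowicz operator. The composition $L_u := \mathcal{D}_u^* \mathcal{D}_u$ is a fourth-order, formally self-adjoint, elliptic operator on $C^\infty(X)$. Since
\[
\int_X (L_u v) v\, \omega_u^n \; = \; \int_X |\mathcal{D}_u v|^2\, \omega_u^n,
\]
its kernel coincides with $\ker \mathcal{D}_u$, which by the fact quoted just before the proposition is exactly the space of functions $w$ whose complex gradient (defined via $V \rfloor \omega_u = i\bar\partial w$) is holomorphic. By Fredholm theory, the equation $L_u v = f$ is solvable for $v \in C^\infty(X)$ if and only if $f$ is $L^2(\omega_u^n)$-orthogonal to $\ker L_u$; and $L_u$ is self-adjoint so the cokernel agrees with the kernel.

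Applying this with $f = d\nu/\omega_u^n$, the solvability condition reads
\[
\int_X w\, \frac{d\nu}{\omega_u^n}\, \omega_u^n \; = \; \int_X w\, d\nu \; = \; 0 \quad \text{for every } w \in \ker \mathcal{D}_u,
\]
which is precisely $G_\nu \cdot w = 0$ for every $w$ whose complex gradient is holomorphic, establishing both directions of the equivalence. The only non-routine point is the verification that $L_u$ is elliptic with $\ker L_u = \ker \mathcal{D}_u$ and admits the expected Fredholm alternative, but these are all classical properties of the Lichnerowicz operator that I would simply cite (e.g.\ as in \cite{do00}).
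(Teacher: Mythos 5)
Your proof is correct and follows essentially the same route as the paper: reduce $D_vF|_u=G_\nu$ to the equation $\mathcal{D}_u^*\mathcal{D}_u v\,\omega_u^n=\nu$ and apply the Fredholm alternative for the self-adjoint elliptic operator $\mathcal{D}_u^*\mathcal{D}_u$, whose kernel is $\ker\mathcal{D}_u$, i.e.\ the functions with holomorphic complex gradient. Your write-up only makes explicit the standard facts (ellipticity, identification of kernel and cokernel) that the paper cites in a single sentence.
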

\begin{proof}
We have
$$
D_vF|_u. w=H_\M(v,w)=\int_X  \D_u v\, \overline{\D_u w}\,\omega_u^n=\int_X \D_u^*\D_u v \,\,w \,\omega_u^n.
$$
Hence 
$$
D_vF|_u =G\nu
$$
means that
$$
\D_u^*\D_u v \, \omega_u^n =\nu.
$$
Since $\D_u^*\D_u v$ is a self adjoint  elliptic operator, this equation is solvable if and only if $\nu$ annihilates the kernel of  $\D_u^*\D_u v$, which is the same as the kernel of $\D_u$, i e the space of functions whose complex gradients are holomorphic.
\end{proof}

We are now ready for the uniqueness and we start with the case when there are no nontrivial holomorphic vector fields on $X$.
\begin{thm} Assume $\omega_{u_0}$ and $\omega_{u_1}$ are metrics of constant scalar curvature on $X$ and that $X$ has no nontrivial holomorphic vector fields. Then  $\omega_{u_0} =\omega_{u_1}$.
\end{thm}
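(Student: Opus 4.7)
Suppose, towards a contradiction, that $\omega_{u_0}\neq\omega_{u_1}$. The plan is to perturb $\M$ by a small multiple of $\F_\mu$, producing a strictly convex functional $\M_s$, and to derive a contradiction from the existence of approximate critical points of $\M_s$ near $u_0$ and $u_1$. Fix a strictly positive smooth volume form $\mu$ on $X$ normalized so that $\int_X d\mu=\int_X\omega^n$, and for $s>0$ small set $\M_s:=\M+s\F_\mu$. Since $\M$ is convex along weak geodesics by Theorem~\ref{thm:continuity text} and $\F_\mu$ is quantitatively strictly convex by Proposition~4.1, $\M_s$ is strictly convex along weak geodesics.

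The first main step is to construct, for each small $s$, smooth potentials $u_i^s$ close to $u_i$ with $d\M_s|_{u_i^s}=O(s^2)$. Since $\omega_{u_0}$ has constant scalar curvature, $d\M|_{u_0}=0$, so $d\M_s|_{u_0}=s\,d\F_\mu|_{u_0}$; this 1-form is represented by the signed measure $s(d\mu-\omega_{u_0}^n)$ of total mass zero. Under the assumption that $X$ has no non-trivial holomorphic vector fields, the kernel of the Lichnerowicz operator $\D_{u_0}$ is just the constants, so the solvability criterion of Proposition~4.3 applies and yields a smooth $v_0$ with $D_{v_0}F|_{u_0}=-d\F_\mu|_{u_0}$. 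Setting $u_0^s:=u_0+sv_0$ and Taylor-expanding gives
\[
d\M_s|_{u_0^s}=s\bigl(D_{v_0}F|_{u_0}+d\F_\mu|_{u_0}\bigr)+O(s^2)=O(s^2)
\]
in the sup-norm of the representing density; the analogous construction yields $u_1^s=u_1+sv_1$, and for $s$ small both lie in $\h$.

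The second step combines strict convexity with the approximate-critical bounds. Let $u_t^s$ denote the weak geodesic from $u_0^s$ to $u_1^s$; Chen's $C^{1,1}$ regularity applied to the smoothly convergent boundary data gives a uniform bound $\omega_t^s\leq C\omega$ with $C$ independent of $s$, so Proposition~4.1 yields
\[
f_s'(1^-)-f_s'(0^+)\geq c\,s\,L_s^2,\quad f_s(t):=\M_s(u_t^s),\quad L_s:=d(u_0^s,u_1^s),
\]
with $c>0$ independent of $s$. Applying Lemma~\ref{lem:deriv av mab along singular} at $t=0$ and, on the reversed geodesic, at $t=1$, combined with the honest derivative of $\F_\mu$ along the $C^{1,1}$ geodesic and Cauchy--Schwarz in the Mabuchi metric, yields
\[
f_s'(0^+)\geq -O(s^2)L_s,\qquad f_s'(1^-)\leq O(s^2)L_s.
\]
Subtracting gives $c\,s\,L_s^2\leq O(s^2)L_s$, so $L_s=O(s)$. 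Since $u_i^s\to u_i$ smoothly, continuity of the Mabuchi distance forces $L_s\to d(u_0,u_1)$, and therefore $d(u_0,u_1)=0$, giving $\omega_{u_0}=\omega_{u_1}$.

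The main technical obstacle is ensuring that the constants $A$ and $C$ in Proposition~4.1 are uniform in $s$; this reduces to stability of Chen's $C^{1,1}$-estimate under smooth convergence of boundary data, which is implicit in \cite{c0,bl} but should be verified in the precise form needed. A secondary point is that Lemma~\ref{lem:deriv av mab along singular} gives only one-sided bounds on $(\M(u_t^s))'$ at the endpoints, but these bounds point in the direction compatible with the convexity inequality $f_s'(0^+)\leq f_s'(1^-)$, so no loss occurs.
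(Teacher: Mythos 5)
Your proposal is correct and follows essentially the same route as the paper's proof: perturb $\M$ by $s\F_\mu$, use Proposition 4.3 (with the kernel of $\D_{u_0}$ reduced to constants) to build approximate critical points $u_i+sv_i$ with $d\M_s=O(s^2)$, connect them by a weak geodesic, and play the strict convexity of $I_\mu$ (Proposition 4.1, with the $s$-uniform $C^{1,1}$ bound from \cite{Berman-Demailly}) against the endpoint derivative bounds from Lemma 3.5 to force $d(u_0^s,u_1^s)\to 0$. The only cosmetic difference is your extra Cauchy--Schwarz step giving $csL_s^2\leq O(s^2)L_s$ where the paper simply bounds the endpoint pairings by $O(s^2)$; both yield the same conclusion.
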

\begin{proof} By hypthesis $u_0$ and $u_1$ are both critical points of $\M$, so
$F(u_0)=F(u_1)=0$. Let $\mu$ be a strictly positive volume form normalized as in the beginning of this section. The differential of $\F_\mu$ at $u_0$ is $G(u_0)=G_\nu$, where $\nu=\mu-\omega_{u_0}^n$. Since by our normalization this measure annihilates constants, which are now the only functions with holomorphic complex gradient,  Proposition 4.3 implies that we can solve
$$
D_{v_0}F|_{u_0}= -G(u_0).
$$
Consider the functional
$$
\M_{s}:=\M +s\F_\mu,
$$
and its differential
$$
F_s(u)=F(u) +s G(u).
$$
If $w_s$ is a smooth curve of continuous functions (that we consider as a tangent vector field along the curve $u_0 +s v_0$ in $\mathcal{H}$) we have
$$
(d/ds)|_0F_s(u_0+sv_0).w_s= D_{v_0}F|_{u_0}. w_0 +F(u_0). D_{v_0}w_s +G(u_0).w_0=0,
$$
since $F(u_0)=0$ and $D_{v_0}F|_{u_0}= -G(u_0)$. Hence $F_s(u_0+sv_0).w_s=O(s^2)$. 
More precisely, since 
$$
F_s(u_0+ sv_0).w=\int_X w dV_s
$$
for some smooth density $V_s$ depending smoothly on $s$, we get that $V_s =O(s^2)$, so
$$
|F_s(u_0+ sv_0).w|\leq C s^2\sup_X |w|.
$$
We can now do the same construction for the other critical point $u_1$ and obtain another function $v_1$ with similar properties. Then connect for $0<s<<1$ the smooth points  $u_0+sv_0$ and $u_1+sv_1$ by a $C^{1,1}$-geodesic $u_t^s$. We need to relate $F$ and $F_s$, the formal derivatives of $\M$ and $\M_s$, to the actual one sided derivatives along the geodesics at the endpoints. It is not a priori clear that they coincide since the formal derivatives are the derivatives in $\h$, the space of smooth potentials, and the weak geodesic has less regularity. However it follows from Lemma 3.5 that
$$
(d/dt)|^+_{t=0}\M(u^s_t)\geq F(u^s_0). (d/dt)|_{t=0}u^s_t.
$$
and also that we have the converse inequality at the other endpoint. Since $\F_s$ is differentiable one time on the closed geodesic, the same inequalities hold for $\M_s$ as well. 
 
Since $\M(u_t^s)$ is convex and $\e(u_t^s)$ is linear in $t$ we get that
$$
0\leq s((d/dt)|_1-(d/dt)|_0))I_{\mu}(u_t^s)\leq((d/dt)|_1-(d/dt)|_0))\M_{s}(u_t^s)\leq 
$$
$$
F(u^s_1). (d/dt)|_{t=1}u^s_t -F(u^s_0). (d/dt)|_{t=0}u^s_t\leq O(s^2).
$$
Dividing by $s$ we get that
$$
((d/dt)|_1-(d/dt)|_0))I_{\mu}(u_t^s)\leq C's,
$$
so 
by Proposition 4.1,  $d(\omega_{u_0^s},\omega_{u_1^s})^2\leq C''s$. Here we have also used the fact that the constant $C$ in Proposition 4.1 can be taken independent of $s$, i e that the $L^\infty$-bound on $\omega_{u_t^s}$ can be taken uniform in $s$, see  \cite{Berman-Demailly}.  Hence  $d(\omega_{u_0},\omega_{u_1})=0$ which implies that $\omega_{u_0}=\omega_{u_1}$ by a result of Chen, see \cite{c0}.
\end{proof}

Notice that there are two main points of the argument. Apart from the convexity along weak geodesics we also use that $\M$ is strictly convex on $\h$ (modulo constants) in the formal sense that its Hessian is strictly positive if there are no nontrivial holomorphic vecor fields. This means that the derivative of $F=d\M$ is invertible which allows us to solve $D_{v_0}F|_{u_0}= -G(u_0)$. The same principle is illustrated in the next result which concerns uniqueness of metrics of constant $\alpha$-twisted curvature (cf section 3.1.1).
\begin{thm} Let $\alpha$ be a K\"ahler form on $X$. Then there is at most one metric $\omega_0$ in a given K\"ahler class $[\omega]$ with constant $\alpha$-twisted curvature.
\end{thm}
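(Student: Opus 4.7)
The plan is to follow the perturbation scheme of the proof of Theorem 4.4, with the observation that when $\alpha$ is a K\"ahler form the Hessian of $\mathcal{M}_\alpha$ is automatically non-degenerate on smooth functions modulo constants, regardless of whether $X$ carries holomorphic vector fields. This is precisely what allows the conclusion to be absolute uniqueness rather than uniqueness modulo $\mbox{Aut}_0(X)$, and it is the only structural difference from the argument of Theorem 4.4.

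The first step is to compute the Hessian of $\mathcal{M}_\alpha$ at a critical point $u$. The $\mathcal{M}$-piece gives the Lichnerowicz form $\int_X |\mathcal{D}_u v|^2 \omega_u^n$ of Proposition 4.3, whose kernel consists of those $v$ whose complex gradient is a holomorphic vector field. The $\mathcal{F}_\alpha = \mathcal{E}^\alpha - c_\alpha \mathcal{E}$-piece contributes a positive second-order Dirichlet-type term of the shape $n(n-1)\int_X i\partial v \wedge \bar\partial v \wedge \alpha \wedge \omega_u^{n-2}$, obtained by differentiating $d\mathcal{E}^\alpha|_u = n\omega_u^{n-1}\wedge\alpha$. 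Since $\alpha>0$ is K\"ahler, this term is strictly positive whenever $\bar\partial v \not\equiv 0$, so the full Hessian of $\mathcal{M}_\alpha$ is coercive on smooth functions of zero mean. This yields the analog of Proposition 4.3 for $F_\alpha := d\mathcal{M}_\alpha$: for every smooth signed measure $\nu$ of total mass zero, the equation $D_v F_\alpha|_u = G_\nu$ is solvable in $v$.

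With this in hand the remainder proceeds exactly as in Theorem 4.4. Fix a positive smooth volume form $\mu$ normalized so that $\mathcal{F}_\mu$ descends to the space of K\"ahler metrics, and consider $\mathcal{M}_{\alpha,s}:= \mathcal{M}_\alpha + s\mathcal{F}_\mu$ for $0<s\ll 1$. At each critical point $u_j$ ($j=0,1$) of $\mathcal{M}_\alpha$ solve $D_{v_j} F_\alpha|_{u_j} = -G(u_j)$ with $G := d\mathcal{F}_\mu$; the compatibility condition holds automatically by the mass normalization of $\mu$. Exactly as in the computation leading to the bound $|F_s(u_0+sv_0).w| \leq C s^2 \sup_X |w|$ in the proof of Theorem 4.4, the points $u_j+sv_j$ are then approximate critical points of $\mathcal{M}_{\alpha,s}$. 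Connecting them by the $C^{1,1}$-geodesic $u_t^s$ and combining convexity of $\mathcal{M}$ (Theorem 1.1) and of $\mathcal{F}_\alpha$ (Section 3.1.1) with the strict convexity of $\mathcal{F}_\mu$ of Proposition 4.1, the sub-slope inequalities of Lemma 3.5 at each endpoint produce
\[
s\bigl((d/dt)|_1 - (d/dt)|_0\bigr) I_\mu(u_t^s) \leq \bigl((d/dt)|_1 - (d/dt)|_0\bigr)\mathcal{M}_{\alpha,s}(u_t^s) = O(s^2).
\]
Dividing by $s$ and applying Proposition 4.1 gives $d(\omega_{u_0^s},\omega_{u_1^s})^2 \leq C s$. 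Sending $s\to 0$ and invoking Chen's theorem as at the end of Theorem 4.4 yields $\omega_{u_0}=\omega_{u_1}$.

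The main obstacle is the Hessian computation: verifying carefully that the $\alpha$-twist contribution is coercive on zero-mean smooth functions, and that the resulting self-adjoint elliptic operator has closed range so that the solvability of $D_v F_\alpha|_u = G_\nu$ goes through. Once this analytic input is established, no new geometric idea beyond the proof of Theorem 4.4 is required; the positivity of $\alpha$ precisely breaks the degeneracy along holomorphic vector fields that forced the appearance of $\mbox{Aut}_0(X)$ in the constant-scalar-curvature case.
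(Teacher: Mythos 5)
Your strategy is exactly the paper's: the paper's proof of this theorem consists of the single observation that the Hessian of $\mathcal{E}^{\alpha}$ is strictly positive while that of $\mathcal{E}$ vanishes, so that the Hessian of $\mathcal{M}_{\alpha}$ is non-degenerate modulo constants, $D_{v_{0}}F_{\alpha}|_{u_{0}}=-G(u_{0})$ is solvable with no kernel condition, and the rest runs verbatim as in Theorem 4.4 (yielding absolute uniqueness). One correction, though, since you single out the Hessian computation as the main analytic input: the formula you give for the $\alpha$-contribution is not what you get by differentiating $d\mathcal{E}^{\alpha}|_{u}=n\omega_{u}^{n-1}\wedge\alpha$. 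Along an affine line that differentiation gives, after pairing with $v$ and integrating by parts against the closed form $\alpha\wedge\omega_{u}^{n-2}$, the quantity $-n(n-1)\int_{X}\tfrac{i}{2\pi}\partial v\wedge\bar{\partial}v\wedge\alpha\wedge\omega_{u}^{n-2}$, i.e. the \emph{negative} of the term you wrote, which is nonpositive. The Hessian relevant for $D_{v}F_{\alpha}$ is taken with respect to the Mabuchi connection and therefore carries the additional Christoffel term $d\mathcal{E}^{\alpha}\left(|\bar{\partial}v|_{\omega_{u}}^{2}\right)=n\int_{X}|\bar{\partial}v|_{\omega_{u}}^{2}\,\alpha\wedge\omega_{u}^{n-1}$; it is the combination
\[
n\int_{X}|\bar{\partial}v|_{\omega_{u}}^{2}\,\alpha\wedge\omega_{u}^{n-1}\;-\;n(n-1)\int_{X}\tfrac{i}{2\pi}\partial v\wedge\bar{\partial}v\wedge\alpha\wedge\omega_{u}^{n-2}
\]
that is nonnegative for $\alpha\geq0$ (by the pointwise trace inequality, this is the content of formula (3.5) specialized to curves) and strictly positive for $\bar{\partial}v\not\equiv0$ when $\alpha>0$: writing $\alpha=\epsilon\omega_{u}+(\alpha-\epsilon\omega_{u})$ with $\alpha-\epsilon\omega_{u}\geq0$, the $\epsilon\omega_{u}$ piece contributes exactly $\epsilon\int_{X}|\bar{\partial}v|_{\omega_{u}}^{2}\omega_{u}^{n}$. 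With this corrected, the remainder of your argument --- solvability of the linearized equation by self-adjoint ellipticity, the $O(s^{2})$ bound at the perturbed endpoints, the geodesic comparison via Proposition 4.1, and Chen's theorem --- coincides with the paper's.
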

\begin{proof} Recall that $\omega_0$ is a critical point of the twisted Mabuchi functional $\M_\alpha=\M +\e^\alpha-c(\alpha)\e$ ( cf section 3.1.1). One can check that the Hessian of $\e^\alpha$ (at a smooth point) is strictly positive, and we also have that the Hessian of $\e$ is zero since $\e$ is linear along geodesics. If we put $F_\alpha=d\M_\alpha$ it follows that we can solve $D_{v_0}F_\alpha|_{u_0}= -G(u_0)$ as in the proof of the previous theorem, and again we conclude by the convexity of $\M_\alpha$.
\end{proof}

We finally turn to the case of metrics of constant scalar curvature when there are non zero holomorphic vector fields on $X$. The argument is then essentially the same, with the additional difficulty that  we can not solve the equation
$$
D_{v_0}F|_{u_0}= -G(u_0)
$$
in general. Therefore we shall make a preliminary modification of $u_0$ by applying an automorphism in Aut$_0(X)$, so that after the modification $G(u_0)$ annihilates all functions with holomorphic complex gradient.
\begin{prop} Let $S$ be the submanifold  of $\h$ consisting of all potentials of metrics $g^*\omega_{u_0}$, where $g$ ranges over Aut$_0(X)$. Then $\F_\mu$ has a minimum and hence a critical point on $S$. This implies that $G=d\mathcal{F}_\mu$ annihilates all real functions whose complex gradients are holomorphic.
\end{prop}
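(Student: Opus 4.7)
The plan is to establish the proposition in two steps: first, existence of a minimum of $\F_\mu$ on $S$; second, derivation of the annihilation property as the first-order stationarity condition at that minimum.

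For existence, I would identify $S$ with the homogeneous space $\mathrm{Aut}_0(X)/H$, where $H \subset \mathrm{Aut}_0(X)$ is the isotropy subgroup of $\omega_{u_0}$. Any $g \in H$ is a holomorphic isometry of $\omega_{u_0}$, so $H$ is compact and $S$ is a finite-dimensional smooth manifold. To exhibit a minimum I would show that $\F_\mu$ is proper on $S$. The key input is that any one-parameter subgroup of $\mathrm{Aut}_0(X)$ generated by a real holomorphic vector field produces a subgeodesic in $\mathcal{H}_{1,1}$, by the classical Semmes-Donaldson picture, along which $\F_\mu$ is strictly convex by Proposition 4.1. After quotienting out the compact isotropy, non-compact directions in $S$ correspond to orbits of non-trivial semisimple elements of the Lie algebra of $\mathrm{Aut}_0(X)$, and the quantitative estimate in the second half of Proposition 4.1, relating the increment of the derivative to the squared Mabuchi distance, forces $\F_\mu \to +\infty$ along such rays. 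Properness and continuity then yield a minimizer $u^\star \in S$.

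For the annihilation property, let $u^\star \in S$ be a minimizer. I would characterize the tangent space $T_{u^\star} S$ by differentiating the $\mathrm{Aut}_0(X)$-action: if $\xi = V + \bar V$ is a real holomorphic vector field on $X$ (with $V$ its $(1,0)$-part) and $g_t$ is its flow, then $g_t^* \omega_{u^\star} = \omega + dd^c u_t^\star$ and the tangent vector $w := \tfrac{d}{dt}\bigl|_0 u_t^\star$ satisfies $V \lrcorner \omega_{u^\star} = i\bar\partial w$. That is, the complex gradient of $w$ with respect to $\omega_{u^\star}$ is the holomorphic vector field $V$; conversely, every real function with holomorphic complex gradient arises this way from some element of the Lie algebra of $\mathrm{Aut}_0(X)$. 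Hence $T_{u^\star} S$ is precisely the space of real functions whose complex gradient is holomorphic, and the first-order stationarity of $\F_\mu$ at $u^\star$ forces $G(u^\star) = d\F_\mu|_{u^\star}$ to annihilate this tangent space, exactly as claimed.

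The main obstacle is the properness assertion in the existence step. Strict convexity along individual subgeodesics is not in itself sufficient to guarantee divergence at infinity (as $e^{-x}$ on $\R$ shows). What saves the argument is the quantitative bound $f'(1) - f'(0) \geq \delta A C^{-(n+1)} d(\omega_0, \omega_1)^2$ from Proposition 4.1, combined with the fact that orbits of non-compact one-parameter subgroups of $\mathrm{Aut}_0(X)$ cover infinite Mabuchi distance in $S$. Obtaining a uniform $L^\infty$-bound on $\omega_t$ along such orbits, needed to apply Proposition 4.1 uniformly, is a further technical point which should follow from standard estimates for pullbacks under holomorphic diffeomorphisms. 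A cleaner but more abstract alternative would be to view $\F_\mu$ as a norm functional in a Kempf-Ness picture for the $\mathrm{Aut}_0(X)$-action on $\mathcal{H}$ and invoke the Matsushima-Lichnerowicz reductivity of $\mathrm{Aut}_0(X)$ at a csck metric.
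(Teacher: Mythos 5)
Your second step---identifying the tangent space of $S$ at a minimizer with the space of real functions whose complex gradient is holomorphic, and reading off the annihilation of $G=d\F_\mu$ from first-order stationarity---is correct and is essentially what the paper does. The gap is in the existence step. Your route to properness of $\F_\mu$ on $S$ hinges on applying the quantitative estimate of Proposition 4.1 uniformly along the entire orbit of a non-compact one-parameter subgroup, which requires a bound $\omega_t=g_t^*\omega_{u_0}\leq C\omega$ with $C$ independent of $t$. No such bound exists: along a non-compact one-parameter subgroup of $\mathrm{Aut}_0(X)$ the pullback metrics $g_t^*\omega_{u_0}$ concentrate near the attracting set of the flow (already on $\P^1$ with $g_t(z)=e^tz$ the density of $g_t^*\omega_{FS}$ at $z=0$ grows like $e^{2t}$), so the constant $\delta A/C^{n+1}$ in Proposition 4.1 degenerates to $0$. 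The point you flag as ``a further technical point which should follow from standard estimates'' is in fact false as stated, and it is the crux of the argument; it is likewise unclear that the Mabuchi distance covered per unit time stays bounded below.

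The paper circumvents this with two ingredients your proposal does not use. First, it treats the special case $\mu=\omega_{u_0}^n$: then $d\F_\mu|_{u_0}=\mu-\omega_{u_0}^n=0$, so $u_0$ is a critical point, and along each orbit ray $f(t)=\F_\mu(u_t)$ is convex with $f'(0)=0$; if $f$ were bounded it would be constant, hence affine on every finite segment, hence by the qualitative part of Proposition 4.1 (which only needs $\omega_t\leq C\omega$ on that finite segment, where it certainly holds) $\omega_t$ would be constant and the ray trivial in $S$. This yields properness on every nontrivial ray with no uniform estimate at infinity, and finite-dimensionality of $S$ upgrades it to properness on $S$. Second, Lemma 4.2, which bounds $|\F_\mu-\F_\nu|$ uniformly on all of $\h$, transfers properness from the special choice $\mu=\omega_{u_0}^n$ to an arbitrary $\mu$. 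Without these two steps your existence argument does not close; with them, the quantitative half of Proposition 4.1 and the uniform $L^\infty$ control you were trying to establish are not needed here at all.
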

\begin{proof} Any holomorphic vector field $V$ determines a geodesic ray starting at $u_0$ obtained by following the flow of $V$, and $S$ is the union of all such rays. If $\mu=\omega_{u_0}^n$, then $u_0$ is a critical point of $\F_\mu$. Since $\F_\mu$ is strictly convex along each ray it follows that $\F_\mu$ is proper on each ray if $\mu=\omega_{u_0}^n$. Since $S$ is of finite dimension, it follows that $\F_\mu$ is proper on $S$ in this case. By lemma 4.2 this implies that $\F_\mu$ is proper on $S$ for any choice of $\mu$, and so must have a minimum. For the last claim we let $g_t=\exp(tV)$ be the ray determined by a holomorphic field $V$. Then the Lie derivative of $\omega$ with respect to $V$ equals
$$
dV\rfloor \omega=i\ddbar h^V_\omega
$$
and also
$$
(d/dt)g_t^*\omega=(d/dt)(\omega +i\ddbar u_t) = i\ddbar\dot u_t.
$$
Hence $h^V_\omega=\dot u_t|_{t=0}$ so $d\mathcal{F}_\mu.h^V_\omega=d\mathcal{F}_\mu.\dot u_t=0$ if $\omega$ is a critical point of $\mathcal{F}_\mu$.
\end{proof}

By Proposition 4.3 this implies that we can find a $v_0$ that solves 
$$
D_{v_0}F|_{u_0}= -G(u_0).
$$
We now apply this when $u_0$ is a critical point of $\M$. Notice that $\M$ is  invariant under the action of Aut$_0(X)$, since it is linear along the flow of holomorphic vector fields and is bounded from below, cf Corollary 1.2.  Hence the point we get after applying the automorphism is still a critical point of $\M$. If $u_1$ is another critical point  we can apply the same argument to $u_1$. The proof of Theorem 4.4 then applies without change and we see that after applying these automorphisms  $\omega_{u_0}=\omega_{u_1}$. Therefore we have proved
\begin{thm} Assume that $\omega_{u_0}$ and $\omega_{u_1}$ are metrics of constant scalar curvature. Then there is an automorphism $g$ in Aut$_0(X)$  such that
$$
g^*(\omega_{u_1})=\omega_{u_0}.
$$
\end{thm}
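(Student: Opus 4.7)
The plan is to reduce the general case to the situation handled in Theorem 4.4 by first moving each critical point along the orbit of $\mbox{Aut}_{0}(X)$ so that the obstruction to solving the linearized equation $D_vF=-G$ disappears, and then running the perturbation argument of Theorem 4.4 without change.

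First I would fix a strictly positive smooth volume form $\mu$ on $X$ normalized so that $\int_X d\mu=\int_X\omega^n$. Given a critical point $u_0$ of $\M$, apply Proposition 4.5 to obtain $g_0\in\mbox{Aut}_{0}(X)$ such that, after replacing $u_0$ by the potential $\tilde u_0$ of $g_0^*\omega_{u_0}$, the 1-form $G(\tilde u_0)$ defined by the signed measure $\mu-\omega_{\tilde u_0}^n$ annihilates every real function whose complex gradient is a holomorphic vector field. Since $\M$ is invariant under $\mbox{Aut}_{0}(X)$, the potential $\tilde u_0$ remains a critical point of $\M$; do the same for $u_1$ to obtain $\tilde u_1$. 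By Proposition 4.3 applied at $\tilde u_0$ and $\tilde u_1$, the equations
\[
D_{v_0}F|_{\tilde u_0}=-G(\tilde u_0),\qquad D_{v_1}F|_{\tilde u_1}=-G(\tilde u_1)
\]
now admit smooth solutions $v_0,v_1$. From here the proof of Theorem 4.4 applies verbatim: Taylor-expanding the differential of the perturbed functional $\M_s:=\M+s\F_\mu$ at $\tilde u_i+sv_i$ gives $|F_s(\tilde u_i+sv_i).w|\le C s^2\sup_X|w|$, and connecting the two perturbed endpoints by the weak $C^{1,1}$-geodesic $u_t^s$ and using the convexity of $\M$ along weak geodesics (Theorem 3.3) together with the linearity of $\e$ and Lemma 3.5 yields
\[
s\bigl((d/dt)|_1-(d/dt)|_0\bigr)I_\mu(u_t^s)\le O(s^2).
\]
Dividing by $s$ and invoking the quantitative strict convexity of $I_\mu$ from Proposition 4.1, together with the $s$-uniform $L^\infty$ bound on $\omega_{u_t^s}$ from Berman--Demailly, yields $d(\omega_{\tilde u_0^s},\omega_{\tilde u_1^s})^2\le C's$. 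Letting $s\to 0$ forces $d(\omega_{\tilde u_0},\omega_{\tilde u_1})=0$, whence Chen's theorem identifies the two metrics; undoing the automorphisms produces the desired $g=g_1\circ g_0^{-1}\in\mbox{Aut}_{0}(X)$ with $g^*\omega_{u_1}=\omega_{u_0}$.

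The genuine difficulty is absorbed into Proposition 4.5: without it one would have no preferred representative in each $\mbox{Aut}_{0}$-orbit at which the Lichnerowicz-type equation $D_vF=-G$ is solvable, and indeed solvability can fail at an arbitrary critical point precisely when holomorphic vector fields are present. The point of Proposition 4.5 is that minimizing $\F_\mu$ over the finite-dimensional orbit $S=\{g^*\omega_{u_0}:g\in\mbox{Aut}_{0}(X)\}$ produces a critical point whose Euler--Lagrange equation says exactly that $G$ kills the Hamiltonians of holomorphic vector fields; existence of such a minimizer in turn rests on the strict convexity of $\F_\mu$ along each geodesic ray generated by a holomorphic vector field together with Lemma 4.2, which decouples the resulting properness from the particular choice of $\mu$. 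Once this purely geometric rearrangement has been performed, the analytic heart of the argument is identical to Theorem 4.4.
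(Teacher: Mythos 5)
Your proposal is correct and follows essentially the same route as the paper: minimize $\F_\mu$ over the $\mbox{Aut}_{0}(X)$-orbit (this is Proposition 4.6 in the paper's numbering, not 4.5) to obtain a representative at which $d\F_\mu$ annihilates all functions with holomorphic complex gradient, invoke the invariance of $\M$ under $\mbox{Aut}_{0}(X)$ so that the moved point is still critical, solve $D_{v_0}F=-G$ via Proposition 4.3, and then run the perturbation argument of Theorem 4.4 unchanged. No substantive differences.
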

{\it Remark.} This argument follows an idea by Bando and Mabuchi in \cite{b-m} to prove  uniqueness of K\"ahler-Einstein metrics and it may be illuminating to compare the arguments.  Bando and Mabuchi consider another perturbation defined by Aubin's continuity method which applies when $[\omega]= c_1(X)$ and can be written
$$
\mbox{Ric}\omega_s =(1-s)\omega_s +s\alpha
$$
where $\alpha$ is a fixed K\"ahler form. Using a bifurcation technique that plays the role of our Proposition 4.6, they show that a particular choice $\omega_0$
in  the Aut$_0$-orbit of a K\"ahler-Einstein metric extends to a smooth curve of solutions to the perturbed equations above. Using a priori estimates, they show that this curve extends to a smooth curve for $s$ in $[0,1]$. For $s=1$ it is easy to see that the perturbed equation has a unique solution, and it then follows from the invertibility of the linearized equations that we have uniqueness for $s=0$ as well. One simplifying feature of our argument, which is based on convexity, is that it is enough to consider small, in fact even infinitesimal, perturbations of the original problem.\qed
\subsection{\label{sub:Calabi's-extremal-metrics}Calabi's extremal metrics}

The\emph{ extremal} K\"ahler metrics (in a given K\"ahler class) introduced
by Calabi \cite{ca-2}, generalizing constant scalar curvature metrics,
are defined as the critical points of the $L^{2}-$norm of the scalar
curvature, i.e. the functional $\omega\mapsto\int_{X}R_{\omega}^{2}\omega^{n}$
on the space of K\"ahler metrics in a the fixed K\"ahler class. As shown
by Calabi this equivalently means that the gradient  of $R_{\omega}$
is a holomorphic vector field, or more precisely that the $(1,0)$ field $V$ with real part equal to the gradient of $R_\omega$ is holomorphic. We shall now generalize Theorem 4.7 to extremal K\"ahler metrics. This builds on the fundamental work in \cite{fu-ma}, \cite{gu} and \cite{si}, which for completeness we develop from scratch in a form suitable in this context.

The holomorphic vector field $V$ will in general depend on the extremal metric. The first step in the proof, following \cite{fu-ma}, is to prove that one can obtain a unique 'extremal vector field' by fixing a compact subgroup $K$ of Aut$_0(X)$ and requiring that the flow of $\Im V$ lie in $K$. Once this is done we, following \cite{gu} and  \cite{si},  modify the Mabuchi functional to obtain another functional $\M_V$, defined on $\h_V$, the space of K\"ahler metrics invariant under $\Im V$, by adding a term $\e_V$, depending on $V$. The  extremal metrics corresponding to the now fixed field $V$ are now  critical points of $\M_V$ on $\h_V$. The energy  functional $\e_V$ is linear, so $\M_V$ is also convex along geodesics in $\h_V$. Given all this, the proof of the uniqueness of extremal metrics follows the same lines as before.

We start with a few preparations. 
Let $\omega$ be any K\"ahler form on $X$. Recall that if $h$ is a complex valued function on $X$ we define a vector field of type $(1,0)$ by 
$$
V\rfloor \omega=i\dbar h.
$$ 
$V=:\nabla_\omega h$ is called the complex gradient of $h$ and we have that
$$
2\Im V\rfloor\omega= d^c\Im h +d\Re h.
$$
(Contrary to our earlier conventions we here write $d^c$ for $i(\dbar-\partial)$.)
Therefore we see that the Lie derivative of $\omega $ along $\Im V$, $L_V\omega=d \Im V\rfloor\omega= (1/2)dd^c\Im h$ vanishes if and only if $\Im h$ is a constant and in that case $\Re h/2$ is a  Hamiltonian of $\Im V$. Then  the real part of $V$ is  the real gradient of $h/2$, so with $h=R_\omega$ we see that $\omega$ is extremal if and only if the complex gradient of $R_\omega$ is holomorphic. We  normalize by choosing $h$ so that
$$
\int_X h\omega^n=0.
$$
Then $h$ is uniquely determined by $V$ and $\omega$, and we write $h=h^V_\omega$. Note that with this normalization, $\omega$ is invariant under the flow of $\Im V$ if and only if $h^V_\omega$ is real valued. Denote by $H(X)$ the space of holomorphic vector fields that arise as complex gradients. Note that $h^{V+W}_\omega=h^V_\omega +h^W_\omega$ and we also have
\begin{lem} If $\omega_u= \omega_0+i\ddbar u$ and $V\in H(X)$, then
$$
h^V_{\omega_u}=h^V_{\omega_0}+ V(u).
$$
\end{lem}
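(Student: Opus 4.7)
The plan is to verify the identity in two stages: first show that the two sides agree up to an additive constant via the defining relation $V \rfloor \omega = i\dbar h^V_\omega$, and then show that the constant vanishes via the normalization $\int_X h^V_\omega\, \omega^n = 0$.

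For the first stage I directly compute $V \rfloor (\omega_u - \omega_0) = V \rfloor i\ddbar u$. Writing $V = \sum_j V^j \partial/\partial z^j$ in local holomorphic coordinates, the holomorphicity $\dbar V^j = 0$ gives
$$
V \rfloor i\ddbar u \;=\; i \sum_{j,k} V^j \frac{\partial^2 u}{\partial z^j \partial \bar z^k}\, d\bar z^k \;=\; i\,\dbar\bigl(V(u)\bigr).
$$
Combining with $V \rfloor \omega_0 = i\dbar h^V_{\omega_0}$ yields $V \rfloor \omega_u = i\dbar(h^V_{\omega_0} + V(u))$, and comparing with $V \rfloor \omega_u = i\dbar h^V_{\omega_u}$ gives $h^V_{\omega_u} = h^V_{\omega_0} + V(u) + c$ for some constant $c = c(u)$ independent of $x$.

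For the second stage I show $c=0$ by integrating against $\omega_u^n$. Using the normalization of $h^V_{\omega_u}$, this reduces to the identity $g(u) := \int_X (h^V_{\omega_0} + V(u))\,\omega_u^n = 0$. I prove this by showing $t \mapsto g(tu)$ is constant on $[0,1]$: at $t=0$ it vanishes by the normalization of $h^V_{\omega_0}$. Differentiating, the contribution $\int_X V(u)\,\omega_{tu}^n$ must cancel $n\int_X (h^V_{\omega_0} + V(tu))\, i\ddbar u \wedge \omega_{tu}^{n-1}$. Integration by parts shifts $i\ddbar$ onto $h^V_{\omega_0} + V(tu) = h^V_{\omega_{tu}} - c(tu)$; the constant drops under $i\ddbar$, and since $L_V \omega_{tu} = d(V \rfloor \omega_{tu}) = i\ddbar h^V_{\omega_{tu}}$, the term becomes $\int_X u\, L_V(\omega_{tu}^n)$. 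By Stokes (valid for the complex field $V$ by $\C$-linearity on its real and imaginary parts), $\int_X L_V(u\,\omega_{tu}^n) = 0$, so $\int_X u\, L_V(\omega_{tu}^n) = -\int_X V(u)\,\omega_{tu}^n$, yielding the required cancellation and hence $g'(t) \equiv 0$.

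The first stage is essentially a one-line computation once holomorphicity of $V$ is invoked. The main subtlety, modest but unavoidable, is pinning down the constant $c$ in the second stage: one must choose an appropriate family along which to differentiate and carry out the integration by parts carefully, keeping track of the distinction between the pointwise identity (which determines $h^V_{\omega_u}$ only modulo a constant) and its normalized, integrated version.
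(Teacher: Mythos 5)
Your proof is correct and follows essentially the same route as the paper: first the pointwise identity $V\rfloor\omega_u=i\dbar(h^V_{\omega_0}+V(u))$ pins down $h^V_{\omega_u}$ up to a constant $c(u)$, and then $c(u)=0$ is obtained by differentiating the normalization integral along $t\mapsto tu$ and integrating by parts (your Lie-derivative/Stokes computation is just a derivation of the paper's ``technical lemma'' $n\int_X v\, i\ddbar u\wedge\omega^{n-1}=-\int_X\nabla_\omega v(u)\,\omega^n$). No gaps.
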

In the proof of this we shall use  a technical lemma that will reappear later several times.
\begin{lem} If $u$ and $v$ are functions on $X$
$$
n\int_X v\,i\ddbar u\wedge\omega^{n-1}=-\int_X \nabla_\omega v(u) \,\omega^n.
$$
\end{lem}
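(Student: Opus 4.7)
The plan is to establish this standard integration-by-parts identity in two steps: first move $\bar\partial$ from $u$ onto $v$ using Stokes' theorem, and then recognise the resulting pointwise expression as $-\nabla_\omega v(u)\,\omega^n$ by unwinding the defining relation $V \rfloor \omega = i\bar\partial v$ for the complex gradient.

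For the first step, I would apply Stokes' theorem to $v\,\partial u \wedge \omega^{n-1}$ on the closed manifold $X$. Since $d\omega^{n-1}=0$, and since the $(2,0)$-piece $\partial v \wedge \partial u \wedge \omega^{n-1}$ vanishes for bidegree reasons on an $n$-dimensional manifold, the only surviving contributions in $d(v\,\partial u \wedge \omega^{n-1})$ are $\bar\partial v \wedge \partial u \wedge \omega^{n-1}$ and $-v\,\partial\bar\partial u \wedge \omega^{n-1}$ (using $\bar\partial\partial=-\partial\bar\partial$). Stokes' theorem therefore yields
$$\int_X v\,\partial\bar\partial u \wedge \omega^{n-1} \;=\; \int_X \bar\partial v \wedge \partial u \wedge \omega^{n-1}.$$
It is essential to arrange the integration by parts so that it is $\bar\partial$, rather than $\partial$, that lands on $v$, since the complex gradient $V$ is of type $(1,0)$ and is paired against $\bar\partial v$.

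For the second step, I would set $V=\nabla_\omega v$ and apply the interior product $V \rfloor$ to the form $\partial u \wedge \omega^n$. On the one hand, this form has bidegree $(n+1,n)$ on an $n$-dimensional manifold and therefore vanishes identically. On the other hand, the derivation rule for interior products gives $V \rfloor(\partial u \wedge \omega^n) = V(u)\,\omega^n - \partial u \wedge (V\rfloor\omega^n)$, while the standard identity $V\rfloor\omega^n = n\,(V\rfloor\omega)\wedge\omega^{n-1}= n\,i\bar\partial v \wedge \omega^{n-1}$ converts the second piece into $n\,\partial u \wedge i\bar\partial v \wedge \omega^{n-1}$. Using the anticommutation $\partial u \wedge \bar\partial v = -\bar\partial v \wedge \partial u$, one reads off the pointwise identity $n\,i\bar\partial v \wedge \partial u \wedge \omega^{n-1} = -\nabla_\omega v(u)\,\omega^n$. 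Multiplying the IBP formula from the previous paragraph by $ni$ and substituting then concludes the proof. The only thing requiring care here is the sign and bidegree bookkeeping that forces $\bar\partial$ to fall on $v$; there is no genuine obstacle.
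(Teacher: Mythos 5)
Your proof is correct and follows essentially the same route as the paper's one-line argument: integrate by parts to move $\bar\partial$ onto $v$, then identify $n\,i\bar\partial v\wedge\partial u\wedge\omega^{n-1}$ pointwise with $-\nabla_\omega v(u)\,\omega^n$ (the paper phrases this last step as the identity $\nabla_\omega v(u)=\langle \partial u,\partial \bar v\rangle_\omega$, while you derive it by contracting the vanishing $(n+1,n)$-form $\partial u\wedge\omega^n$ with $V$; the two are equivalent). The sign and bidegree bookkeeping in your write-up checks out.
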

\begin{proof} This follows from integration by parts and noting that $\nabla_\omega v(u)=\langle \partial u,\partial \bar v\rangle_\omega$.
\end{proof}
To prove Lemma 4.8 we first note that $i\dbar(h^V_{\omega_0}+ V(u))=V\rfloor\omega_u$, so $h^V_{\omega_u}=h^V_{\omega_0}+ V(u)+c(u)$, where $c(u)$ is constant on $X$. Moreover
$$
0=(d/dt)\int_X h^V_{\omega_{tu}}\omega_{tu}^n=\int_X (V(u)+\dot c(tu))\omega_{tu}^n+n\int_X h^V_{\omega_{tu}}i\ddbar u\wedge\omega_{tu}^{n-1}.
$$
By the technical lemma 4.8, $\dot c(tu)=0$, so $c(u)=0$ since $c(0)=0$.

We next, following \cite{fu-ma},  define a bilinear form on $H(X)$ by
$$
\langle V,W\rangle_\omega =\int_X h^V_\omega \, h^W_\omega \omega^n.
$$
\begin{prop} $\langle \, ,\, \rangle_\omega$ only  depends on the cohomology class $[\omega]$.
\end{prop}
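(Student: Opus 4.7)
The plan is to connect any two K\"ahler forms $\omega_0$ and $\omega_1$ in the class $[\omega]$ by the smooth path $\omega_t = \omega_0 + i\ddbar u_t$, $t\in[0,1]$, for a smooth family of potentials $u_t$, and then to show that $\tfrac{d}{dt}\langle V,W\rangle_{\omega_t}=0$. Once this is done the proposition follows, since any two cohomologous K\"ahler forms can be joined by such a path (e.g.\ using $u_t=tu$ where $\omega_1=\omega_0+i\ddbar u$).

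To differentiate, first use Lemma 4.8 to identify $h^V_{\omega_t}=h^V_{\omega_0}+V(u_t)$ and likewise for $W$. Differentiating in $t$ gives $\tfrac{d}{dt}h^V_{\omega_t}=V(\dot u_t)$ and of course $\tfrac{d}{dt}\omega_t^n = n\,i\ddbar \dot u_t\wedge \omega_t^{n-1}$. Thus
\begin{equation*}
\tfrac{d}{dt}\langle V,W\rangle_{\omega_t}=\int_X\bigl(V(\dot u_t)\,h^W_{\omega_t}+h^V_{\omega_t}\,W(\dot u_t)\bigr)\omega_t^n+n\int_X h^V_{\omega_t}h^W_{\omega_t}\,i\ddbar \dot u_t\wedge\omega_t^{n-1}.
\end{equation*}

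The key step is to apply the integration-by-parts formula of Lemma 4.9 to the last term with $v=h^V_{\omega_t}h^W_{\omega_t}$ and $u=\dot u_t$. Since $\nabla_{\omega_t}$ satisfies the Leibniz rule and since by definition $\nabla_{\omega_t}h^V_{\omega_t}=V$, $\nabla_{\omega_t}h^W_{\omega_t}=W$, one gets
\begin{equation*}
\nabla_{\omega_t}\bigl(h^V_{\omega_t}h^W_{\omega_t}\bigr)=h^V_{\omega_t}W+h^W_{\omega_t}V,
\end{equation*}
so Lemma 4.9 yields
\begin{equation*}
n\int_X h^V_{\omega_t}h^W_{\omega_t}\,i\ddbar \dot u_t\wedge \omega_t^{n-1}=-\int_X\bigl(h^V_{\omega_t}W(\dot u_t)+h^W_{\omega_t}V(\dot u_t)\bigr)\omega_t^n,
\end{equation*}
which exactly cancels the first integral. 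Hence $\tfrac{d}{dt}\langle V,W\rangle_{\omega_t}\equiv 0$ and the pairing depends only on $[\omega]$.

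The only mildly delicate point is the Leibniz computation $\nabla_\omega(fg)=f\nabla_\omega g+g\nabla_\omega f$, which is immediate from the defining relation $(\nabla_\omega f)\rfloor\omega=i\dbar f$ and the fact that $\dbar$ is a derivation; no serious obstacle is expected.
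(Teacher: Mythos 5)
Your proposal is correct and follows essentially the same route as the paper: differentiate along a path $\omega_t=\omega_0+i\ddbar u_t$ using Lemma 4.8, and cancel the two resulting terms via the technical Lemma 4.9 applied to $v=h^V_{\omega_t}h^W_{\omega_t}$. The paper compresses this into the single phrase ``by the technical lemma, this expression vanishes''; your explicit Leibniz computation $\nabla_\omega(fg)=f\nabla_\omega g+g\nabla_\omega f$ is exactly the detail being suppressed there, and it is valid since contraction with the nondegenerate form $\omega$ is injective on $(1,0)$ vector fields.
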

\begin{proof} We take a curve of metrics $\omega_t=\omega+i\ddbar u_t$ in $[\omega]$ and differentiate:
$$
(d/dt)\int_X h^V_{\omega_t} \, h^W_{\omega_t} \omega_t^n=
\int_X \left (V(\dot u)h^W_{\omega_t}+W(\dot u)h^V_{\omega_t}\right)\omega_t^n+
n\int_X h^V_{\omega_t} \, h^W_{\omega_t} i\ddbar\dot u\wedge\omega_t^{n-1}.
$$
By the technical lemma, this expression vanishes which proves the proposition.
\end{proof}
Since the cohomology class is fixed in our discussion we can thus consider the form as fixed and write $\langle\,,\,\rangle_\omega=\langle\,,\,\rangle$. 

Let now $K$ be a compact subgroup of Aut$_0(X)$ and denote by $H_K(X)$ the subspace of $H(X)$ consisting of holomorphic vector fields $V$ such that the flow of $\Im V$ lies in $K$. 
\begin{prop} For any compact subgroup $K$ of Aut$_0(X)$ the restriction of $\langle\,,\,\rangle$ to $H_K(X)$ is real valued and positive definite; in particular non degenerate.
\end{prop}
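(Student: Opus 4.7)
My plan is to exploit the fact, established in Proposition 4.10, that the pairing $\langle\,,\,\rangle$ depends only on the cohomology class $[\omega]$. This means we are free to choose any convenient representative in $[\omega]$ when evaluating the form on $H_K(X)$.

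First, I would choose a $K$-invariant representative $\omega \in [\omega]$. This is possible because $K$ is compact: starting from any K\"ahler form $\omega'$ in the class, the average $\omega := \int_K k^* \omega' \, dk$ with respect to Haar measure on $K$ is a smooth closed $(1,1)$-form which is $K$-invariant, and positivity is preserved because $K$ acts by biholomorphisms and the average of positive forms is positive. Then for any $V \in H_K(X)$, the flow of $\Im V$ lies in $K$, hence preserves $\omega$. As noted just after Lemma 4.8, invariance of $\omega$ under the flow of $\Im V$ is equivalent to the normalized potential $h^V_\omega$ being real-valued.

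Consequently, for all $V, W \in H_K(X)$ we have
\[
\langle V, W\rangle \;=\; \int_X h^V_\omega \, h^W_\omega \, \omega^n,
\]
which is the integral of a real function against a real measure, hence real. This gives the first statement.

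For positive definiteness, suppose $V \in H_K(X)$ with $\langle V, V\rangle = 0$. Since $h^V_\omega$ is real and $\omega^n$ is a strictly positive volume form, we conclude $h^V_\omega \equiv 0$. By the defining relation $V\rfloor\omega = i\bar\partial h^V_\omega = 0$, and because $\omega$ is non-degenerate, this forces $V = 0$. Thus the restricted form is positive definite, and in particular non-degenerate. I do not anticipate any serious obstacle here; the argument is essentially a direct consequence of the cohomological invariance of the pairing together with the elementary averaging trick that provides a $K$-invariant representative.
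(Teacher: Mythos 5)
Your proposal is correct and follows essentially the same route as the paper: average over $K$ to obtain a $K$-invariant representative of $[\omega]$, use the cohomological invariance of the pairing, observe that $h^V_\omega$ is then real valued for $V\in H_K(X)$, and deduce both claims. The paper leaves the positive definiteness step implicit ("both claims follow directly"), whereas you spell out that $\int_X (h^V_\omega)^2\omega^n=0$ forces $h^V_\omega\equiv 0$ and hence $V=0$ by non-degeneracy of $\omega$; this is exactly the intended argument.
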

\begin{proof} Taking averages of an arbitrary K\"ahler form in our class, we can represent our form by a $K$-invariant K\"ahler form $\omega$. Then $h^V_\omega$ is real valued if $V$ lies in $H_K$. Both claims of the proposition follow directly from this.
\end{proof}
For a holomorphic vector field $V$ in $H(X)$ we  put
$$
\mathcal{C}^\infty_V=\{u\in \mathcal{C}^\infty(X;\R); \Im(V)u=0\}
$$
and denote by Aut$_0(X,V)$  the  subgroup  of Aut$_0(X)$ of automorphisms commuting with the flow of $V$.

\begin{prop} Let $V$ be a vector field in $H(X)$ and $\omega_0$ a K\"ahler form invariant under the flow of $\Im V$. Then a real valued function $u$ lies in $\mathcal{C}^\infty_V$ if and only if the vector fields $\Im \nabla_{\omega_0}u$ and $\Im V$ commute. If moreover $\nabla_{\omega_0}u$ is holomorphic,  then $W$ lies in the Lie algebra of Aut$_0(X,V)$ .
\end{prop}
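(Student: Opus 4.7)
The plan is to reduce both statements to a short Cartan-formula computation of $[\Im V,\Im W]\rfloor\omega_0$, where $W:=\nabla_{\omega_0} u$.

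First I would record the identity $2\,\Im W \rfloor \omega_0 = du$, which is the special case $h=u$ real of the formula $2\Im V\rfloor\omega= d\Re h+d^c\Im h$ stated at the start of Section~4.1. Since by hypothesis $L_{\Im V}\omega_0=0$, Cartan's formula $L_Y(X\rfloor\alpha)=[Y,X]\rfloor\alpha+X\rfloor L_Y\alpha$ applied with $Y=\Im V$, $X=\Im W$, $\alpha=\omega_0$, together with the commutation $L_{\Im V}\circ d = d\circ L_{\Im V}$, yields
\[
[\Im V,\Im W]\rfloor\omega_0 \;=\; L_{\Im V}\!\bigl(\tfrac{1}{2}\,du\bigr) \;=\; \tfrac{1}{2}\,d\bigl(\Im V(u)\bigr).
\]
Non-degeneracy of $\omega_0$ then turns this into the equivalence $[\Im V,\Im W]=0 \iff \Im V(u)$ is constant on $X$.

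Second, to promote ``constant'' to ``zero'' I would use that the $\Im V$-flow preserves the volume form $\omega_0^{\,n}$, so Cartan's formula applied to the top form $u\,\omega_0^{\,n}$ and Stokes' theorem on the compact manifold $X$ give
\[
\int_X \Im V(u)\,\omega_0^{\,n}\;=\;\int_X L_{\Im V}(u\,\omega_0^{\,n})\;=\;0,
\]
which forces any constant value of $\Im V(u)$ to vanish. This completes the equivalence $u\in\mathcal{C}^{\infty}_V \iff [\Im V,\Im W]=0$.

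For the second assertion, assume in addition that $W$ is holomorphic, so $W$ lies in $H(X)\subset\mathrm{Lie}\,\mathrm{Aut}_0(X)$. Holomorphic $(1,0)$ fields satisfy $[W,\bar V]=[\bar W,V]=0$, so expanding $\Im W=(W-\bar W)/(2i)$ and similarly for $V$ gives $[\Im W,\Im V]=-\tfrac{1}{2}\Re[W,V]$ and $[\Re W,\Re V]=\tfrac{1}{2}\Re[W,V]$. By the first part, $u\in\mathcal{C}^{\infty}_V$ forces $\Re[W,V]=0$; since $[W,V]$ is itself $(1,0)$, its imaginary part is $-J$ applied to its real part, so $[W,V]=0$ outright. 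Hence $[\Re W,\Re V]=0$, the real flows of $W$ and $V$ commute, and $W\in\mathrm{Lie}\,\mathrm{Aut}_0(X,V)$, as required. The only place demanding any care is the bookkeeping of signs and factors in the real/imaginary decomposition of brackets of $(1,0)$ holomorphic fields; everything else is Cartan's formula together with non-degeneracy of $\omega_0$ and Stokes' theorem, so I do not anticipate a serious obstacle.
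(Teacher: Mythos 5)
Your proof is correct and follows essentially the same route as the paper: the paper phrases the first part via the standard symplectic identity $[W_u,W_v]=W_{\{u,v\}}$ for Hamiltonian fields, which is exactly the Cartan-formula computation you carry out directly, and the second part is the same reduction of $[\Im W,\Im V]=0$ to $[W,V]=0$ using that brackets of $(1,0)$ holomorphic fields with conjugates vanish. You are in fact slightly more careful than the paper on two points it leaves implicit, namely that the constant value of $\Im V(u)$ must vanish (your Stokes argument against $\omega_0^n$) and the explicit real/imaginary bookkeeping showing a $(1,0)$ field with vanishing real part is zero.
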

\begin{proof}
For $v$ real valued, let $W_v=2\Im\nabla_{\omega_0}v$ be the vector field determined by the Hamiltonian $v$ and the symplectic form $\omega_0$, $W_v\rfloor \omega_0=dv$. Then
$$
[W_u, \Im V]= W_{\{u,h^V_{\omega_0}\}}
$$
(where $\{\,,\,\}$ is the Poisson bracket).
Since for any $u$ and $v$, $\{u,v\}=W_vu$ we see that $\{u,h^V_{\omega_0}\}=2\Im V u$, so $\Im V u=0$ if and only if $W_u$ and $\Im V$ commute. If we also assume that $\nabla_{\omega_0}u$ is holomorphic, then $\Im \nabla_{\omega_0}u$ and $\Im V$ commute if and only if  $\nabla_{\omega_0}u$ and $V$ commute, which means that  $\nabla_{\omega_0}u$ lies in the Lie algebra of Aut$_0(X,V)$ .
\end{proof}

Finally, given a field $V$ in $H(X)$ we define an associated energy functional $\e_V$ by letting
$$
d\e_V|_{\omega}.\dot u:= \int_X \dot u \, h^V_\omega\, \omega^n.
$$
The next proposition shows that this indeed defines a function on the  subspace $\h_V$ of $\h$ consisting of K\"ahler metrics invariant under $\Im V$ , and also computes its second derivative along a curve.
\begin{prop} Let $\omega_u=\omega_0 +i\ddbar u$ where $u\in \mathcal{C}^\infty_V$ depends smoothly on two real parameters $s$ and $t$. Assume that $\omega_0$ is invariant under $\Im V$.Then
$$
(d/ds)\int_X \dot u_t h^V_{\omega_u}\omega_u^n=
\int_X (\ddot u_{s t}-( \partial \dot u_t,\partial\dot u_s)_{\omega_u} ) h^V_{\omega_u} \omega_u^n,
$$
where $ ( \,,\,)_{\omega_u}=\Re \langle \,,\,\rangle_{\omega_u}$ is the real scalar product defined by $\omega_u$.
\end{prop}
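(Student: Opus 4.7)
The plan is to compute the $s$-derivative directly by Leibniz, obtaining three terms, then use Lemma 4.9 to integrate by parts the ``Laplacian" term against $h^V_{\omega_u}\dot u_t$, and finally recognize that a cross term cancels and the remaining term has the shape stated.

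First I would differentiate under the integral: by Leibniz,
\[
\frac{d}{ds}\int_X \dot u_t \, h^V_{\omega_u}\,\omega_u^n
\;=\;
\int_X \ddot u_{st}\,h^V_{\omega_u}\,\omega_u^n
\;+\;
\int_X \dot u_t\,\bigl(\tfrac{d}{ds}h^V_{\omega_u}\bigr)\,\omega_u^n
\;+\;
n\int_X \dot u_t\,h^V_{\omega_u}\,i\ddbar \dot u_s\wedge \omega_u^{n-1}.
\]
For the second integrand, Lemma 4.8 gives $h^V_{\omega_u}=h^V_{\omega_0}+V(u)$, so $\tfrac{d}{ds}h^V_{\omega_u}=V(\dot u_s)$. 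For the third integrand, apply the technical Lemma 4.9 with $v=\dot u_t\, h^V_{\omega_u}$ (real valued, using that $\omega_0$ and hence $\omega_u$ is $\Im V$-invariant so $h^V_{\omega_u}$ is real) and $u=\dot u_s$ to get
\[
n\int_X \dot u_t\,h^V_{\omega_u}\,i\ddbar \dot u_s\wedge \omega_u^{n-1}
\;=\;
-\int_X \nabla_{\omega_u}\bigl(\dot u_t\, h^V_{\omega_u}\bigr)(\dot u_s)\,\omega_u^n.
\]

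Next I would expand the complex gradient as a derivation. Since $\nabla_{\omega_u}$ is the complex gradient and $V=\nabla_{\omega_u} h^V_{\omega_u}$ by definition of $h^V_{\omega_u}$, we have
\[
\nabla_{\omega_u}\bigl(\dot u_t\, h^V_{\omega_u}\bigr)
\;=\;
h^V_{\omega_u}\,\nabla_{\omega_u}\dot u_t\;+\;\dot u_t\,V,
\]
so that
\[
n\int_X \dot u_t\,h^V_{\omega_u}\,i\ddbar \dot u_s\wedge \omega_u^{n-1}
\;=\;
-\int_X h^V_{\omega_u}\,\bigl(\nabla_{\omega_u}\dot u_t\bigr)(\dot u_s)\,\omega_u^n
\;-\;\int_X \dot u_t\,V(\dot u_s)\,\omega_u^n.
\]
Combining with the previous step, the two $\int \dot u_t\, V(\dot u_s)\,\omega_u^n$ terms cancel, leaving
\[
\frac{d}{ds}\int_X \dot u_t \, h^V_{\omega_u}\,\omega_u^n
\;=\;
\int_X \ddot u_{st}\,h^V_{\omega_u}\,\omega_u^n
\;-\;\int_X h^V_{\omega_u}\,\bigl(\nabla_{\omega_u}\dot u_t\bigr)(\dot u_s)\,\omega_u^n.
\]

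Finally I would identify the inner product. In local coordinates $\nabla_{\omega_u}\dot u_t = g^{\bar k j}\partial_{\bar k}\dot u_t\,\partial_j$, so $(\nabla_{\omega_u}\dot u_t)(\dot u_s)= g^{\bar k j}\partial_j\dot u_s\,\partial_{\bar k}\dot u_t$, which after symmetrizing via its real part equals $(\ds\dot u_t,\ds\dot u_s)_{\omega_u}$ in the notation of the proposition (the integrand on the left is real, so it equals its own real part). Substituting yields the claimed formula. I do not expect a serious obstacle; the only point to be careful about is that $h^V_{\omega_u}$ is indeed real (justifying the use of the real inner product), which follows from $u\in\mathcal{C}^\infty_V$ and the $\Im V$-invariance of $\omega_0$ by Lemma 4.8.
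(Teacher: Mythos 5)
Your proof is correct and is essentially the paper's own argument: the paper's proof consists precisely of the Leibniz/technical-lemma computation you carry out (yielding the identity with the Hermitian product $\langle\partial\dot u_t,\partial\dot u_s\rangle_{\omega_u}$) followed by taking real parts using that $h^V_{\omega_u}=h^V_{\omega_0}+V(u)$ is real for $u\in\mathcal{C}^\infty_V$. You have simply written out the details, including the cancellation of the $\int_X\dot u_t V(\dot u_s)\,\omega_u^n$ terms, that the paper leaves implicit.
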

\begin{proof} A direct computation using the technical lemma shows that
$$
(d/ds)\int_X \dot u_t h^V_{\omega_u}\omega_u^n=
\int_X (\ddot u_{s t}-\langle \partial \dot u_t,\partial\dot u_s\rangle_{\omega_u} ) h^V_{\omega_u} \omega_u^n.
$$
If $u$ lies in $\mathcal{C}^\infty_V$ then $h^V_{\omega_u}=h^V_{\omega_u} +V(u)$ is real valued for all $s$ and $t$. Hence the proposition follows by taking real parts.
\end{proof}
Since this expression is symmetric in $s$ and $t$ it follows that 
$$
\int_0^1 dt\int_X\dot u_t h^V_{\omega_u}\omega_u^n
$$
is independent of the choice of path between $u_0=0$ and $u_1$, so $\e_V$ is a well defined function.  Note  also that  $d\e_V.\dot u_t$ vanishes if $\dot u_t$ is a constant, so $\e_V(u)$ decends to a function on the space of K\"ahler forms in $[\omega]$.
In addition, we see from Proposition 4.13 that
$$
(d/dt)^2\e_V(u)=\int_X (\ddot u_{tt}-|\dbar\dot u_t|^2_{\omega_u})h^V_{\omega_u} \omega_u^n.
$$
This formula extends to curves  in $\mathcal{C}^{1,1}_\C$, if we {\it define} $h^V_{\omega_u}=h^V_{\omega_0}+V(u)$, simply by approximation. Thus we see that $\e_V$ is linear along a $\mathcal{C}^{1,1}_\C$-geodesic in $\h_V$. For any pair of metrics in $\h_V$ the weak geodesic between them will remain in $\h_V$, so $\e_V$ is linear along the connecting geodesic. From this we also conclude that  the Hessian of $\e_V$ at $\omega_0$ restricted to 
 $\mathcal{C}^\infty_V$ vanishes.

We are now ready for the proof of the uniqueness of extremal metrics.
Following  \cite{fu-ma} we shall first see that the holomorphic
vector field that arises as the complex gradient of the scalar curvature $R_\omega$ of an extremal metric  is uniquely determined by the given K\"ahler
class, modulo $\mbox{Aut}\ensuremath{_{0}(X)}:$
\begin{prop} Let $K$ be a a maximal compact subgroup of Aut$_0(X)$  and let $\omega_0$ be an extremal metric in $[\omega]$. Let $V_0$ be the associated vector field $V_0=\nabla_{\omega_0}R_{\omega_0}$. Then

\medskip

\noindent 1. There is an element $g$ in Aut$_0(X)$ such that after replacing $\omega_0$ by $g^*\omega_0$ the flow of $\Im V_0$ lies in $K$,

and

\noindent 2. If $\omega_1$ is another extremal metric in the same cohomology class, with associated vector field $V_1$ such that the flow of $\Im V_1$ also lies in $K$, then $V_0=V_1$.
\end{prop}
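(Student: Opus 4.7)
The plan is to follow the strategy of Futaki and Mabuchi in \cite{fu-ma}. For part 1, I would use that the flow of $\Im V_0$ generates a compact subgroup of $\mbox{Aut}_0(X)$ and invoke the conjugacy of maximal compact subgroups. For part 2, I would identify the extremal vector field as the Riesz representative of the (cohomological) Futaki invariant with respect to the positive-definite inner product $\langle\cdot,\cdot\rangle$ on $H_K(X)$ established in Proposition 4.11.

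For part 1, I note first that the normalization of $h^{V_0}_{\omega_0}$ combined with $V_0\rfloor\omega_0 = i\bar\partial R_{\omega_0}$ forces $h^{V_0}_{\omega_0} = R_{\omega_0} - \bar R$, which is real; hence the formula $2\Im V_0\rfloor\omega_0 = d h^{V_0}_{\omega_0}$ recalled at the start of the subsection says that $\Im V_0$ is a Hamiltonian vector field, and so its flow preserves $\omega_0$. The closure $T_0$ of the one-parameter subgroup $\{\exp(t\Im V_0)\}_{t\in\R}$ in $\mbox{Aut}_0(X)$ therefore sits inside the isometry group of the compact K\"ahler manifold $(X,\omega_0)$; in particular $T_0$ is a compact abelian subgroup. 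The classical conjugacy theorem for maximal compact subgroups of a connected Lie group then yields some $g\in\mbox{Aut}_0(X)$ with $g^{-1}T_0 g\subset K$. Replacing $\omega_0$ by $g^*\omega_0$, which is still extremal in $[\omega]$ since both scalar curvature and cohomology class are biholomorphic invariants, replaces $V_0$ by the pullback vector field $g^*V_0$, whose imaginary part generates the one-parameter subgroup $g^{-1}T_0 g\subset K$.

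For part 2, I would define for $W\in H(X)$ the Futaki invariant
\[
\mathcal{F}(W) := \int_X h^W_\omega\,(R_\omega - \bar R)\,\omega^n,
\]
where $\omega$ is any K\"ahler form in $[\omega]$. The crux of the argument, and the main obstacle, is to verify that $\mathcal{F}(W)$ depends only on the cohomology class $[\omega]$ and not on the representative $\omega$; this is a classical calculation going back to Futaki. I would carry it out by differentiating along a path $\omega_t = \omega + i\partial\bar\partial u_t$, using Lemma 4.8 to get $(d/dt)h^W_{\omega_t} = W(\dot u_t)$, the standard variational formula for the scalar curvature, and the integration-by-parts identity in Lemma 4.9, with the holomorphicity of $W$ being precisely what makes the resulting boundary terms cancel. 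Granted this cohomological invariance, evaluating at the extremal metric $\omega_0$ and using $h^{V_0}_{\omega_0} = R_{\omega_0} - \bar R$ gives
\[
\mathcal{F}(W) = \int_X h^W_{\omega_0}\, h^{V_0}_{\omega_0}\, \omega_0^n = \langle W, V_0\rangle,
\]
and the same identity applied at $\omega_1$ gives $\mathcal{F}(W) = \langle W, V_1\rangle$. Subtracting, $\langle W, V_0-V_1\rangle = 0$ for every $W\in H(X)$. Finally, since $\Im V_0$ and $\Im V_1$ both lie in $\mathrm{Lie}(K)$, so does $\Im(V_0-V_1)$, and therefore $V_0-V_1\in H_K(X)$; taking $W = V_0-V_1$ and invoking the positive-definiteness of $\langle\cdot,\cdot\rangle$ on $H_K(X)$ from Proposition 4.11 gives $V_0 = V_1$.
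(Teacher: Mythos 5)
Your proof is correct and follows essentially the same route as the paper: part 1 via the observation that the flow of $\Im V_0$ consists of isometries of $(X,\omega_0)$, hence generates a compact subgroup conjugate into $K$ (the paper cites Iwasawa's theorem for the conjugacy of maximal compact subgroups), and part 2 via the identification $\langle V_i,W\rangle=\pm\mathcal{F}(W)$ with the metric-independent Futaki invariant together with the positive-definiteness of $\langle\cdot,\cdot\rangle$ on $H_K(X)$ from the preceding proposition. The only difference is that you propose to re-derive the cohomological invariance of $\mathcal{F}$ while the paper simply cites it as well known.
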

Hence, given $K$, we may speak of 'the' extremal vector field.
\begin{proof}
Since $V_0$ is the complex gradient of a real valued function $R_{\omega_0}$, the flow of $\Im V_0$ is an isometry as we have seen. Hence the flow of $\Im V_0$ lies in some maximal compact group $K_0$. By a fundamental theorem of Iwasawa, \cite{Iwasawa}, the two groups $K$ and $K_0$ are conjugate under some automorphism $g$. This proves 1. 

 Let now $V_0$ be the holomorphic vector field associated to $\omega_0$. Then $V_0$ lies in $H_K(X)$.  Let $W$ be an arbitrary field in $H_K$. Then 
$$
-\langle V_0, W\rangle=\int_X (R_{\omega_0}-\hat R_{\omega_0})h^W_{\omega_0}\omega_0^n.
$$ 
By definition, this is nothing but the negative of the Futaki invariant of $W$, \cite{fu}, which is well known  not to depend on the choice of K\"ahler metric. In particular it also equals $ -\langle V_1, W\rangle$, so since the bilinear form is non degenerate on $H_K$, it follows that $V_0=V_1$.
\end{proof}
The main theorem of this section generalizes Theorem 4.7.
\begin{thm} 
Given any two extremal K\"ahler metrics $\omega_{0}$
and $\omega_{1}$ in a given cohomology class there exists an element
$g\in\mbox{Aut}_{0}(X)$ such that $\omega_{0}=g^{*}\omega_{1}.$ \end{thm}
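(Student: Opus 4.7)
The plan is to reduce Theorem 4.15 to a modified version of Theorem 4.7 by replacing the Mabuchi functional with a suitable ``$V$-twisted'' Mabuchi functional $\M_V$, where $V$ is a canonically chosen extremal vector field, and then to repeat the convexity/perturbation argument of the constant scalar curvature case in a setting constrained by $\Im V$-invariance.

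First, I would fix a maximal compact subgroup $K$ of $\mbox{Aut}_0(X)$ and apply Proposition 4.14(1) to each of $\omega_0$, $\omega_1$: after replacing them by $g_i^*\omega_i$ for suitable $g_i\in\mbox{Aut}_0(X)$, the flows of the imaginary parts of the associated extremal vector fields $V_0$, $V_1$ lie in $K$. Proposition 4.14(2) then gives $V_0=V_1=:V$, so both modified potentials $u_0$, $u_1$ belong to the space $\h_V$ of $\Im V$-invariant K\"ahler potentials, and both satisfy $R_{\omega_{u_i}}-\bar R=h^V_{\omega_{u_i}}$. I would then introduce
\[
\M_V:=\M+\e_V\quad\text{on }\h_V,
\]
so that the differential of $\M_V$ at $u$, paired with $\dot u\in\mathcal{C}^\infty_V$, is $-\int_X\dot u(R_{\omega_u}-\bar R-h^V_{\omega_u})\omega_u^n$. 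Hence the critical points of $\M_V$ on $\h_V$ are precisely the extremal metrics with extremal vector field $V$. By Theorem 3.2, $\M$ is convex along weak geodesics in $\h$, and by the calculation after Proposition 4.13, $\e_V$ is \emph{linear} along weak $\mathcal{C}^{1,1}_\C$-geodesics lying in $\h_V$ (using that such a geodesic between two points of $\h_V$ remains in $\h_V$). Thus $\M_V$ is convex along weak geodesics in $\h_V$.

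Next, I would repeat the perturbation scheme of Theorem 4.7 within $\h_V$. Choose a strictly positive $\Im V$-invariant smooth volume form $\mu$ normalized as in Section 4 and consider
\[
\M_{V,s}:=\M_V+s\F_\mu,\quad 0<s\ll 1.
\]
By Proposition 4.1, $\F_\mu$ is strictly convex along weak $\mathcal{C}^{1,1}$-geodesics in the quantitative form given there. The analogue of Proposition 4.3, valid on $\h_V$, asserts that the equation $D_{v_0}F_V|_{u_0}=-G(u_0)$ can be solved for $v_0\in\mathcal{C}^\infty_V$ precisely when $G(u_0)=d\F_\mu|_{u_0}$ annihilates every $w\in\mathcal{C}^\infty_V$ whose complex gradient is holomorphic; by Proposition 4.12 these $w$ are exactly the Hamiltonians $h^W_{\omega_{u_0}}$ with $W$ in the Lie algebra of $\mbox{Aut}_0(X,V)$.

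The main obstacle, exactly as in Theorem 4.7, is to arrange this annihilation condition, since in general a critical point of $\M_V$ is not itself a critical point of $\F_\mu$ on the orbit of $\mbox{Aut}_0(X,V)$. I would overcome this by adapting Proposition 4.6: restrict $\F_\mu$ to the (finite-dimensional) submanifold $S_V\subset\h_V$ consisting of $g^*\omega_{u_0}$ with $g\in\mbox{Aut}_0(X,V)$. Using the strict convexity of $\F_\mu$ along the geodesic rays generated by holomorphic fields commuting with $V$, together with Lemma 4.2 to pass from the particular choice $\mu=\omega_{u_0}^n$ to the fixed $\mu$, I would conclude that $\F_\mu|_{S_V}$ is proper and hence attains a minimum at some $u_0'$. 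At this point $G(u_0')$ annihilates $h^W_{\omega_{u_0'}}$ for all $W$ in the Lie algebra of $\mbox{Aut}_0(X,V)$, which is precisely the required condition. The same construction is applied to $u_1$, yielding $u_1'$, and one notes that $u_0'$, $u_1'$ are still critical points of $\M$ (and hence of $\M_V$ on $\h_V$, with extremal field $V$) because $\M_V$ is $\mbox{Aut}_0(X,V)$-invariant (both $\M$ and $\e_V$ are, the latter since the Hamiltonian $h^V$ transforms equivariantly).

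Finally I would close the argument exactly as in Theorem 4.7. Solving $D_{v_i}F_V|_{u_i'}=-G(u_i')$ for $v_i\in\mathcal{C}^\infty_V$, the smooth points $u_0'+sv_0$ and $u_1'+sv_1$ lie in $\h_V$; connect them by a weak $\mathcal{C}^{1,1}_\C$-geodesic $u_t^s$, which stays in $\h_V$. Using the one-sided derivative estimate from Lemma 3.5 applied to $\M_V$ (which holds since $\e_V$ is smoothly differentiable up to the endpoints), the convexity of $\M_{V,s}(u_t^s)$ in $t$, and the computation
\[
0\leq s\bigl((d/dt)|_1-(d/dt)|_0\bigr)I_\mu(u_t^s)\leq\bigl((d/dt)|_1-(d/dt)|_0\bigr)\M_{V,s}(u_t^s)\leq O(s^2),
\]
Proposition 4.1 together with the uniform $L^\infty$-bound of Berman--Demailly yields $d(\omega_{u_0'},\omega_{u_1'})^2\leq C''s$. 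Letting $s\to 0$ and invoking Chen's theorem gives $\omega_{u_0'}=\omega_{u_1'}$, and unwinding the two automorphisms used to produce $u_0'$ and $u_1'$ from $\omega_0$ and $\omega_1$ produces the element $g\in\mbox{Aut}_0(X)$ with $\omega_0=g^*\omega_1$.
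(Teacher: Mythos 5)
Your proposal is correct and follows essentially the same route as the paper: reduce to a common extremal field $V$ via Proposition 4.14, pass to $\M_V=\M+\e_V$ on $\h_V$, minimize $\F_\mu$ over the $\mbox{Aut}_0(X,V)$-orbit to arrange the annihilation condition, and then run the perturbation/convexity argument of Theorem 4.7 unchanged. The only (harmless) divergence is at the solvability step: you invoke a $T$-invariant analogue of Proposition 4.3, whereas the paper avoids setting up equivariant Fredholm theory by averaging test functions over the torus $T$ to show that $\mu-\omega_0^n$ annihilates \emph{all} real functions with holomorphic complex gradient, applying the original Proposition 4.3, and then averaging the resulting $v_0$ over $T$ to make it $\Im V$-invariant.
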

Following \cite{gu} and \cite{si} we   modify the Mabuchi funcional to obtain another functional which has our extremal metrics as critical points. By  Proposition 4.13  we may assume  that the vector fields associated to $\omega_0$ and $\omega_1$ are the same field $V$. Then both $\omega_0$ and $\omega_1$ are invariant under $\Im V$ and  hence invariant under  the closure of the one parameter subgroup of Aut$_0(X)$ generated by $\Im V$, which we call $T$.  Let $\M_V:=\M+\e_V$, where $\e_V$ is the previously introduced energy functional associated to the extremal field $V$. $\M_V$ is defined on the subspace $\h_V$ of $\h$ of K\"ahler potentials invariant under $\Im V$. Then booth $\omega_0$ and $\omega_1$ are critical points of $\M_V$ on $\h_V$. We now let $\mu$ be a smooth $T$-invariant volume form, normalized as before and consider, following the proof of Theorem 4.7,  the functionals
$$
\M_V +s\F_\mu,
$$
where $s$ is a small positive number and let $F_V(u,s):= d(\M_V +s\F_\mu)|_u$. We shall prove that if $\omega_0=\omega+i\ddbar u_0$ then there exists a smooth function $v_0$ such that $F_V(u_0+sv_0,s)= O(s^2)$ and as before this amounts to solving the equation
$$
D_{v_0}d\M_V|{u_0}=-d\F_\mu|_{\omega_0}=-(\mu-\omega_0^n).
$$ 
Moreover, we look for $v_0$ such that $\Im V(v_0)=0$. We proceed as in the proof of Theorem 4.7, but this time we first replace $\omega_0$ by $g^*\omega_0$ where $g\in$ Aut$_0(X,V)$     is chosen to give the minimum of $\F_\mu$ on the orbit $\mbox{Aut}_0(X,V) \omega_0$, i e we use the subgroup Aut$_0(X,V)$  instead of the full group Aut$_0(X)$. Notice that $\M_V$  is invariant under the action of Aut$_0(X,V)$  by the same reason as before: It is linear along the flow of vector fields that commute with $V$ and is bounded from below on $\h_V$ (this can be proved in the same way that we proved Corollary 1.2). Therefore $g^*\omega_0=:\omega_0'$ is still critical for $\M_V$.

Then $d\F_\mu|_{\omega_0'}$ annihilates all real valued  functions whose complex gradients lie in the Lie algebra of Aut$_0(X,V)$ , cf the proof of Proposition 4.6. By Proposition 4.11 it follows that $d\F_\mu$ annihilates all functions in $\mathcal{C}^\infty_V$ with holomorphic complex gradients. But, if $h$ is a general real valued function with holomorphic complex gradient, and Av$_{T}(h)$ denotes the average of $h$ over $T$, then (since $\mu -\omega_0^n$ is $T$-invariant)
$$
\int_X h(\mu-\omega_0^n)=\int_X \mbox{Av}_{T} (h)(\mu-\omega_0^n)=0,
$$
since Av$_{T}(h)$ is annihilated by $\Im V$. Hence $\mu -\omega_0^n$ annihilates all real functions with holomorphic complex gradient, which by Proposition 4.3 means that we can solve 
$$
-D_{v_0}d\M|_{u_0}=\mu-\omega_0^n.
$$
Replacing $v_0$ by its average over $T$ we can also find a solution that is $T$-invariant, i e annihilated by $\Im V$. Finally, we recall that by our formula for the second derivative of $\e_V$, the Hessian of $\e_V$ restricted to $\mathcal{C}^\infty_V$ vanishes, so we have also solved
$$
-D_{v_0}d\M_V|_{u_0}=\mu-\omega_0^n.
$$ 
The proof is then completed in the same way as before: After applying an element of Aut$_0(X,V)$  to $\omega_1$ we may  solve in the same way
$$
-D_{v_1}d\M_V|_{u_1}=\mu-\omega_1^n.
$$ 
We then let $u_0^s=u_0+sv_0$ and $u_1^s=u_1+sv_1$ and connect with a geodesic $u^s_t$. By uniqueness the geodesics lie in $\h_V$ so $\e_V(u^s_t)$ is linear in $t$. It then follows again from Proposition 4.1 that the square of the distance between $\omega_{u_0^s}$ and $\omega_{u_1^s}$ is of order $s$, and hence
$\omega_0=\omega_1$.

\end{document}